\tikzset{
    labl/.style={anchor=south, rotate=90, inner sep=.5mm}
}
\definecolor{qqqqff}{rgb}{0,0,1}
      \string\usetikzlibrary{decorations.markings} to use arrows with markings}{}}{}%
\newtheorem{theorem}{Theorem}[section]
\newtheorem{proposition}[theorem]{Proposition}
\newtheorem{corollary}[theorem]{Corollary}
\newtheorem{lemma}[theorem]{Lemma}
\newtheorem{remark}[theorem]{Remark}
\newtheorem{notation}[theorem]{Notation}
\theoremstyle{definition}
\newtheorem{definition}[theorem]{Definition}
\newtheorem{example}[theorem]{Example}
\newtheorem{situation}[theorem]{Situation}
\numberwithin{equation}{subsection}
\def\lim{\mathop{\mathrm{lim}}\nolimits}
\def\colim{\mathop{\mathrm{colim}}\nolimits}
\def\Spec{\mathop{\mathrm{Spec}}}
\def\gl{\mathop{\mathrm{Gl}}\nolimits}
\def\pr{\mathop{\mathrm{pr}}\nolimits}
\def\Frac{\mathop{\mathrm{Q}}\nolimits}
\def\Pic{\mathop{\mathscr{P}\mathrm{ic}}\nolimits}
\def\pic{\mathop{\mathrm{Pic}}\nolimits}
\def\max{\mathop{\mathrm{max}}\nolimits}
\def\det{\mathop{\mathrm{det}}\nolimits}
\def\dim{\mathop{\mathrm{dim}}\nolimits}
\def\rk{\mathop{\mathrm{rk}}\nolimits}
\def\colim{\mathop{\mathrm{colim}}\nolimits}
\def\lim{\mathop{\mathrm{lim}}\nolimits}
\newcommand{\rB}{\mathrm{B}} 
\newcommand{\cO}{\mathcal{O}}
\newcommand{\cL}{\mathcal{L}}
\newcommand{\cM}{\mathcal{M}}
\def\Bun{\mathop{\mathscr{B}un}\nolimits}
\def\Spec{\mathop{\mathrm{Spec}}\nolimits}
\def\Bun{\mathop{\mathscr{B}un}\nolimits}
\def\gm{\mathop{{\mathbb G}_m}\nolimits}
\def\dt{\mathop{{\rm dt}}\nolimits}
\def\lim{\mathop{\mathrm{lim}}\nolimits}
\def\colim{\mathop{\mathrm{colim}}\nolimits}
\def\Spec{\mathop{\mathrm{Spec}}}
\def\wt{\mathop{\mathrm{wt}}\nolimits}
\def\Sdim{\mathop{\mathrm{Sdim}}\nolimits}
\def\Fib{\mathop{\mathrm{Fib}}\nolimits}
\def\gl{\mathop{\mathrm{Gl}}\nolimits}
\def\max{\mathop{\mathrm{max}}\nolimits}
\def\det{\mathop{\mathrm{det}}\nolimits}
\def\dim{\mathop{\mathrm{dim}}\nolimits}
\def\rk{\mathop{\mathrm{rk}}\nolimits}
\def\colim{\mathop{\mathrm{colim}}\nolimits}
\def\lim{\mathop{\mathrm{lim}}\nolimits}
\def\pr{\mathop{\mathrm{pr}}\nolimits}
\newcommand{\isom}[1][]{
	\ar[#1]
	\ar@<0.7ex>@{}[#1]|-*=0[@]{\sim}}
\def\rig{\mathrm{rig}}
\def\red{\mathrm{red}}
\title{Distinguishing Algebraic Spaces from Schemes}
\author{Andres Fernandez Herrero, Dario Wei{\ss}mann, and Xucheng Zhang}
\address{(A. Fernandez Herrero) University of Pennsylvania, Department of Mathematics,
209 South 33rd Street,
Philadelphia, PA 19104, USA,
Email: \href{mailto: andresfh@sas.upenn.edu}{andresfh@sas.upenn.edu}}
\address{(D. Wei{\ss}mann) University of Pisa, Department of Mathematics,
Largo Bruno Pontecorvo, 5 – 56127 Pisa, Italy,
Email: \href{mailto: dario.weissmann@posteo.de}{dario.weissmann@posteo.de}}
\address{(X. Zhang) Tsinghua University, Yau Mathematical Sciences Center, Beijing 100084, China, 
Email: \href{mailto: zhangxucheng@mail.tsinghua.edu.cn}{zhangxucheng@mail.tsinghua.edu.cn}}
\date{}
\begin{document}
\begin{abstract}
We introduce local invariants of algebraic spaces and stacks which measure how far they are from being a scheme. Using these invariants, we develop mostly topological criteria
to determine when the moduli space of a stack is a scheme. 
As an application we study moduli of principal bundles on a smooth projective curve.
\end{abstract}

\maketitle
\tableofcontents

\section{Introduction}

Recent progress in the construction of moduli spaces \cite{exthm}
allows for a construction without resorting to geometric invariant theory
(GIT). Via checking two valuative criteria, called $\Theta$- and $S$-completeness,
one obtains a separated good moduli space (in characteristic 0).
In contrast to the classical setting of GIT, quasi-projectivity of such a moduli space is not automatic, and indeed the moduli space may not even be a scheme.

In this paper, we provide certain local invariants which distinguish algebraic
spaces from schemes (see \Cref{subsection: schematic dimension and fibers}). 
We use these invariants to develop criteria for 
schematicness of algebraic spaces
which have a distinct topological flavor.
As a consequence, we obtain a characterization for when a separated moduli space 
of a stack is a scheme in terms of the topology of the stack 
(\Cref{prop: schematic points for adequate moduli spaces}).
The characterization holds for a fairly flexible notion of moduli spaces, which
we call \emph{weak topological moduli spaces} (see \Cref{remark: more general
contexts}). 
Our main interest here is that good \cite{gms}, adequate \cite{ams}, as well as the upcoming topological moduli spaces of Rydh \cite{rydh_tms} are weak topological moduli spaces.

The criteria are derived from the richer geometry 
of algebraic spaces compared to the one of schemes.
Roughly, we find that line bundles or intersections of Weil divisors 
are not behaving as one would expect.
An extreme phenomenon is the case when all (but finitely many) Weil divisors
intersect in the same point, 
a property we call a (local) uniform base point (\Cref{defn: uniform base point}). 
This property is closely related to admitting the point as a universal scheme 
(\Cref{defn: trivial universal scheme}), i.e., being as unschematic as possible.

\begin{theorem}[\Cref{thm: schematically trivial = local ubp for proper locally factorial}]
\label{thm: intro schematically trivial = local ubp}
    If $X$ is a locally factorial irreducible algebraic space proper 
    over an algebraically closed field $k$,
    then a $k$-point $x$ of $X$
    is a local uniform base point if and only if
    $x$ is schematically trivial, i.e., 
    every Zariski open neighborhood of $x$ has $\Spec(k)$ as
    its universal scheme.
\end{theorem}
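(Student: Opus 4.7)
The plan is to reduce the theorem to the characterization: $x$ is schematically trivial if and only if $\Gamma(U, \mathcal{O}_U) = k$ for every open neighborhood $U$ of $x$. One direction of this reduction is immediate from the universal property applied to $\mathbb{A}^1_k$. For the converse, I would cover an arbitrary target scheme $S$ by affine opens $\{V_\alpha\}$: the hypothesis on global sections forces each restriction $f^{-1}(V_\alpha) \to V_\alpha$ to factor through a $k$-point, and these constants glue via connectedness of $U$ (inherited from irreducibility of $X$).

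For the direction \emph{local uniform base point $\Rightarrow$ schematically trivial}, I would first shrink to an open neighborhood $V \ni x$ on which every prime divisor of $X$ meeting $V$ contains $x$, by removing the finitely many exceptional divisors supplied by the UBP hypothesis. Suppose $f \in \Gamma(V, \mathcal{O}_V)$ is non-constant; irreducibility of $V$ makes $f(V)$ infinite, so one can pick $c \in f(V) \setminus \{f(x)\}$. By local factoriality, $Z(f - c) \subseteq V$ is a non-empty effective Cartier divisor, hence a sum of prime divisors of $V$, each of which contains $x$ by construction. Thus $x \in Z(f-c)$, forcing $c = f(x)$, a contradiction. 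Hence $\Gamma(V, \mathcal{O}_V) = k$, and for any open $U' \ni x$ the intersection $U' \cap V$ is dense in $U'$, so any regular function on $U'$ restricts to a constant on $U' \cap V$ and is therefore globally constant by continuity; hence $\Gamma(U', \mathcal{O}_{U'}) = k$.

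For the harder direction \emph{schematically trivial $\Rightarrow$ local UBP}, I would argue by contradiction. First, if $\mathrm{Alb}(X) \neq 0$ the Albanese morphism $X \to \mathrm{Alb}(X)$ has dense image in a projective scheme, so its restriction to any open $U \ni x$ (dense in $X$) is non-constant, contradicting schematic triviality; hence $\mathrm{Alb}(X) = 0$, $\pic^0_{\red}(X) = 0$, and over the algebraically closed $k$ we obtain $\pic(X)(k) \cong \ns(X)$, a finitely generated abelian group of some finite rank $r$. Now suppose $P_1, \ldots, P_n$ are pairwise distinct prime divisors of $X$ avoiding $x$ with $n > r$. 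Then $U_n := X \setminus \bigcup_i P_i$ is a neighborhood of $x$, and local factoriality gives
\[
\Gamma(U_n, \mathcal{O}_{U_n}) \;=\; \bigcup_{(m_i) \in \mathbb{Z}_{\geq 0}^n} \Gamma\bigl(X, \mathcal{O}(\textstyle\sum_i m_i P_i)\bigr).
\]
Schematic triviality forces this to equal $k$, so $h^0\bigl(\mathcal{O}(\sum_i m_i P_i)\bigr) = 1$ for every tuple $(m_i)$. Thus each effective divisor $\sum m_i P_i$ is alone in its linear equivalence class, and the monoid map $\mathbb{Z}_{\geq 0}^n \to \pic(X)(k)$ sending $(m_i) \mapsto [\mathcal{O}(\sum m_i P_i)]$ is injective; equivalently the classes $[\mathcal{O}(P_1)], \ldots, [\mathcal{O}(P_n)]$ are $\mathbb{Z}$-linearly independent in $\pic(X)(k)$, contradicting $n > r$.

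The main obstacle is the bridge in the second direction, from the ring-theoretic statement $\Gamma(U, \mathcal{O}_U) = k$ to the geometric conclusion that $\pic(X)(k)$ has finite $\mathbb{Z}$-rank. This relies on both the existence and universality of the Albanese morphism for proper irreducible algebraic spaces over $k$ and the finite generation of the Néron–Severi group in this algebraic-space generality — standard for proper schemes, but warranting care to invoke in the broader setting.
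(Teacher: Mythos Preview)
Your proof is correct and follows essentially the same strategy as the paper: the easy direction matches Lemma~2.9, and the hard direction matches Theorems~3.1--3.2, where the paper uses the Poincar\'e bundle on $X \times \underline{\pic}^0(X)_{\red}$ to kill $\pic^0$ (dual to your Albanese argument) and then extracts a multiplicative relation among the $\mathcal{O}(P_i)$ rather than a rank bound, but the content is the same. Two minor points: in the easy direction your passage from $\Gamma(V,\mathcal{O}_V)=k$ to $\Gamma(U',\mathcal{O}_{U'})=k$ tacitly uses that $x$ remains a UBP of $U'\cap V$ (so the same $Z(f-c)$ argument applies there), and your worry about Albanese/N\'eron--Severi for proper algebraic spaces is exactly what the paper handles by citing Brochard.
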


By definition being a uniform base point is a closed condition. 
Surprisingly, the same holds for the local version in the setting of 
\Cref{thm: intro schematically trivial = local ubp} 
as we can show that schematic triviality is a closed condition, 
see \Cref{corollary: locus of schematically trivial points is closed}.
We emphasize that being a local uniform base point
is a purely topological property, whereas schematic triviality is a priori a geometric
property.

There are of course many non-schematic algebraic spaces which admit non-trivial
morphisms to schemes. However, the general criterion looks quite similar.
Instead of intersecting all Weil divisors 
of an algebraic stack $\mathscr{X}$ locally of finite type
over an algebraically closed field $k$,
we consider for a $k$-point $x$ of $\mathscr{X}$
the intersection of all Weil divisors passing through $x$ that are Zariski-locally
around $x$ cut out by a section, i.e., are effective Cartier and principal
around $x$ (cf. \Cref{defn: local weil divisors}).
This intersection is a closed substack of $\mathscr{X}$ containing $x$.
Taking the union of all irreducible components containing $x$ with the appropriate
substack structure we obtain the \emph{schematic fiber} $\Fib_x(\mathscr{X})$ of $x$.
The name is derived from an alternative description of $\Fib_x(\mathscr{X})$
as the smallest closed substack of the form $\overline{f^{-1}(f(x))}$
for a morphism $f:\mathscr{U}\to S$ of a Zariski open neighborhood
$\mathscr{U}$ of $x$ to a scheme $S$ of finite type (see \Cref{def: schematic fiber}). 
The equivalence of these two
descriptions is shown in \Cref{prop:schematic fiber as base locus}.

For an integral separated algebraic space locally of finite type,
the schematic fiber at $x$ measures the schematicness at $x$. 
Indeed, in \Cref{prop: characterization of algebraic spaces} we show that $\Fib_x(X)$ is of dimension $0$ if and only if $x$ is schematic in the sense of \Cref{defn: schematic point}.
This allows us to distinguish an algebraic space from a scheme:
local separatedness, local triviality of line bundles, or intersection of effective Cartier
divisors may be responsible for the unschematicness of a point. 
Let us state this formally:

\begin{theorem}[\Cref{thm: reason why algebraic space at a point local}]
\label{thm: distinguishing algebraic spaces via abc}
    Let $X$ be an integral quasi-separated algebraic space locally of finite
    type over an algebraically closed field $k$ and let $x \in X(k)$ be a $k$-point. 
    Then $x$ is not schematic if and only if at least one of the following holds:
    \begin{enumerate}[(A)]
        \item There is an open neighborhood $U \subseteq X$ of $x$ and an
        effective Cartier divisor $D \subset U$ such that the corresponding line
        bundle $\cO_U(D)$ is not Zariski-locally trivializable around $x$.
        \item There is a positive dimensional closed subspace $Z \subseteq X$
        containing $x$ such that for every open neighborhood $U \subseteq X$ of
        $x$ and every effective Cartier divisor $D \subset U$ containing $x$, we
        have $Z \cap U \subseteq D$. 
        \item $X$ is not Zariski-locally separated at $x$, i.e., there is no
        Zariski open neighborhood of $x$ that is separated.
    \end{enumerate}
\end{theorem}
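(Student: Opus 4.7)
The plan is to prove the equivalence by leveraging the earlier characterization that $\dim \Fib_x(X) = 0$ if and only if $x$ is schematic (from the proposition on characterizing algebraic spaces via the schematic fiber).

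For the direction (A), (B), or (C) $\Rightarrow$ $x$ is not schematic, I would argue each case separately. If $x$ were schematic, it would admit an affine scheme neighborhood, which is separated, immediately ruling out (C). Line bundles on schemes are by definition Zariski-locally trivial, so if $V \ni x$ is a scheme neighborhood, then $\cO_U(D)|_V$ is trivial on some smaller Zariski open around $x$, ruling out (A). Finally, if (B) holds with positive-dimensional $Z$, then $Z$ is contained in every principal effective Cartier divisor through $x$, and since these are exactly the local Weil divisors defining $\Fib_x(X)$, we obtain $Z \subseteq \Fib_x(X)$ in a neighborhood of $x$; this forces $\dim \Fib_x(X) \geq 1$, so $x$ is not schematic by the characterization.

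For the converse, suppose $x$ is not schematic. If $X$ is not Zariski-locally separated at $x$, then (C) holds and we are done. Otherwise, after replacing $X$ by a separated Zariski open neighborhood of $x$, the characterization forces $\Fib_x(X)$ to have positive dimension, and I pick a positive-dimensional irreducible component $Z \subseteq \Fib_x(X)$ containing $x$. Either (B) holds directly, or there exist a Zariski open $U \ni x$ and an effective Cartier divisor $D \subset U$ through $x$ with $Z \cap U \not\subseteq D$. In the latter case, if $\cO_U(D)$ were Zariski-locally trivializable on some $U' \ni x$, then $D \cap U'$ would be principal on $U'$ and hence a local Weil divisor at $x$; but then $\Fib_x(X) \cap U' \subseteq D \cap U'$ and in particular $Z \cap U' \subseteq D$, contradicting the choice of $D$. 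Therefore $\cO_U(D)$ is not Zariski-locally trivializable around $x$, which is condition (A).

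The main obstacle I anticipate is careful Zariski localization around $x$: both $\Fib_x(X)$ and the conditions (A), (B) are phrased in terms of shrinking open neighborhoods, and the case analysis requires ensuring that $Z \cap U$ remains positive-dimensional after shrinking and that the notion of \emph{local Weil divisor} passes cleanly from $U$ to any smaller $U'$. The first point follows from the irreducibility of $Z$ together with $x \in Z \cap U$; the second amounts to unwinding definitions, but one must be vigilant not to conflate the three related notions \emph{principal on $U$}, \emph{$\cO_U(D)$ trivializable near $x$}, and \emph{local Weil divisor}, all of which coincide only after passing to a sufficiently small Zariski neighborhood of $x$.
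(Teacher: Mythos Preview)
Your proposal is correct and follows essentially the same route as the paper. The paper's proof is terse: it observes that if $x$ is schematic then none of (A), (B), (C) can hold, and for the converse it reduces to the separated case (else (C) holds) and then runs the contradiction argument from the global version, using $Z = \Fib_x(X)$ to witness the failure of (B) and then the failure of (A) to produce a function on a neighborhood of $x$ that is non-constant on $\Fib_x(X)$, contradicting the definition of the schematic fiber directly.

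The only noteworthy difference is that you invoke the base-locus description $\Fib_x(X) = \mathrm{Bs}_x(X)$ to deduce $\Fib_x(X) \cap U' \subseteq D$, whereas the paper argues straight from the defining minimality of $\Fib_x(X)$ via the induced map $U' \to \mathbb{A}^1$; both reach the same contradiction. Your anticipated obstacle is real but handled correctly: the density of $Z \cap U'$ in $Z \cap U$ (by irreducibility of $Z$ and $x \in Z \cap U'$) together with $D$ being closed in $U$ is exactly what promotes $Z \cap U' \subseteq D$ to $Z \cap U \subseteq D$. One small point worth tightening: your argument that (B) $\Rightarrow$ $x$ not schematic via $\dim \Fib_x(X) \geq 1$ appeals to the characterization that requires separatedness, so it should be phrased as a contradiction (if $x$ were schematic, pass to an affine neighborhood first); alternatively one can bypass $\Fib_x$ entirely here by separating $x$ from another point of $Z$ with a function on an affine chart.
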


Translating our schematic fiber criterion to an algebraic stack admitting a 
separated moduli space, we obtain the criterion for checking whether a weak topological
moduli space 
(in the sense of \Cref{remark: more general contexts}) is schematic.

\begin{theorem}[\Cref{prop: schematic points for adequate moduli spaces}]
\label{thm: intro criterion schematic moduli}
    Let $\mathscr{X}$ be an integral quasi-separated algebraic stack locally
    of finite type over an algebraically closed field $k$ admitting
    a separated weak topological moduli space $\mathscr{X}\to X$.
    Then a closed $k$-point $x$ of $\mathscr{X}$
    maps to a schematic point of $X$
    if and only if $x$ is the unique closed point in its schematic fiber $\Fib_x(\mathscr{X})$.
\end{theorem}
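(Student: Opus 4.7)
The plan is to reduce the statement to the algebraic-space criterion of \Cref{prop: characterization of algebraic spaces} applied to $X$ at the image $\pi(x)$ of $x$ under the moduli space morphism $\pi\colon\mathscr{X}\to X$. That criterion says that $\pi(x)$ is schematic in $X$ if and only if $\Fib_{\pi(x)}(X)$ is $0$-dimensional; since $X$ is separated and $\pi(x)$ is a closed point (being the image of a closed point of $\mathscr{X}$ under the moduli space map), this is equivalent to $\Fib_{\pi(x)}(X)=\{\pi(x)\}$ set-theoretically. It therefore suffices to prove
\[
\Fib_{\pi(x)}(X)=\{\pi(x)\}\ \Longleftrightarrow\ x\text{ is the unique closed point of }\Fib_x(\mathscr{X}).
\]

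The key intermediate claim is the set-theoretic identity $\pi\bigl(\Fib_x(\mathscr{X})\bigr)=\Fib_{\pi(x)}(X)$. For this I would use the characterization of $\Fib$ as the smallest closure of a fiber of a morphism to a finite-type scheme (\Cref{prop:schematic fiber as base locus}). On the one hand, any morphism $V\to S$ from a Zariski open $V\ni\pi(x)$ of $X$ to a finite-type scheme $S$ pulls back to $\pi^{-1}(V)\to S$, which yields $\Fib_x(\mathscr{X})\subseteq\pi^{-1}(\Fib_{\pi(x)}(X))$ and hence the inclusion $\pi(\Fib_x(\mathscr{X}))\subseteq\Fib_{\pi(x)}(X)$. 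On the other hand, every Zariski open neighborhood of $x$ in $\mathscr{X}$ contains a saturated one (a topological feature of weak topological moduli spaces), and every morphism from a saturated open to a finite-type scheme factors through its image in $X$ by the universal property of $\pi$; thus the minimization computing $\Fib_x(\mathscr{X})$ is realized by pullbacks from $X$, giving the reverse inclusion.

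To conclude, I would use that a weak topological moduli space induces a bijection between closed points of $\mathscr{X}$ and closed points of $X$ and restricts to the analogous bijection on preimages of locally closed subsets of $X$. Therefore a closed substack of $\mathscr{X}$ containing $x$ has $x$ as its unique closed point if and only if its image in $X$ is $\{\pi(x)\}$; applied to $\Fib_x(\mathscr{X})$ together with the identity above, this is exactly the equivalence displayed at the end of the first paragraph. The principal obstacle is the middle step: one must show that the schematic fiber at $x$ is computed equally well using only saturated Zariski opens and only morphisms that factor through $X$, even though its definition allows all Zariski opens and all morphisms to finite-type schemes. Carrying out this descent, and matching the irreducible components through $x$ in $\mathscr{X}$ with those through $\pi(x)$ in $X$, is the technical heart of the proof; everything else is formal once this comparison is in place.
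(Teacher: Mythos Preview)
Your proposal is correct and follows essentially the same route as the paper. The paper isolates your ``key intermediate claim'' as a separate lemma (\Cref{lemma: schematic fiber for stack with adequate moduli space}) and in fact proves the slightly stronger scheme-theoretic equality $\Fib_x(\mathscr{X}) = \pi^{-1}(\Fib_{\pi(x)}(X))$; the saturated-open argument you identify as the technical heart is exactly what is carried out there, and the remaining implications are deduced from \Cref{prop: characterization of algebraic spaces} just as you suggest. One small correction: the description of $\Fib$ as the minimal closure of a fiber is its \emph{definition} (\Cref{def: schematic fiber}), not \Cref{prop:schematic fiber as base locus}.
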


As an application of this criterion, we study 
the stack $\Bun_G$ of principal $G$-bundles on a smooth projective curve
of genus at least $2$ for a connected reductive group $G$.
The following generalizes \cite[Thm. A]{weissmann-zhang} in two directions:
it includes the more general notion of a weak topological moduli space, 
as well as open substacks in the semistable locus.

\begin{theorem}[\Cref{thm: semistable and schematic}, \Cref{thm: open substacks of semistable G-bundles}]
\label{thm: intro open substacks of principal bundles}
    Let $C$ be a smooth projective connected curve  of genus $g_C \geq 2$ over an algebraically closed
    field. Let $G$ be a connected reductive group.
    Consider an open substack $\mathscr{U}\subseteq \Bun_G^d$ of a connected
    component of the stack of principal bundles admitting
    a separated and schematic weak topological moduli space $\mathscr{U} \to U$.
    
    Then $\mathscr{U}\subseteq \Bun_G^{d,ss}$ is contained in the semistable
    locus and the induced morphism $U\to M^{d,ss}_G$ to the moduli space
    of semistable principal bundles is an open immersion.
\end{theorem}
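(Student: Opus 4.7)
My plan is to split the theorem into its two conclusions and to handle each via \Cref{thm: intro criterion schematic moduli}.

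For the containment $\mathscr{U} \subseteq \Bun_G^{d,ss}$ I would argue by contradiction: assume $\mathscr{U}$ meets the unstable locus and aim to exhibit a closed $k$-point $x \in \mathscr{U}$ whose schematic fiber $\Fib_x(\mathscr{U})$ contains a second closed point, contradicting the schematicness of $U$. The canonical Harder--Narasimhan filtration of an unstable bundle provides a non-trivial $\Theta$-filtration inside $\Bun_G^d$, and separatedness of the moduli space $U$ propagates this to a non-trivial Levi reduction at a suitable closed $k$-point $x \in \mathscr{U}$. One then varies this reduction using the moduli of bundles on $C$ for the proper Levi subgroup $L \subsetneq G$ (which is positive-dimensional because $g_C \geq 2$) to produce a positive-dimensional family of pairwise non-isomorphic bundles in $\mathscr{U}$, all mapping to the same image in $U$ and hence all sitting inside $\Fib_x(\mathscr{U})$. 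Within this family one must then isolate a closed point of $\mathscr{U}$ distinct from $x$ to obtain the sought contradiction.

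For the open immersion $U \hookrightarrow M_G^{d,ss}$, once containment in the semistable locus is known, my plan is to exploit the universal property of the adequate moduli space $\Bun_G^{d,ss} \to M_G^{d,ss}$ to obtain a canonical morphism $U \to M_G^{d,ss}$, and to show it is an open immersion by establishing $S$-saturation of $\mathscr{U}$. The argument is that the unique closed polystable representative of each $S$-equivalence class meeting $\mathscr{U}$ is a specialization of every member of the class via an explicit $\Theta$-filtration, and the separated weak topological moduli space $U$ forces this representative to lie in $\mathscr{U}$; openness of $\mathscr{U}$ then carries the full $S$-equivalence class into $\mathscr{U}$. Standard descent properties of weak topological moduli spaces then identify $U$ with an open subscheme of $M_G^{d,ss}$, and injectivity together with this saturation give the open immersion.

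The main obstacle is the first step: producing a genuinely second closed point inside $\Fib_x(\mathscr{U})$ rather than merely a positive-dimensional fiber. The deformation theory of Levi reductions, combined with the curve input $g_C \geq 2$, is the technical heart of the argument, as it is what guarantees enough modular freedom to exhibit non-isomorphic bundles in $\mathscr{U}$ that are contracted to the same point of $U$ while being separated by no Zariski-local Cartier divisor near $x$.
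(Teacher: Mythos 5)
Your outline has a structural flaw in the first step that cannot be repaired along the lines you propose. You aim to produce a second closed point of $\Fib_x(\mathscr{U})$ by exhibiting a positive-dimensional family of pairwise non-isomorphic unstable bundles ``all mapping to the same image in $U$.'' But any collection of points of $\mathscr{U}$ mapping to a single point of $U$ lies inside one fiber of $\eta\colon\mathscr{U}\to U$, and by condition (iii) of \Cref{remark: more general contexts} such a fiber has a \emph{unique} closed point. So no matter how much modular freedom the Levi reductions provide, a family contained in $\eta^{-1}(\eta(x))$ can never contain a second closed point; the obstacle you flag at the end is not a technical difficulty but an impossibility of the chosen route. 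The contradiction has to come from showing that $\Fib_x(\mathscr{U})$ is strictly \emph{larger} than $\eta^{-1}(\eta(x))$, and for that one needs the base-locus description of the schematic fiber (\Cref{prop:schematic fiber as base locus}) together with an actual computation of divisors: the paper reduces via a $\Bun_{\mu}$-torsor to the twisted simply connected factors $\Bun_{\hat G_i,=L}$, where $\pic\cong\mathbb{Z}$ is generated by the determinant of cohomology and every section of a positive power vanishes on the unstable locus, so an unstable bundle is a uniform base point and its schematic fiber is the whole stack (\Cref{lemma:app-Bun}); the second closed point (in fact infinitely many) is then supplied by \Cref{lemma: infinitely many closed points}, whose proof uses the Harder--Narasimhan stratification and non-emptiness of stable loci in genus $\geq 2$. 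None of this Picard-theoretic input appears in your plan, and it is the technical heart of the argument.

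The second step has a similar gap. You assert that separatedness of $U$ forces the closed polystable representative of an $S$-equivalence class meeting $\mathscr{U}$ to lie in $\mathscr{U}$, and that openness then ``carries the full $S$-equivalence class into $\mathscr{U}$.'' Openness is stable under generization, not specialization, so containing a point gives no control over its polystable degeneration; moreover the fibers of an arbitrary weak topological moduli space $\eta\colon\mathscr{U}\to U$ need not be full $S$-equivalence classes, so the identification of the unique closed point of $\eta^{-1}(\eta(y))$ with the polystable representative is unjustified. Indeed, the conclusion you are after ($S$-saturation, equivalently $f$ being an isomorphism onto its image) is exactly what the paper \emph{characterizes} in terms of schematicity of $U$ in \Cref{lemma: open substacks of adequate moduli spaces}: the argument compares $\mathrm{Weil}_{\tilde u}$ in $\mathscr{U}$ with $\mathrm{Weil}_{\tilde x}$ in $\Bun_G^{d,ss}$, which requires proving that every effective Cartier divisor on $\Bun_G^{d,ss}$ is, up to a power, pulled back from $M_G^{d,ss}$ (again a Picard group computation via \cite{biswas-hoffmann}), and then applies Zariski's main theorem to the birational morphism $f\colon U\to M_G^{d,ss}$. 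Your proposal identifies the correct high-level criterion (\Cref{prop: schematic points for adequate moduli spaces}) but is missing the divisor-theoretic machinery that makes both halves of the proof work.
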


As an immediate corollary we obtain that all weak topological moduli spaces of
principal $G$-bundles on $C$ (not necessarily schemes or separated)
are birational and in particular of the same dimension, see
\Cref{cor: moduli of principal bundles are birational}. 

\subsection*{Outline of the paper} 
In \Cref{section: the most unschematic points} we introduce the notions
of (local) uniform base points and schematically trivial points
together with some preliminary observations. 
We also provide several examples of algebraic spaces with such points.

In \Cref{section: local ubp vs schematically trivial} we prove \Cref{thm: intro schematically trivial = local ubp}
for spaces with mild singularities, including locally factorial spaces.

In the main \Cref{section: local geometry via maps to schemes} we introduce two measures of unschematicness
of an algebraic space or stack at a point: the schematic fiber and the schematic dimension.
Both of these provide a canonical stratification of a given algebraic stack. 
To compute the schematic fiber at $x$ we show that it coincides with the
intersection of all Weil divisors at $x$ which are Zariski-locally around $x$
effective Cartier and principal. It is also useful to know that schematic fiber 
commutes with products when equipped with the reduced closed substack structure.
We then show that the schematic fiber measures (un)schematicness
together with the translation to moduli spaces 
(\Cref{thm: intro criterion schematic moduli}).

In the last \Cref{section: applications to the moduli of bundles} we apply the results on schematic fibers to
the stack of principal bundles on a smooth projective curve of genus at least $2$
where we show \Cref{thm: intro open substacks of principal bundles}.

\subsection*{Some notation and conventions}
We work over an algebraically closed ground field $k$. Unless otherwise stated,
all algebraic stacks and algebraic spaces are implicitly assumed to be
quasi-separated and locally of finite type over $k$ in all statements and proofs
of this article.

We recall the definition of a schematic point as one of our main properties of interest.
\begin{definition}[Schematic point] \label{defn: schematic point}
    Given an algebraic stack $\mathscr{X}$ and a geometric point $x \in
    |\mathscr{X}|$, we say that $x$ is \emph{schematic} if there is an open substack
    $\mathscr{U} \subseteq \mathscr{X}$ containing $x$ such that $\mathscr{U}$ is a scheme.
\end{definition}

We work frequently with
prime divisors on an algebraic stack. On a smooth stack a prime divisor is cut
out by a line bundle, i.e., the stack is locally factorial. 
While this property allows for stronger statements, it is somewhat subtle
on a stack and we recall the definition.
\begin{definition}[Local factoriality] \label{defn: local factoriality}
    An integral algebraic stack $\mathscr{X}$ is called \emph{locally factorial}
    if every effective reduced Weil divisor $\mathscr{Z} \subset \mathscr{X}$ is
    a Cartier divisor (i.e., its ideal sheaf $\mathcal{I}_{\mathscr{Z}} \subset
    \mathcal{O}_\mathscr{X}$ is a line bundle). 
\end{definition}

The subtlety of this definition arises from the fact that local factoriality is not
smooth (or \'etale) local. If we have an atlas $U \to \mathscr{X}$ such that $U$
is locally factorial, then it follows $\mathscr{X}$ is locally factorial.
However, the local factoriality of $\mathscr{X}$ does not necessarily imply that
any given atlas $U \to \mathscr{X}$ is locally factorial.
\subsection*{Acknowledgments} 
We thank Jarod Alper, Tom\'as G\'omez, Jochen Heinloth, Svetlana
Makarova, David Rydh, and Angelo Vistoli for useful discussions.
We thank George Cooper for comments on a previous version of the manuscript.
We gratefully
acknowledge support from the Simons Center for Geometry and Physics, Stony Brook
University, at which some of the research for this paper was performed.
Dario Weißmann was funded by PRIN 2022
``Geometry of Algebraic Structures: Moduli, Invariants, Deformations''.

\section{The most unschematic points} \label{section: the most unschematic points}
In this section, we introduce the notions of (local) uniform base points
as the intersection of all (but finitely many) codimension 1 closed substacks.
While this is a purely topological property, it is 
closely related to the geometric notion of schematically trivial points,
see \Cref{definition: schematically trivial}. 
These are the most extreme cases of the difference of the topology and 
geometry of algebraic spaces vs. schemes.

\subsection{Uniform base points} 
\begin{definition}[Uniform base locus and uniform base points] \label{defn: uniform base point}
    Let $\mathscr{X}$ be a quasi-separated irreducible algebraic stack locally of
    finite type over $k$. The \emph{uniform base locus} of $\mathscr{X}$ is the
    closed subset $|\mathscr{X}|_{bs} \subseteq |\mathscr{X}|$ obtained by
    intersecting all non-empty codimension 1 closed substacks of $\mathscr{X}$.
    
    A geometric point $x\in |\mathscr{X}|$ is a \emph{uniform base point} if $x \in |\mathscr{X}|_{bs}$. 
    We say that $x \in |\mathscr{X}|$ is a \emph{local} uniform base point
    if there exists an open substack $\mathscr{U} \subseteq \mathscr{X}$ such that $x \in |\mathscr{U}|_{bs}$. 
\end{definition}

\begin{example} \label{example: uniform base-points} \quad 
    \begin{enumerate}[(1)]
        \item An irreducible scheme of finite type over $k$ does not have any
        uniform base points, unless it has dimension $0$.
        \item Let $\mathscr{X}$ be an irreducible algebraic stack with a uniform
        base point $x \in |\mathscr{X}|$. Suppose that there is an open substack
        $\mathscr{U} \subsetneq \mathscr{X}$ containing $x$. Let $\mathscr{Z}$
        be the closed complement of $\mathscr{U}$ in $\mathscr{X}$ equipped with
        its reduced substack structure. Let $\mathscr{X}' :=
        \mathrm{Bl}_{\mathscr{Z}}(\mathscr{X})$ denote the blowup of
        $\mathscr{X}$ at $\mathscr{Z}$. Then, the point $x \in |\mathscr{X}'|$
        is no longer a uniform base point, but it remains a local uniform base
        point.
    \end{enumerate}
\end{example}

\begin{example}
\label{example-vistoli}
    We thank Angelo Vistoli for bringing our attention to the following example
    of a normal proper algebraic space with uniform base points, which based on
    an example of Hironaka.
    
    Let $E$ be a smooth cubic curve in $\mathbb{P}^2$.
    Consider the blowup $\varphi: X\to \mathbb{P}^2$ 
    of $\mathbb{P}^2$ at 10 distinct points $e_1,\dots,e_{10}\in E(k)$. 
    Then the strict transform $\tilde E$ of $E$ in $X$
    has self-intersection $-1$ and is thus contractible by \cite[Cor. 6.12 (a)]{art2}.
    Denote by $\pi: X \to Y$ the contraction of $\tilde E$, and set $y_0 := \pi(\tilde{E}) \in Y(k)$. 
    By construction $Y$  is proper and normal, and 
    only exists as an
    algebraic space a priori. 
    We claim that $y_0$ is a uniform base point of $Y$ if the points $e_1,\dots,e_{10}$ are
    in general position. In this case $Y$ is not a scheme and $y_0$ is its unique uniform base point.

    Assume there was a prime divisor $\tilde C \subset Y$ not containing $y_0$.
    Then $\tilde C$ is an integral curve and is contained in the scheme 
    $Y\setminus \{y_0\} \cong X\setminus \tilde{E}$. 
    We may also view $\tilde C$ as a closed subscheme of $X$, and consider the scheme theoretic image $C := \varphi(\tilde{C})$ of $\tilde C$ under 
    $\varphi: X\to \mathbb{P}^2$. Note that $C$ is still an integral curve in $\mathbb{P}^2$, say of degree $d \geq 1$. 
    Then we have $C\cdot E=3d$. By construction
    $C$ and $E$ can only intersect at the points $e_1,\dots, e_{10}$.
    If $C$ and $E$ intersect non-transversally at some $e_i$, then $\tilde C$
    and $\tilde E$ intersect in $X$, which is impossible.
    Thus, $C$ and $E$ intersect transversally and this implies that $d\leq 3$.
    Choosing the points $e_1,\dots, e_{10}$ such that no $3$ lie on a line, no $6$ lie on a quadric, and no $9$ lie on a cubic
    (other than $E$) we obtain a contradiction.
\end{example}

\begin{example}[{\cite[Cor. 6]{kollar-non-quasiprojectic-moduli}}]
\label{example-kollar}
There are examples of smooth separated algebraic spaces with positive dimensional uniform base locus. Indeed, fix an integer $d \gg 0$. Let $U_d \subseteq |\mathcal{O}_{\mathbb{P}^2}(d)|=\mathbb{P}^N$ 
be the open subscheme consisting of curves $C$ such that
\begin{enumerate}
\item 
$C$ is integral and the genus of its normalization is $>\frac{1}{2}\binom{d-1}{2}$.
\item
$C$ is not fixed by any of the automorphisms of $\mathbb{P}^2$.
\end{enumerate}
Consider the canonical action of $\mathrm{PGl}_3=\mathrm{Aut}(\mathbb{P}^2)$ on $|\mathcal{O}_{\mathbb{P}^2}(d)|=\mathbb{P}^N$. The open subscheme $U_d \subseteq \mathbb{P}^N$ is $\mathrm{PGl}_3$-invariant, and the quotient $U_d/\mathrm{PGl}_3$ is a smooth separated non-schematic algebraic space by \cite[Cor. 6]{kollar-non-quasiprojectic-moduli}. We claim that the algebraic space $U_d/\mathrm{PGl}_3$ has positive dimensional uniform base locus.

Note that, up to positive powers, $\mathcal{O}_{\mathbb{P}^N}(1)$ is the only line bundle on $\mathbb{P}^N$ with non-constant global sections.
Also note that it admits a unique $\mathrm{PGl}_3$-linearization as
$\mathrm{PGl}_3$ has only the trivial morphism to $\mathbb{G}_m$, so there is
only one GIT-stability on $\mathbb{P}^N$ with respect to the
$\mathrm{PGl}_3$-action. Denote by $\mathbb{P}^{N,ss} \subseteq \mathbb{P}^N$ the
open subscheme of semistable points.

Fix an integer $d/\sqrt{2}>m>2d/3$.
Let $Z_d(m) \subseteq U_d$ be the smallest $\mathrm{PGl}_3$-invariant closed subscheme containing the linear subspace of curves with
multiplicity $\geq m$ at $[0:0:1] \in \mathbb{P}^2$. 
Then $Z_d(m)$ is irreducible, non-empty, every curve in $Z_d(m)$
is unstable by \cite[Claim 5]{kollar-non-quasiprojectic-moduli}, and
\[
\dim(Z_d(m)) \geq \dim |\mathcal{O}_{\mathbb{P}^2}(d)|-\binom{m+1}{2}=\binom{d+2}{2}-1-\binom{m+1}{2}.
\]
Hence
\[
\dim(Z_d(m)/\mathrm{PGl}_3) \geq \dfrac{(d+1)(d+2)}{2}-1-\dfrac{m(m+1)}{2}-8>0.
\]

To conclude, we show that $Z_d(m)/\mathrm{PGl}_3$ is contained in the uniform base locus of $U_d/\mathrm{PGl}_3$. This is done by showing the stronger statement
that $Z_d(m)/\mathrm{PGl}_3$ is contained in the uniform base locus of the entire quotient stack
$[\mathbb{P}^N/\mathrm{PGl}_3]$. 

Let $\mathscr{Z} \subset [\mathbb{P}^N/\mathrm{PGl}_3]$ be a prime divisor. 
As the atlas $\mathbb{P}^N \to [\mathbb{P}^N/\mathrm{PGl}_3]$ is smooth, 
the preimage $Z \subset \mathbb{P}^N$ of $\mathscr{Z}$ is a Weil divisor. 
Since $\mathbb{P}^N$ is smooth,
$Z=\mathbb{V}(s)$ is the vanishing locus of some global section $s$ of a line
bundle $\mathcal{O}_{\mathbb{P}^N}(a)$ on $\mathbb{P}^N$ with $a>0$. 
By definition, we find 
$\mathbb{P}^N \setminus \mathbb{P}^{N,ss} \subseteq Z$. 
As $Z_d(m) \subseteq \mathbb{P}^N \setminus \mathbb{P}^{N,ss} \subseteq Z$, 
we have $Z_d(m)/\mathrm{PGl}_3 \subseteq \mathscr{Z}$. 
This shows that $Z_d(m)/\mathrm{PGl}_3$ is contained in the uniform base locus of
$[\mathbb{P}^N/\mathrm{PGl}_3]$.
\end{example}

The following is a reformulation of the property to be a uniform base 
point. Recall that an open substack $\mathscr{U} \subseteq \mathscr{X}$ of an irreducible algebraic stack is 
\emph{big} if the complement $\mathscr{X}\setminus \mathscr{U}$ has codimension at least $2$ in 
$\mathscr{X}$.

\begin{lemma} \label{lemma: zariski neighborhoods around ubp}
    Let $\mathscr{X}$ be an irreducible algebraic stack 
    locally of finite type over $k$ and let $x \in |\mathscr{X}|$ be a geometric point. 
    Then $x$ is a uniform base point if and only if
    every open substack $\mathscr{U} \subseteq \mathscr{X}$ containing $x$ is big. \qed
\end{lemma}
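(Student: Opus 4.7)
The plan is to prove both directions by contraposition, exploiting the elementary duality between open substacks containing $x$ and closed substacks avoiding $x$ given by taking complements.

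For the ($\Rightarrow$) direction, I would start from an open substack $\mathscr{U} \subseteq \mathscr{X}$ containing $x$ and assume it is \emph{not} big, i.e., the complement $\mathscr{Z} := \mathscr{X} \setminus \mathscr{U}$ (with its reduced closed substack structure) contains an irreducible component of codimension at most $1$. Since $\mathscr{X}$ is irreducible and $\mathscr{U}$ is a non-empty open substack, every irreducible component of $\mathscr{Z}$ has codimension $\geq 1$, so by the failure of bigness at least one such component $\mathscr{Z}_0$ has codimension exactly $1$. Then $\mathscr{Z}_0$ is a non-empty codimension 1 closed substack with $x \notin \mathscr{Z}_0$, contradicting that $x$ is a uniform base point.

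For the ($\Leftarrow$) direction, I would again work contrapositively: if $x$ is not a uniform base point, then by definition there is a non-empty codimension 1 closed substack $\mathscr{D} \subseteq \mathscr{X}$ with $x \notin \mathscr{D}$. Then $\mathscr{U} := \mathscr{X} \setminus \mathscr{D}$ is an open substack containing $x$ whose complement has codimension $1$, hence is not big, contradicting the assumption.

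There is no serious obstacle; the only minor subtlety is that the definition of the uniform base locus intersects \emph{all} non-empty codimension 1 closed substacks rather than just the irreducible reduced ones (i.e., the prime divisors). This is harmless, since a point lies in a closed substack $\mathscr{D}$ if and only if it lies in one of the irreducible components of $|\mathscr{D}|$, so the intersection is unchanged upon restricting to prime divisors, which is what the argument above actually produces and uses.
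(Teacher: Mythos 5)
Your proof is correct and is precisely the elementary complementation argument the paper has in mind: the lemma is stated with an immediate \qed and no written proof, since passing between open substacks containing $x$ and closed substacks avoiding $x$ reduces the statement to the definitions. Your closing remark about prime divisors versus general codimension $1$ closed substacks is also accurate and harmless, as a point lies in a codimension $1$ closed substack if and only if it lies in one of its codimension $1$ irreducible components.
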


\begin{corollary} 
\label{corollary: ubp implies line bundles are not zariski locally trivializable}
    Let $\mathscr{X}$ be a normal irreducible quasi-separated 
    algebraic stack locally of finite type over $k$, and let $x\in |\mathscr{X}|$
    be a uniform base point. Then the only line bundle on $\mathscr{X}$ that becomes
    Zariski-locally trivial around $x$ is the trivial bundle.
\end{corollary}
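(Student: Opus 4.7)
The plan is to combine the bigness characterization of uniform base points from \Cref{lemma: zariski neighborhoods around ubp} with the standard extension principles for sections of line bundles on normal integral stacks. Concretely, let $\mathcal{L}$ be a line bundle on $\mathscr{X}$ which is trivial on some Zariski open neighborhood $\mathscr{U} \subseteq \mathscr{X}$ of $x$. Choose a trivialization, i.e., a nowhere vanishing section $s \in \Gamma(\mathscr{U}, \mathcal{L})$. The goal is to promote $s$ to a nowhere vanishing global section of $\mathcal{L}$ on all of $\mathscr{X}$.

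The first step is to invoke \Cref{lemma: zariski neighborhoods around ubp}: since $x$ is a uniform base point, the open substack $\mathscr{U}$ is big, so its complement $\mathscr{X} \setminus \mathscr{U}$ has codimension at least $2$. The second step is the extension of $s$ across this codimension-$2$ complement. On a normal integral algebraic stack, line bundles are reflexive, so the restriction $\Gamma(\mathscr{X}, \mathcal{L}) \to \Gamma(\mathscr{U}, \mathcal{L})$ is an isomorphism for any big open $\mathscr{U}$. Thus $s$ extends uniquely to a global section $\tilde s \in \Gamma(\mathscr{X}, \mathcal{L})$.

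The third step is to argue that $\tilde s$ is nowhere vanishing. Its vanishing locus $V(\tilde s) \subseteq \mathscr{X}$ is disjoint from $\mathscr{U}$, so it lies inside $\mathscr{X} \setminus \mathscr{U}$ and hence has codimension at least $2$ (if non-empty). On the other hand, on a normal integral stack the vanishing locus of a non-zero section of a line bundle is either empty or of pure codimension $1$ (this can be checked on a smooth atlas where $\mathcal{L}$ is locally trivial and the section becomes a regular function whose zero locus is a principal Weil divisor). The two statements force $V(\tilde s) = \emptyset$, so $\tilde s$ trivializes $\mathcal{L}$ globally.

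The only step requiring care is the justification that the reflexivity statement and the pure codimension of divisors of sections pass from the scheme case to the stack case. Both reduce to working on a smooth atlas $U \to \mathscr{X}$, where $U$ is normal and integral and the usual commutative algebra applies; the extension is then unique, hence descends. So no genuinely new input beyond normality and the bigness of $\mathscr{U}$ is needed, and this is where the proof concludes.
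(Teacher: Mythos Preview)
Your proof is correct and follows essentially the same approach as the paper: both use \Cref{lemma: zariski neighborhoods around ubp} to get that every open neighborhood of $x$ is big, and then invoke the Hartogs-type extension principle for normal integral stacks. The paper simply cites Hartogs's lemma directly (\cite[\href{https://stacks.math.columbia.edu/tag/0EBJ}{Tag 0EBJ}]{sp}), whereas you unpack the argument by extending a trivializing section across the codimension $\geq 2$ complement and checking that the extension remains nowhere vanishing; this is just the standard proof of that Hartogs statement written out.
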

\begin{proof}
    This follows from Hartogs's lemma (see \cite[\href{https://stacks.math.columbia.edu/tag/0EBJ}{Tag 0EBJ}]{sp}, which also applies to normal irreducible algebraic stacks), because every Zariski open neighborhood of $x$
    is big (by \Cref{lemma: zariski neighborhoods around ubp}) and $\mathscr{X}$ is normal.
\end{proof}

As a warm-up, we study the notion of adequate moduli space \cite{ams} from the point of view of uniform base points. This is related to the converse GIT statement in \cite[Thm. 11.4]{gms}, and indeed the strategy of proof is the same.
\begin{proposition}\label{prop: ubp-in-unstable}
Let $\mathscr{X}$ be an integral algebraic stack locally of finite type and with affine diagonal over $k$. Suppose that $\mathscr{Y} \subsetneq \mathscr{X}$ is a strictly smaller open substack that admits a schematic adequate moduli space $\pi: \mathscr{Y} \to Y$. If $x \in |\mathscr{X}|$ is a uniform base point, then $x \notin |\mathscr{Y}|$. 
\end{proposition}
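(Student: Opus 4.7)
The plan is to prove the proposition by contradiction. Assume $x \in |\mathscr{Y}|$, and set $y := \pi(x) \in Y$. The goal is to produce a codimension one closed substack of $\mathscr{X}$ not containing $x$, which would contradict $x$ being a uniform base point. Applying \Cref{lemma: zariski neighborhoods around ubp} to the open neighborhood $\mathscr{Y}$ of $x$, the open substack $\mathscr{Y}$ must be big in $\mathscr{X}$, so $\mathscr{X} \setminus \mathscr{Y}$ has codimension at least $2$. Consequently, since $\mathscr{X}$ is integral and $\mathscr{Y}$ is open and dense, the reduced Zariski closure inside $\mathscr{X}$ of any codimension one closed substack of $\mathscr{Y}$ remains of codimension one in $\mathscr{X}$. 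It therefore suffices to produce a codimension one closed substack $D \subseteq \mathscr{Y}$ with $x \notin D$.

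For this I would exploit the scheme structure on $Y$. Pick an affine open neighborhood $V = \Spec A \subseteq Y$ of $y$, and, assuming $\dim_y Y \geq 1$, choose an element $f \in A$ with $f(y) \neq 0$ but vanishing at some other closed point $y' \in V$ (for instance, any element of $\mathfrak{m}_{y'} \setminus \mathfrak{m}_y$). The pullback $\pi^* f$ is a nonzero regular function on the open substack $\pi^{-1}(V) \subseteq \mathscr{Y}$, because $\pi$ is dominant and $\mathscr{Y}$ is integral. Its vanishing locus $D := \{\pi^* f = 0\} \subseteq \pi^{-1}(V)$ is then a nonempty effective Cartier divisor of codimension one, and $x \notin D$ since $(\pi^* f)(x) = f(y) \neq 0$. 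Taking the closure of $D$ inside $\mathscr{X}$ yields the required codimension one closed substack avoiding $x$ and furnishes the contradiction.

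The main obstacle is the degenerate case $\dim_y Y = 0$, which forces $Y = \Spec k$ (since $Y$ is irreducible by surjectivity of $\pi$) and $A = k$, so no separating function $f$ exists. In this situation $\mathscr{Y}$ has a unique closed point $x_0$ and every nonempty closed substack of $\mathscr{Y}$ contains $x_0$, so the direct construction inside $\mathscr{Y}$ breaks down. I would handle this edge case separately by invoking the local structure theorem for adequate moduli spaces to present $\mathscr{Y}$ étale-locally around $x_0$ as a quotient stack $[\Spec B/G]$ with $G$ reductive, and then leverage the affine diagonal hypothesis together with $\mathscr{Y} \subsetneq \mathscr{X}$ to engineer a codimension one Cartier divisor on an étale chart descending to $\mathscr{X}$; this parallels the converse GIT strategy in \cite[Thm.~11.4]{gms} noted in the text.
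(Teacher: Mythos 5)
Your argument for the case $\dim Y \geq 1$ is correct, and it is in fact more elementary than the paper's: you never need the affine diagonal hypothesis there, since pulling back a function $f$ from an affine open of the schematic moduli space that separates $\pi(x)$ from another closed point produces, via Krull's Hauptidealsatz on the integral stack $\pi^{-1}(V)$, a non-empty pure codimension $1$ closed substack avoiding $x$, whose closure in $\mathscr{X}$ still has codimension $1$ (this last point is automatic for a dense open in an irreducible stack -- the generic points of $D$ lie in $\pi^{-1}(V)$ and codimension is computed at generic points -- so the detour through \Cref{lemma: zariski neighborhoods around ubp} and bigness is not needed).

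The genuine gap is the case $Y = \Spec(k)$, which you acknowledge but do not prove, and which is not a marginal edge case: it is precisely where the hypotheses ``affine diagonal'' and ``adequate moduli space'' do real work, and it is the case closest to the converse GIT statement that motivates the proposition. Your proposed fix is both vague and aimed at the wrong target: there is nothing to ``descend'' from an \'etale chart, because the closed substack one must analyze, namely $\mathscr{X} \setminus \mathscr{Y}$ (more generally $\mathscr{X}\setminus\pi^{-1}(U)$ for $U\subseteq Y$ affine open), is already given on $\mathscr{X}$; what is needed is only that it has a component of codimension $1$, and codimension can be checked on a smooth chart. The paper's argument, which handles both cases uniformly, runs as follows: restrict to a quasi-compact open $\mathscr{V}\subseteq\mathscr{X}$ containing $\pi^{-1}(U)$ and take a smooth cover $f: V \to \mathscr{V}$ with $V$ affine; by the affine diagonal hypothesis $f$ is an affine morphism, so $f^{-1}(\pi^{-1}(U)) \to U$ is adequately affine over an affine scheme, hence $f^{-1}(\pi^{-1}(U))$ is itself affine by Alper's Serre-type criterion \cite[Thm.~4.3.1]{ams}; the complement of a non-empty affine open in $V$ has pure codimension $1$ by \cite[\href{https://stacks.math.columbia.edu/tag/0BCU}{Tag 0BCU}]{sp}, and this descends along the smooth cover. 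Note that $\mathscr{X}\setminus\pi^{-1}(U)$ is non-empty exactly because $\mathscr{Y}\subsetneq\mathscr{X}$ is strict. If you want to keep your two-case structure, you should replace the local-structure-theorem sketch by this affineness argument applied to $\pi^{-1}(U)=\mathscr{Y}$ itself; as written, the degenerate case remains unproved.
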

\begin{proof}
    Let $y \in |\mathscr{Y}|$. We show that $y$ is not a uniform base
    point of $\mathscr{X}$. Let $U \subseteq Y$ be an affine open neighborhood of
    $\pi(y)$ and consider the preimage $\pi^{-1}(U) \subseteq \mathscr{Y} \subsetneq
    \mathscr{X}$.
    To conclude, we show that the non-empty complement $\mathscr{X}
    \setminus \pi^{-1}(U)$ (equipped with its reduced substack structure) has
    pure codimension 1.
    
    Note that it suffices to show that this is the case when
    we replace $\mathscr{X}$ with an arbitrary quasi-compact open substack
    $\mathscr{V} \subseteq \mathscr{X}$ containing $\pi^{-1}(U)$. 
    Furthermore, we may check the desired codimension estimate after passing to
    a smooth cover $f: V \to \mathscr{V}$, where $V$ is an affine scheme. 
    Since $V$ is affine and $\mathscr{V}$ has affine diagonal, 
    it follows that $f$ is affine. 
    Since $\pi^{-1}(U) \to U$ is adequately affine, the same holds for the composition
    $\pi \circ f: f^{-1}(\pi^{-1}(U)) \to \pi^{-1}(U) \to U$. 
    By \cite[Thm. 4.3.1]{ams}, this implies that $f^{-1}(\pi^{-1}(U))$ is affine,
    and therefore the complement $V \setminus f^{-1}(\pi^{-1}(U))$ has pure
    codimension 1 by \cite[\href{https://stacks.math.columbia.edu/tag/0BCU}{Tag
    0BCU}]{sp}.
\end{proof}

\subsection{Schematically trivial points}

\begin{definition}[Trivial universal scheme] \label{defn: trivial universal scheme}
    Let $\mathscr{X}$ be an algebraic stack over $k$. We say that $\mathscr{X}$ 
    has \emph{trivial universal scheme}
    if any morphism $\mathscr{X} \to Y$ to a $k$-scheme $Y$ 
    factors (uniquely) through a $k$-point $\Spec(k) \to Y$. 
\end{definition}

\begin{definition}[Schematically trivial points]
\label{definition: schematically trivial}
    Let $\mathscr{X}$ be an irreducible algebraic stack locally of finite type over $k$. We say that a geometric point $x \in |\mathscr{X}|$ is \emph{schematically trivial}
    if every open substack $\mathscr{U} \subseteq \mathscr{X}$ containing $x$ has trivial universal scheme.
\end{definition}

Let us present an alternative characterization of schematically trivial points in terms of the Zariski stalk.
\begin{proposition}[Schematic triviality and local functions] \label{prop: schematically trivial points and local functions}
    Let $\mathscr{X}$ be an integral quasi-separated algebraic stack locally of finite type over $k$. Let $x \in |\mathscr{X}|$ be a geometric point. Then, the following are equivalent:
    \begin{enumerate}[(i)]
        \item The point $x$ is schematically trivial.
        \item For any open substack $\mathscr{U} \subseteq \mathscr{X}$ containing $x$, we have $H^0(\mathscr{U}, \mathcal{O}_{\mathscr{U}}) = k$.
        \item We have
         \[
            \colim_{x\in \mathscr{U} \subseteq 
            \mathscr{X}}H^0(\mathscr{U},\mathcal{O}_{\mathscr{U}})=k, 
        \]
        where the colimit runs over all open substacks $\mathscr{U} \subseteq \mathscr{X}$ containing $x$.
    \end{enumerate}
\end{proposition}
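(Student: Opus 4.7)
My plan is to prove the cyclic chain $\mathrm{(i)} \Rightarrow \mathrm{(ii)} \Leftrightarrow \mathrm{(iii)} \Rightarrow \mathrm{(i)}$. The implication $\mathrm{(i)} \Rightarrow \mathrm{(ii)}$ is immediate: a section $s \in H^0(\mathscr{U}, \mathcal{O}_\mathscr{U})$ is the same datum as a morphism $\mathscr{U} \to \mathbb{A}^1_k$, and by (i) such a morphism factors through a $k$-point, forcing $s$ to be a constant. For the equivalence $\mathrm{(ii)} \Leftrightarrow \mathrm{(iii)}$ I would use that, since $\mathscr{X}$ is integral, restriction maps $H^0(\mathscr{U}, \mathcal{O}) \to H^0(\mathscr{V}, \mathcal{O})$ between nonempty open substacks are injective (both embed into the function field of $\mathscr{X}$). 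Then $\mathrm{(ii)} \Rightarrow \mathrm{(iii)}$ is obvious, while conversely, if the colimit collapses to $k$, every $s \in H^0(\mathscr{U}, \mathcal{O})$ agrees with some constant $c \in k$ on a smaller neighborhood of $x$, and injectivity propagates the equality $s = c$ to all of $\mathscr{U}$.

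For the main step $\mathrm{(ii)} \Rightarrow \mathrm{(i)}$, given an open $\mathscr{U} \ni x$ and a morphism $f \colon \mathscr{U} \to Y$ to a $k$-scheme, I would choose an affine open $V = \Spec(A) \subseteq Y$ containing $f(x)$, so that $f^{-1}(V)$ is an open neighborhood of $x$. By (ii), $H^0(f^{-1}(V), \mathcal{O}) = k$, so $f|_{f^{-1}(V)}$ is given by a $k$-algebra map $A \to k$ and therefore factors through the $k$-point $y := f(x) \in V(k)$. In particular the generic point $\xi$ of $\mathscr{U}$, which lies in the dense open $f^{-1}(V)$, satisfies $f(\xi) = y$. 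It then remains to upgrade this to $f(\mathscr{U}) \subseteq V$, which I would argue by contradiction: if $f(u) = y' \notin V$ for some $u \in \mathscr{U}$, then since $\xi \leadsto u$ in $|\mathscr{U}|$ continuity gives $y \leadsto y'$, so $y' \in \overline{\{y\}}$. For any affine open $V' \subseteq Y$ containing $y'$, the intersection $V' \cap \overline{\{y\}}$ is a nonempty open of the irreducible space $\overline{\{y\}}$, hence contains its generic point $y$. So $y \in V'$, giving $x, u \in f^{-1}(V')$; applying (ii) to $f^{-1}(V')$ then forces $f|_{f^{-1}(V')}$ to factor through $f(x) = y$, contradicting $f(u) = y' \neq y$. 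Thus $f$ factors through $V$, and then through the $k$-point $y$ by the affine case.

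The main obstacle I anticipate is handling a target $Y$ that may not be separated, which is why I avoid the clean closed-diagonal argument and rely on the specialization trick that any open containing a point of $\overline{\{y\}}$ must also contain $y$. A secondary technical point is justifying the injectivity of restriction on an integral algebraic stack, which should follow from standard facts about the function field of $\mathscr{X}$ realized as the colimit of $H^0$ over nonempty open substacks.
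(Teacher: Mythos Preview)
Your argument is correct. The implications $\mathrm{(i)}\Rightarrow\mathrm{(ii)}$ and $\mathrm{(ii)}\Leftrightarrow\mathrm{(iii)}$ match the paper's proof (the paper simply declares the latter ``clear'', and your justification via injectivity of restriction on an integral stack is the intended one).

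For the main step $\mathrm{(ii)}\Rightarrow\mathrm{(i)}$ you take a genuinely different route. The paper first replaces $Y$ by the scheme-theoretic image of $f$, reducing to the case where $Y$ is integral and $f$ is schematically dominant, and then shows $Y=\Spec(k)$ by proving that every affine open of $Y$ has global sections $k$; the delicate point there is verifying that $f$ remains schematically dominant after restricting to the preimage of an affine open (this is not automatic since $f$ need not be quasi-compact), which the paper handles by passing to a finite-type atlas. Your argument avoids both the scheme-theoretic image reduction and the schematic dominance issue entirely: you work pointwise, using that the generic point of $\mathscr{U}$ maps to the $k$-point $y$, hence every point of $|\mathscr{U}|$ maps into $\overline{\{y\}}$, and then exploit the elementary fact that any open neighborhood of a point of $\overline{\{y\}}$ must contain $y$ itself. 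This is more topological and arguably more direct; it also handles non-separated and non-finite-type targets $Y$ uniformly without special pleading. The paper's approach, by contrast, yields the slightly stronger intermediate conclusion that the scheme-theoretic image of any such $f$ is $\Spec(k)$, which is occasionally useful elsewhere.
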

\begin{proof}

     The equivalence between (ii) and (iii) is clear. The implication (i) $\Rightarrow$ (ii) is also clear, as global sections of the structure sheaf $\mathcal{O}_{\mathscr{U}}$ correspond to morphisms $\mathscr{U} \to \mathbb{A}^1$, which have to be constant as $\mathscr{U}$ has trivial universal scheme.
     
    For the implication (ii) $\Rightarrow$ (i), assume that $H^0(\mathscr{U},\mathcal{O}_{\mathscr{U}})=k$
    for all Zariski open neighborhoods $\mathscr{U}$ of $x$.
    For any morphism $\pi: \mathscr{U} \to Y$ to a $k$-scheme $Y$, factoring $\pi$ via its scheme-theoretic image, we can assume without
    loss of generality that $\pi$ is schematically dominant and $Y$ is integral. 
    To conclude, we show that $Y = \Spec(k)$. 
    
    Let $U \subseteq Y$ be an affine open subscheme containing $\pi(x)$. First we claim that $U = \Spec(k)$, for which it suffices to show that every
    morphism $f: U \to \mathbb{A}^1$ is constant (i.e., factors through a
    $k$-point of $\mathbb{A}^1$). By assumption (ii), the composition $f \circ \pi: \pi^{-1}(U) \to U \to
    \mathbb{A}^1$ factors uniquely through a $k$-point of $\mathbb{A}^1$. To conclude, it suffices to show
    that $\pi: \pi^{-1}(U) \to U$ remains schematically dominant (this is not
    automatic a priori, since $\pi$ need not be quasi-compact). 
    
    To see this, we fix a smooth schematically dominant morphism $g: X \to \mathscr{U}$ from an integral finite type scheme $X$. Then the composition $\pi \circ g: X \to \mathscr{U}
    \to Y$ is a schematically dominant morphism between integral schemes.
    In particular, the generic point of $X$ maps to the generic point of $Y$. 
    It follows that the restriction
    $\pi \circ g: (\pi \circ g)^{-1}(U) \to \pi^{-1}(U) \to U$ also sends the generic point of $(\pi \circ g)^{-1}(U)$ to the generic point of $U$, 
    and hence is schematically dominant. 
    This implies that $\pi: \pi^{-1}(U) \to U$ is schematically dominant, thus
    concluding the proof of the claim that $U = \Spec(k)$.

    Now we show that $Y = \Spec(k)$. Since $Y$ is integral, this is equivalent to showing that every non-empty affine open subscheme $V \subseteq Y$ satisfies $V = \Spec(k)$. For any such $V$, the intersection $V \cap U \subseteq U$ is non-empty, and so it must be equal to $U = \Spec(k)$. Thus, we have $k \subseteq H^0(V, \cO_V) \hookrightarrow H^0(U \cap V, \cO_{U \cap V}) = k$, which implies that $H^0(V, \cO_V)=k$. Hence $V = \Spec(k)$, as desired.
\end{proof}

Next, we show that every local uniform base point is schematically trivial. In particular, \Cref{example-vistoli} and \Cref{example-kollar} are instances of algebraic spaces with schematically trivial points.
\begin{lemma} \label{lemma: ubp implies schematically trivial}
    Let $\mathscr{X}$ be an integral quasi-separated algebraic stack locally of finite type
    over $k$.
    If $\mathscr{X}$ has a uniform base point, then it has trivial universal scheme.
    In particular, every local uniform base point of $\mathscr{X}$ is schematically trivial.
\end{lemma}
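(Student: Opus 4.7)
The plan is to reduce both statements of the lemma to the assertion that every local uniform base point is schematically trivial, from which the trivial universal scheme property follows directly by unpacking \Cref{definition: schematically trivial}: a uniform base point of $\mathscr{X}$ is a fortiori a local uniform base point (take $\mathscr{U}=\mathscr{X}$), so $\mathscr{X}$ itself will then be one of the open substacks forced to have trivial universal scheme. I will deduce schematic triviality by verifying the global-sections criterion of \Cref{prop: schematically trivial points and local functions}.

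The heart of the argument will be a core claim: if $\mathscr{Y}$ is an integral quasi-separated algebraic stack locally of finite type over $k$ with a uniform base point $y$, then $H^0(\mathscr{Y},\mathcal{O}_{\mathscr{Y}})=k$. I would prove this by contradiction. A non-constant global section $f$ gives a morphism $\mathscr{Y}\to\mathbb{A}^1$, and since $f$ is non-constant one can pick $c$ in its image with $c\neq f(y)$. As $\mathscr{Y}$ is integral the section $f-c$ is non-zero, hence a non-zero-divisor, so its vanishing locus $\mathbb{V}(f-c)$ is a non-empty effective Cartier divisor of pure codimension one that avoids $y$. Any irreducible component is then a prime divisor of $\mathscr{Y}$ not containing $y$, contradicting the uniform base point hypothesis.

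To finish, consider a local uniform base point $x\in|\mathscr{U}|_{bs}$ and an arbitrary open neighborhood $\mathscr{V}\subseteq\mathscr{X}$ of $x$. First I would observe that $x$ is itself a uniform base point of $\mathscr{V}\cap\mathscr{U}$: for every prime divisor $D'\subset\mathscr{V}\cap\mathscr{U}$, the Zariski closure $\overline{D'}$ in $\mathscr{U}$ is irreducible of the same dimension, hence a prime divisor of $\mathscr{U}$, so $x\in\overline{D'}$; since $x\in\mathscr{V}\cap\mathscr{U}$, this forces $x\in\overline{D'}\cap(\mathscr{V}\cap\mathscr{U})=D'$. Applying the core claim to $\mathscr{V}\cap\mathscr{U}$ then yields $H^0(\mathscr{V}\cap\mathscr{U},\mathcal{O})=k$, and the injectivity of the restriction map $H^0(\mathscr{V},\mathcal{O}_{\mathscr{V}})\hookrightarrow H^0(\mathscr{V}\cap\mathscr{U},\mathcal{O})$, valid because $\mathscr{V}$ is integral and $\mathscr{V}\cap\mathscr{U}$ is a non-empty open, will force $H^0(\mathscr{V},\mathcal{O}_{\mathscr{V}})=k$ as well. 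Invoking \Cref{prop: schematically trivial points and local functions} will then conclude that $x$ is schematically trivial.

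The main obstacle I expect is the codimension bookkeeping in the core claim: verifying on an integral quasi-separated stack that a non-zero global function really cuts out a pure codimension-one non-empty locus whose irreducible components are genuine prime divisors. This should reduce cleanly to the analogous scheme-theoretic statement after passing to a smooth atlas, but the reduction deserves careful mention.
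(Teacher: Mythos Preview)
Your proposal is correct and takes essentially the same approach as the paper: both arguments produce, from a non-constant function, an effective Cartier divisor avoiding the uniform base point and thereby reach a contradiction. The differences are cosmetic---you route through the $H^0$-criterion of \Cref{prop: schematically trivial points and local functions} to reduce the target to $\mathbb{A}^1$ and invert the logical order of the two assertions, whereas the paper argues directly with an arbitrary target scheme $Y$ and passes to an affine open; one small imprecision to fix is that $y$ is only a point of $|\mathscr{Y}|$, not necessarily a $k$-point, so ``$c\neq f(y)$'' should be read as choosing a $k$-point $c\in\mathbb{A}^1$ in the image distinct from the image of $y$ in $|\mathbb{A}^1|$.
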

\begin{proof}
    Let $\pi: \mathscr{X} \to Y$ be a morphism to a $k$-scheme $Y$. As in the proof of \Cref{prop: schematically trivial points and local functions}, 
    we may assume that $\pi$ is schematically dominant and $Y$ is integral. 
    To show $Y = \Spec(k)$, we find an affine open subscheme $U \subseteq Y$ with $U = \Spec(k)$. 
    
    Let $x \in |\mathscr{X}|$ be a uniform base point. Let $U \subseteq Y$ be an affine open subscheme containing $\pi(x)$.
    Suppose for the sake of contradiction that $U \neq \Spec(k)$.
    Then $U\setminus \pi(x)$ is not empty and,
    as in the proof of \Cref{prop: schematically trivial points and local functions}, 
    the morphism  $\pi^{-1}(U\setminus \pi(x)) \to U\setminus \pi(x)$ 
    is schematically dominant. In particular $\pi^{-1}(U\setminus \pi(x))$ 
    is non-empty. Since the stack
    $\pi^{-1}(U\setminus \pi(x))$ is locally of
    finite type, it contains a $k$-point and the same holds for $U\setminus \pi(x)$.
    Thus, there exists a (closed) $k$-point $y \in U(k)$ that is distinct from
    $\pi(x)$ in $|U|$. 
    
    Since $U$ is affine, there is a global section 
    $s \in H^0(U, \cO_U)$ that vanishes at $y$ but does not vanish at $\pi(x)$. 
    The preimage $\mathscr{Z} := \pi^{-1}(\mathbb{V}(s))$ of the vanishing
    locus of $s$ is a closed substack of $\pi^{-1}(U)$ that does not contain $x$. 
    Note that $\mathscr{Z}$ is cut out by the non-zero
    global section $\pi^*s \in H^0(\pi^{-1}(U), \cO_{\pi^{-1}(U)})$ on the
    integral stack $\pi^{-1}(U)$, and hence is a Cartier divisor on $\pi^{-1}(U)$. 
    The closure $\overline{\mathscr{Z}} \subset \mathscr{X}$ is 
    a closed substack of pure codimension 1 that does not contain $x$, 
    which contradicts the assumption that $x$ is a uniform base point of $\mathscr{X}$.
\end{proof}

\subsection{Descending schematic triviality}
 \begin{proposition}[Descent of schematic triviality]
  \label{prop: descent of schematic triviality}
    Let $f: \mathscr{X} \to \mathscr{Y}$ be a sche\-matically dominant morphism of integral 
    quasi-separated
    algebraic stacks locally of finite type over $k$. If $x \in |\mathscr{X}|$ is schematically trivial, then $f(x) \in |\mathscr{Y}|$ is schematically trivial.
\end{proposition}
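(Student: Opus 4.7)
The plan is to translate schematic triviality to the global-section condition from \Cref{prop: schematically trivial points and local functions}(ii) and then move it along $f$ using the only thing schematic dominance is designed to give us: injectivity on sections of the structure sheaf.

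I would fix an arbitrary open substack $\mathscr{V} \subseteq \mathscr{Y}$ containing $f(x)$ and show that $H^0(\mathscr{V}, \mathcal{O}_\mathscr{V}) = k$. The preimage $f^{-1}(\mathscr{V})$ is an open substack containing $x$, and since it is a non-empty open of the integral stack $\mathscr{X}$, it is itself integral. By hypothesis $x$ is schematically trivial, so \Cref{prop: schematically trivial points and local functions}(ii) applied to the neighborhood $f^{-1}(\mathscr{V})$ of $x$ gives $H^0(f^{-1}(\mathscr{V}), \mathcal{O}_{f^{-1}(\mathscr{V})}) = k$.

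Next I would observe that the restriction $f|_{f^{-1}(\mathscr{V})}\colon f^{-1}(\mathscr{V}) \to \mathscr{V}$ is still schematically dominant: schematic dominance is the injectivity of $\mathcal{O}_\mathscr{Y} \to f_*\mathcal{O}_\mathscr{X}$, and restriction along the open immersion $\mathscr{V} \hookrightarrow \mathscr{Y}$ is exact and so preserves this injection. Taking global sections on $\mathscr{V}$ of the resulting injection $\mathcal{O}_\mathscr{V} \hookrightarrow (f|_{f^{-1}(\mathscr{V})})_*\mathcal{O}_{f^{-1}(\mathscr{V})}$ yields
\[
k \;\subseteq\; H^0(\mathscr{V}, \mathcal{O}_\mathscr{V}) \;\hookrightarrow\; H^0\!\left(f^{-1}(\mathscr{V}), \mathcal{O}_{f^{-1}(\mathscr{V})}\right) \;=\; k,
\]
so $H^0(\mathscr{V}, \mathcal{O}_\mathscr{V}) = k$. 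Since $\mathscr{V}$ was an arbitrary open neighborhood of $f(x)$, a second application of \Cref{prop: schematically trivial points and local functions}(ii) (this time to $\mathscr{Y}$, which is integral, quasi-separated, and locally of finite type by our standing conventions) gives that $f(x)$ is schematically trivial.

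There is no serious obstacle here. The only non-tautological input is that schematic dominance is preserved under restriction to opens, and this is immediate from its sheaf-theoretic definition. Everything else is formal: the global-section criterion does the bookkeeping, and left-exactness of $H^0$ does the work.
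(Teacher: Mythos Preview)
Your proof is correct and follows essentially the same approach as the paper: both reduce to the global-section criterion of \Cref{prop: schematically trivial points and local functions}(ii) and then use that the restriction $f^{-1}(\mathscr{V}) \to \mathscr{V}$ remains schematically dominant. The only cosmetic difference is that you justify this last step via the sheaf-theoretic definition (injectivity of $\cO_{\mathscr{Y}} \to f_*\cO_{\mathscr{X}}$ is manifestly local on the target, using flat base change for the open immersion $\mathscr{V} \hookrightarrow \mathscr{Y}$), whereas the paper instead invokes the generic-point argument carried out in the proof of \Cref{prop: schematically trivial points and local functions}; both are valid and yield the same conclusion.
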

\begin{proof}
    By \Cref{prop: schematically trivial points and local functions} it suffices
    to show that for all open substacks $\mathscr{U} \subseteq \mathscr{Y}$
    containing $f(x)$, we have $H^0(\mathscr{U}, \cO_{\mathscr{U}}) =k$. To this
    end, choose such an open substack $\mathscr{U} \subseteq \mathscr{Y}$ and a
    morphism $g: \mathscr{U} \to \mathbb{A}^1$. By the assumption that $x$ is
    schematically trivial, the composition $g \circ f: f^{-1}(\mathscr{U}) \to
    \mathscr{U} \to \mathbb{A}^1$ is constant (i.e., it factors
    through a $k$-point of $\mathbb{A}^1$). By the same argument as in the proof
    of \Cref{prop: schematically trivial points and local functions}, 
    the morphism $f: f^{-1}(\mathscr{U}) \to \mathscr{U}$ is schematically dominant. 
    It follows that $g: \mathscr{U} \to \mathbb{A}^1$ is constant, as desired.
\end{proof}

For our next descent property, we recall the notion of a categorical space.
\begin{definition}[Categorical space] \label{defn: categorical space}
    Let $\pi: \mathscr{X} \to X$ be a morphism from an algebraic stack to an algebraic space over $k$. 
    We say that $\pi$ is a \emph{categorical space} if it is initial among maps
    from $\mathscr{X}$ to algebraic spaces over $k$.
\end{definition}

 \begin{corollary}[Schematic triviality and categorical spaces] 
\label{prop: schematic triviality descends for uniform categorical spaces}
    Let $\mathscr{X}$ be an integral quasi-separated
    algebraic stack locally of finite type over $k$. Let $\pi: \mathscr{X} \to X$ be a categorical space such that $X$ is locally of finite type over $k$. If $x \in |\mathscr{X}|$ is schematically trivial, then $\pi(x) \in |X|$ is schematically trivial.
\end{corollary}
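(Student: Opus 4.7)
The plan is to reduce the statement to \Cref{prop: descent of schematic triviality}, which gives descent of schematic triviality along any schematically dominant morphism between integral quasi-separated algebraic stacks locally of finite type over $k$. Since $X$ is only assumed to be an algebraic space locally of finite type, I first need to verify that $X$ is integral and that $\pi$ is schematically dominant; both of these will follow from the initiality of $\pi$ as a categorical space.

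To see that $X$ is reduced, I would note that $\pi$ factors through $X_{\red}$ because $\mathscr{X}$ is reduced, so applying the initial property with $Y = X_{\red}$ yields a morphism $\beta: X \to X_{\red}$ whose composition with the closed immersion $X_{\red} \hookrightarrow X$ is a factorization of $\pi$; uniqueness in the initial property (compared with the tautological factorization $\pi = \mathrm{id}_X \circ \pi$) forces this composition to be $\mathrm{id}_X$, hence $X = X_{\red}$. For irreducibility, if $X = X_1 \cup X_2$ were a decomposition into two proper closed subspaces, the scheme-theoretic image of $\pi$ would be integral (as the closure of the image of an irreducible reduced stack) and so contained in some $X_i$; the same initiality trick then forces $X = X_i$, a contradiction. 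Schematic dominance is entirely analogous: if $X' \hookrightarrow X$ denotes the scheme-theoretic image of $\pi$, the initial factorization of $\pi$ through $X'$ forces $X' = X$.

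Once $X$ is integral and $\pi$ is schematically dominant, the conclusion is immediate from \Cref{prop: descent of schematic triviality} applied to $\pi$ and the schematically trivial point $x$.

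The main subtlety I anticipate is a single recurring move: whenever $\pi$ admits a factorization $\pi = \iota \circ \pi'$ through a closed immersion $\iota: Z \hookrightarrow X$, the initial property applied to $\pi': \mathscr{X} \to Z$ yields a morphism $\beta: X \to Z$ with $\pi' = \beta \circ \pi$, and the uniqueness clause compared against $\pi = \mathrm{id}_X \circ \pi$ forces $\iota \circ \beta = \mathrm{id}_X$, hence $Z = X$. Once this template is in hand, reducedness, irreducibility, and schematic dominance all follow by specializing $Z$ to $X_{\red}$, to the components $X_i$, and to the scheme-theoretic image of $\pi$, respectively. Everything else is a direct citation.
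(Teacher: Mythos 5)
Your proposal is correct and follows essentially the same route as the paper: the paper's proof likewise passes to the scheme-theoretic image of $\pi$ and uses the universal property of the categorical space to conclude that $\pi$ is schematically dominant and $X$ is integral, then cites \Cref{prop: descent of schematic triviality}. You have merely spelled out the "initiality versus closed immersion" template that the paper leaves implicit, and that template is sound (a closed immersion with a left inverse arising from uniqueness in the universal property is an isomorphism, and the scheme-theoretic image of the reduced irreducible $\mathscr{X}$ is indeed integral).
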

\begin{proof}
    By considering the scheme theoretic image of $\pi$, the universal property
    of $\pi$ implies that $\pi$ is schematically  dominant and $X$ is integral. 
    Then we can apply \Cref{prop: descent of schematic triviality} to conclude.
\end{proof}

\begin{remark}
    For any integral quasi-separated
    algebraic stack $\mathscr{X}$ locally of finite type over $k$, we will see that the locus $\mathscr{Z} \subseteq \mathscr{X}$ of schematically trivial $k$-points is closed (\Cref{corollary: locus of schematically trivial points is closed}). In particular, by \Cref{prop: schematic triviality descends for uniform categorical spaces}, if $\mathscr{U} \subseteq \mathscr{X}$ is an open substack which admits a schematic categorical space that is not equal to $\Spec(k)$, then we have $\mathscr{U} \subseteq \mathscr{X} \setminus \mathscr{Z}$. Hence, $\mathscr{X} \setminus \mathscr{Z}$ is the maximal open substack that could possibly admit a non-trivial schematic categorical space.
\end{remark}

\section{Local uniform base points versus schematically trivial points} \label{section: local ubp vs schematically trivial}
In many cases, the notion of a schematically trivial point and a local uniform base point agree. This is a priori surprising since being a local uniform base point is a purely topological property, whereas schematic triviality is a geometric property. Recall that every local uniform base point is schematically trivial (see \Cref{lemma: ubp implies schematically trivial}). For the converse, 
the strategy of proof is the same in all cases and relies foremost 
on the theorem of the base together with the observation that a schematically trivial
point of a normal proper algebraic space implies the triviality of $\pic^0$.

\subsection{The locally factorial case}
In this subsection, we prove the converse for certain types of stacks
(\Cref{thm:main}), including the case of proper locally factorial algebraic spaces
(\Cref{thm: schematically trivial = local ubp for proper locally factorial}).

\begin{theorem}\label{thm:main}
    Let $\mathscr{X}$ be a locally factorial (\Cref{defn: local factoriality})
    irreducible quasi-separated algebraic stack over $k$ with finitely generated Picard group.
    Let $x\in \mathscr{X}(k)$ be a $k$-point. Then
    $x$ is a local uniform base point if and only if $x$ is schematically trivial.
\end{theorem}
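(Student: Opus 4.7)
The implication from local uniform base point to schematic triviality is already supplied by \Cref{lemma: ubp implies schematically trivial}, so the work is in the converse. Assume $x$ is schematically trivial. My plan is to build an open neighborhood of $x$ on which every non-empty codimension one closed substack contains $x$, by iteratively deleting prime divisors avoiding $x$ and using the finite generation of $\pic(\mathscr{X})$ to force termination.

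Set $\mathscr{U}_0 := \mathscr{X}$. Inductively: if $x$ is a uniform base point of $\mathscr{U}_k$, stop (and conclude that $x$ is a local uniform base point of $\mathscr{X}$); otherwise pick a prime divisor $D_k \subset \mathscr{U}_k$ not containing $x$ and set $\mathscr{U}_{k+1} := \mathscr{U}_k \setminus D_k$. Since $\mathscr{X}$ is irreducible, the Zariski closure $\tilde D_k \subset \mathscr{X}$ is a prime Weil divisor, hence Cartier by local factoriality, so it is cut out by a canonical section $\tilde s_k$ of a line bundle $\cO_{\mathscr{X}}(\tilde D_k) \in \pic(\mathscr{X})$. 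A short induction identifies $\mathscr{U}_k = \mathscr{X} \setminus \bigcup_{i<k} \tilde D_i$. Let $G_k \subseteq \pic(\mathscr{X})$ be the subgroup generated by $[\cO_{\mathscr{X}}(\tilde D_0)],\dots,[\cO_{\mathscr{X}}(\tilde D_{k-1})]$; the strategy is to prove $G_0 \subsetneq G_1 \subsetneq \cdots$ is strictly increasing, which is impossible in the Noetherian finitely generated abelian group $\pic(\mathscr{X})$, forcing termination after finitely many steps.

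The central step is the strict inclusion. Each $\tilde D_i$ with $i<k$ is disjoint from $\mathscr{U}_k$ by construction, so the section $\tilde s_i|_{\mathscr{U}_k}$ is nowhere vanishing and trivializes $\cO(\tilde D_i)|_{\mathscr{U}_k}$; in particular the image of $G_k$ in $\pic(\mathscr{U}_k)$ is zero. If $[\cO(\tilde D_k)]$ belonged to $G_k$, then $\cO(\tilde D_k)|_{\mathscr{U}_k}$ would be trivial, and after fixing a trivialization the section $\tilde s_k|_{\mathscr{U}_k}$ would correspond to a regular function $f \in H^0(\mathscr{U}_k, \cO)$ with vanishing locus $D_k$. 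Now schematic triviality of $x$, combined with \Cref{prop: schematically trivial points and local functions}, gives $H^0(\mathscr{U}_k, \cO) = k$, so $f$ is a scalar, hence either vanishes on all of $\mathscr{U}_k$ or nowhere; this contradicts $D_k$ being a proper non-empty closed substack of the irreducible stack $\mathscr{U}_k$.

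The main obstacle I foresee is not any single computation but keeping the divisor-to-line-bundle bookkeeping tight: verifying that closures of prime divisors in the $\mathscr{U}_k$ remain prime Weil divisors on $\mathscr{X}$ and are promoted to Cartier divisors by local factoriality, that the canonical sections behave well under restriction so that $\cO_{\mathscr{X}}(\tilde D_k)|_{\mathscr{U}_k} \cong \cO_{\mathscr{U}_k}(D_k)$ compatibly with the sections, and that \Cref{prop: schematically trivial points and local functions} can be applied to each $\mathscr{U}_k$ (which it can, since $x$ lies in each $\mathscr{U}_k$ by construction). Once these compatibilities are recorded, the Noetherian termination argument runs automatically and delivers the desired open neighborhood.
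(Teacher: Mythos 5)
Your proof is correct and follows essentially the same route as the paper: local factoriality turns each prime divisor missing $x$ into the zero locus of a section of a line bundle, finite generation of $\pic(\mathscr{X})$ forces a dependency among these classes, and \Cref{prop: schematically trivial points and local functions} converts that dependency into a non-constant regular function on a neighborhood of $x$, a contradiction. The only difference is organizational: the paper extracts countably many such divisors at once and uses a single relation $\bigotimes_{i\in I} L_i^{\otimes n_i} \cong \bigotimes_{j\in J} L_j^{\otimes n_j}$ to produce a line bundle with $h^0 \geq 2$ that becomes trivial near $x$, whereas you run an ascending chain of subgroups of $\pic(\mathscr{X})$ and stop at the first dependency; both versions are valid.
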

\begin{proof}
    We already know that local uniform base points are schematically trivial 
    by \Cref{lemma: ubp implies schematically trivial}.
    To show the converse, suppose that $x$ is schematically trivial.
    Assume by way of contradiction that $x$ was not a local uniform base point. 
    Then there exist countably many pairwise distinct prime divisors 
    $\mathscr{Z}_i$ for $i\in\mathbb{N}$ on $\mathscr{X}$, not containing $x$.
    As $\mathscr{X}$ is locally factorial, 
    each $\mathscr{Z}_i$ is the vanishing locus of some global section $s_i$ of a line bundle $L_i$ on $\mathscr{X}$. 

    By assumption $\pic(\mathscr{X})$ is a finitely generated abelian group. 
    Thus, there is a non-trivial relation among these line bundles $L_i$, say
    \[
        L:=\bigotimes_{i \in I} L_i^{\otimes n_i} \cong \bigotimes_{j \in J} L_j^{\otimes n_j},
    \]
    for some finite disjoint subsets $I,J \subset \mathbb{N}$ and integers $n_i,n_j>0$.
    Note that the line bundle $L$ admits two global sections $\otimes_{i \in I} s_i^{\otimes n_i}$ and $\otimes_{j \in J} s_j^{\otimes n_j}$ cutting out distinct effective Cartier divisors on $\mathscr{X}$, so they are linearly independent in $H^0(\mathscr{X},L)$. This shows that $h^0(\mathscr{X},L)\geq 2$.
    By construction $L$ becomes trivial on the open neighborhood 
    $\mathscr{U}:=\mathscr{X} \setminus \bigcup_{i \in I} \mathscr{Z}_i$ of $x$. 
    We conclude $h^0(\mathscr{U},\mathcal{O}_{\mathscr{U}})=h^0(\mathscr{U},L|_{\mathscr{U}})\geq 2$ contradicting the assumption
    of $x$ being schematically trivial by \Cref{prop: schematically trivial points and local functions}.
\end{proof}

\begin{theorem} \label{thm: schematically trivial = local ubp for proper locally factorial}
    Let $X$ be a locally factorial irreducible algebraic space proper over $k$. 
    Let $x \in X(k)$ be a $k$-point.
    Then $x$ is a local uniform base point if and only if $x$ is schematically trivial.
\end{theorem}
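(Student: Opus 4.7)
The forward direction is \Cref{lemma: ubp implies schematically trivial}. For the converse, assume $x$ is schematically trivial. The plan is to reduce to \Cref{thm:main}: local factoriality for an integral algebraic space implies normality, and the remaining hypotheses of \Cref{thm:main} are given by properness, so the entire task reduces to verifying that $\pic(X)$ is finitely generated.

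I would establish finite generation of $\pic(X)$ in two steps. First, by the theorem of the base for proper algebraic spaces over an algebraically closed field, the N\'eron--Severi group $\ns(X) = \pic(X)/\pic^0(X)$ is finitely generated. Second, I would show that $\pic^0(X)$ is zero-dimensional, so that $\pic^0(X)(k)$ is finite and $\pic(X)$ itself becomes finitely generated. For this I would appeal to the Albanese variety: since $X$ is normal integral proper, there is an Albanese morphism $\alpha \colon X \to \mathrm{Alb}(X)$ to an abelian variety (hence a scheme) dual to $\pic^0(X)_{\mathrm{red}}$, universal among morphisms from $X$ to abelian varieties. If $\dim \pic^0(X) > 0$, then $\mathrm{Alb}(X)$ is positive dimensional and $\alpha$ is non-constant. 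But schematic triviality of $x$ forces the restriction of $\alpha$ to any Zariski open neighborhood $U \ni x$ to factor through a single $k$-point of $\mathrm{Alb}(X)$; combined with the integrality of $X$ and the separatedness of $\mathrm{Alb}(X)$, this forces $\alpha$ to be constant on all of $X$, a contradiction. Therefore $\dim \pic^0(X) = 0$, $\pic(X)$ is finitely generated, and \Cref{thm:main} delivers the conclusion.

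The main obstacle I anticipate is the invocation of the theorem of the base and of the Albanese variety in the generality of a normal proper integral algebraic space, rather than the classical projective variety setting. If direct references are not at hand, a backup would be to use Chow's lemma for algebraic spaces to reduce to the case of a proper scheme dominating $X$, and then to transfer the relevant finiteness properties along the resulting morphism; alternatively, one can avoid the Albanese entirely and work directly with the Poincar\'e bundle on $X \times \pic^0(X)_{\mathrm{red}}$ to extract, from a positive-dimensional family of non-isomorphic line bundles, a non-constant morphism from a Zariski neighborhood of $x$ to a scheme (for instance via Stein factorization of a divisorial contraction), which would again contradict the schematic triviality of $x$.
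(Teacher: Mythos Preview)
Your proposal is correct and follows essentially the same route as the paper: reduce to \Cref{thm:main} by proving $\pic(X)$ is finitely generated via the theorem of the base together with the vanishing of $\pic^0(X)$. The paper argues the latter directly with the Poincar\'e bundle on $X \times \underline{\pic}^0(X)_{\red}$ (exactly your backup plan) rather than through the Albanese, obtaining that $\pic^0(X)$ is in fact trivial rather than merely finite; the references you were worried about for algebraic spaces are supplied in the paper by Brochard's work on Picard functors of proper algebraic spaces.
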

\begin{proof}
    By \Cref{thm:main}, it suffices to show that $\pic(X)$ is finitely
    generated. By the theorem of the base for proper algebraic spaces, see
    \cite[Theorem 3.4.1]{brochard-finiteness}, we know that the N\'eron-Severi group
    $\pic(X)/\pic^0(X)$ is finitely generated. Hence, it suffices to show that
    $\pic^0(X)$ is trivial.

    Since $X$ is normal, irreducible, and proper, the Picard functor
    $\underline{\pic}^0(X)$ is represented by a proper group scheme over $k$,
    see \cite[Th\'eor\`eme 4.3.1]{brochard-proper}.
    Therefore its reduction $\underline{\pic}^0(X)_{\red}$ is an abelian variety. The choice of the base-point $x \in X(k)$ yields a Poincar\'e bundle $L$ on $X \times \underline{\pic}^0(X)_{\red}$, which defines a morphism $X \to \underline{\pic}^0(X)_{\red}^{\vee}$ to the dual abelian variety. Since $x$ is schematically trivial, the morphism $X \to \underline{\pic}^0(X)_{\red}^{\vee}$ factors through a $k$-point of $\underline{\pic}^0(X)_{\red}^{\vee}$, which by construction must correspond to the trivial line bundle on $\underline{\pic}^0(X)_{\red}^{\vee}$. We conclude that the Poincar\'e bundle $L$ on $X \times \underline{\pic}^0(X)_{\red}$ is the pullback of a line bundle from $X$, which must be the trivial line bundle. By the universality of $L$, this means that $\cO_X$ is the unique line bundle in $\pic^0(X)$, as desired.
\end{proof}

\begin{remark}
    Using \Cref{prop: schematically trivial points and local functions}, we can interpret \Cref{thm: schematically trivial = local ubp for proper locally factorial} as follows: for a locally factorial proper
    algebraic space the Zariski stalk around a point carries the
    information for the point to be a local uniform base point.
\end{remark}

\subsection{The case of algebraic spaces with mild singularities}

\begin{proposition} \label{prop: mild singularities}
    Let $X$ be an integral separated algebraic space of finite type over $k$, and suppose that $k$ has characteristic $0$. Let $x \in X(k)$ be a $k$-point. Assume that at least one of the following holds:
    \begin{enumerate}[(i)]
        \item $X$ is smooth at $x$ or has a normal isolated singularity at $x$.
        \item $X$ is proper, normal and there is a birational proper morphism $\pi: Y \to X$ such that $Y$ is smooth and $\mathbb{R}^1\pi_*\cO_Y =0$.
        \item $X$ is proper and has rational singularities (i.e., for any surjective \'etale morphism $U \to X$ from a scheme $U$, we have that $U$ has rational singularities).
    \end{enumerate}
    Then, $x$ is a local uniform base point if and only if $x$ is schematically trivial.
\end{proposition}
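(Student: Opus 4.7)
Plan of proof. The forward direction ``local uniform base point implies schematically trivial'' is \Cref{lemma: ubp implies schematically trivial} in all three cases, so I focus on the converse. Following the template of \Cref{thm:main}, suppose $x$ is schematically trivial but not a local uniform base point, so that there exist countably many pairwise distinct prime divisors $Z_i$ on $X$ not containing $x$. I plan to produce a smooth proper algebraic space $Y$ with $\pic(Y)$ finitely generated, on which the $Z_i$ yield line bundles $L_i$; a non-trivial relation $\bigotimes_{i \in I} L_i^{\otimes n_i} \cong \bigotimes_{j \in J} L_j^{\otimes n_j}$ (for finite disjoint $I, J$) will give two linearly independent sections of a line bundle $L$ on $Y$ which, after restricting to the neighborhood $V := X \setminus \bigcup_{i \in I \cup J} Z_i$ of $x$ where $L|_V$ becomes trivial, contradict $H^0(V, \mathcal{O}_V) = k$ (\Cref{prop: schematically trivial points and local functions}). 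The main input in each case is the construction of $Y$ and the vanishing $\underline{\pic}^0(Y) = 0$.

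Cases (ii) and (iii). Let $\pi \colon Y \to X$ be the resolution given in (ii), or any Hironaka resolution in (iii). In both cases $\pi_* \mathcal{O}_Y = \mathcal{O}_X$ by normality of $X$ and $R^1 \pi_* \mathcal{O}_Y = 0$ (by hypothesis in (ii); from the rational singularity condition in (iii)), so the Leray spectral sequence yields $H^1(X, \mathcal{O}_X) \cong H^1(Y, \mathcal{O}_Y)$. Since $X$ itself is proper and normal, the Poincar\'e bundle argument of \Cref{thm: schematically trivial = local ubp for proper locally factorial} applies: schematic triviality of $x$ forces $X \to (\underline{\pic}^0(X)_\red)^\vee$ to factor through a $k$-point, so $\underline{\pic}^0(X)_\red = 0$. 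In characteristic $0$, $\underline{\pic}^0(X)$ is smooth (Cartier), hence equal to its reduction, so its tangent space $H^1(X, \mathcal{O}_X)$ vanishes. Via the Leray isomorphism, $H^1(Y, \mathcal{O}_Y) = 0$ too, giving $\underline{\pic}^0(Y) = 0$. Combined with Brochard's theorem of the base for the smooth proper $Y$, $\pic(Y)$ is finitely generated.

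Case (i). Here $X$ need not be proper, so I first compactify via Nagata for algebraic spaces and then apply resolution of singularities (both valid in characteristic $0$) to produce a smooth proper algebraic space $Y$ together with an open embedding $X \setminus \{x\} \hookrightarrow Y$; in subcase (a) I arrange the resolution to be an isomorphism over a neighborhood of $x$, so $x \in Y$. To show $\underline{\pic}^0(Y) = 0$, I use the Albanese map $\mathrm{alb} \colon Y \to A$, where $A$ is the abelian variety dual to $\underline{\pic}^0(Y)$ (existing in characteristic $0$). By normality of $X$ at $x$ and properness of the scheme $A$, the restriction $X \setminus \{x\} \to A$ extends to a regular morphism $\phi \colon X \to A$ via the classical extension of rational maps to abelian varieties across a codimension $\geq 2$ locus; the case $\dim X = 1$ reduces to subcase (a) since a one-dimensional isolated normal singularity is automatically smooth. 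Schematic triviality of $x$ then makes $\phi$ constant on some neighborhood $V \ni x$, which is dense in $Y$, so $\mathrm{alb}$ is constant on $Y$ by separatedness of $A$; the universal property of the Albanese yields $A = 0$, and hence $\pic(Y)$ is finitely generated.

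Conclusion and obstacles. Once $\pic(Y)$ is finitely generated in each case, the argument of \Cref{thm:main} applies: the closures $\bar Z_i \subset Y$ give line bundles satisfying a non-trivial relation, producing two linearly independent sections of some $L$ on $Y$. Restricting these (and extending via Hartogs's lemma in subcase (i)(b)) to $V \ni x$, the trivialization $L|_V \cong \mathcal{O}_V$ gives two linearly independent global functions on $V$, contradicting schematic triviality. The main technical obstacles I foresee are (a) invoking Nagata compactification and resolution of singularities rigorously for algebraic spaces in case (i); (b) making precise the extension of the Albanese across the singular point in subcase (i)(b); and (c) checking that sections of $L$ on $Y$ descend (or restrict-and-extend) correctly to sections on $V \subseteq X$ when $\pi$ has a positive-dimensional exceptional fiber over $x$.
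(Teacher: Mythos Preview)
Your treatment of cases (ii) and (iii) matches the paper's proof essentially verbatim: the Leray spectral sequence combined with $R^1\pi_*\cO_Y=0$ transports the vanishing of $H^1(X,\cO_X)$ (obtained from the Poincar\'e bundle argument on the proper normal $X$) to $H^1(Y,\cO_Y)$, whence $\pic^0(Y)=0$ in characteristic $0$ and the theorem of the base finishes.

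In case (i), however, your route diverges from the paper's and has a genuine gap in subcase (b). You pass to a \emph{smooth} proper $Y$ resolving the singularity at $x$, and then try to show $\pic^0(Y)=0$ by extending $\mathrm{alb}_Y|_{X\setminus\{x\}}$ across $x$ using ``the classical extension of rational maps to abelian varieties across a codimension $\geq 2$ locus''. That extension principle is Weil's theorem for \emph{smooth} sources; for merely normal sources it is false. The cone $C$ over an elliptic curve $E$ (with respect to a degree $\geq 2$ line bundle) is a normal surface whose resolution has Albanese $E$, and the induced map $C\setminus\{\text{vertex}\}\to E$ does not extend across the vertex. So your obstacle (b) is not a matter of precision but an actual failure of the argument; without it you cannot conclude $\pic^0(Y)=0$, and indeed $h^1(Y,\cO_Y)$ can be strictly larger than $h^1(\overline{X},\cO_{\overline{X}})$ when the singularity is not rational.

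The paper sidesteps this entirely by \emph{not} resolving at $x$: after Nagata compactification and normalization, it resolves only the singularities away from $x$, producing a proper normal $\overline{X}$ that is smooth on $\overline{X}\setminus\{x\}$ and still contains $x$. Then the Poincar\'e bundle argument of \Cref{thm: schematically trivial = local ubp for proper locally factorial} applies directly to $\overline{X}$ (since $x\in\overline{X}$), giving $\pic^0(\overline{X})=0$ and hence $\pic(\overline{X})$ finitely generated. Finally, because $\overline{X}$ is smooth away from $x$, every prime divisor $Z_i$ not containing $x$ is already Cartier on $\overline{X}$, so the contradiction argument of \Cref{thm:main} runs on $\overline{X}$ itself, with no need to transport sections between $Y$ and $X$.
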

\begin{proof}
By \Cref{lemma: ubp implies schematically trivial},
it suffices to show that if $x$ is schematically trivial, then $x$ is a local
uniform base point.

\noindent \textit{Case (i):}  Since the statement is local on $X$, we may use Nagata
compactification \cite[Theorem 1.2.1]{nagata_compactification_clo} 
to assume that $X$ is proper and normal.
In both cases we
may resolve the singularities away from $x$ and assume that $X$ is smooth away
from $x$ since the characteristic of the ground field is $0$. 

The same argument as in \Cref{thm: schematically trivial = local ubp
for proper locally factorial} shows that $\pic(X)$ is finitely generated. Now we
may apply the same argument by contradiction as in the proof of \Cref{thm:main},
using the fact that any effective Weil divisor that does not contain $x$ is cut
out by a global section of a line bundle.

\noindent \textit{Case (ii):}
We first show that if $x$ is schematically trivial then 
$\pic^0(Y)$ is trivial. Indeed, we have already seen that if $X$
has a schematically trivial point, then $\pic^0(X)$ is trivial
via the Albanese morphism, using that $\pic^0(X)_{\red}$ is an abelian variety. 
In characteristic $0$, we have that $\pic^0(X)=\pic^0(X)_{\red}$ and an analogous equality for $Y$.
Consider the morphism of abelian varieties $\pi^{\ast}:\pic^0(X) \to \pic^0(Y)$ induced by pulling back line bundles. The induced morphism on tangent spaces at the identity 
coincides with the natural map $0=H^1(X,\mathcal{O}_X)\to H^1(Y,\mathcal{O}_Y)$.

Via the Leray spectral sequence and the vanishing of $\mathbb{R}^1\pi_{\ast}\mathcal{O}_Y$,
we conclude that $H^1(X,\mathcal{O}_X)\cong H^1(Y,\mathcal{O}_Y)$.
Since $\pic^0(Y)$ is smooth of dimension $h^1(Y,\mathcal{O}_Y)=0$ we conclude
the desired triviality.

Note that the birational morphism $\pi$ is an isomorphism over a big open $U \subseteq X$, 
as we can extend the inverse to all codimension $1$ points of the normal
space $X$ using the valuative criterion of properness.

To show that $x$ is a local uniform base point we assume by way of contradiction
that $x$ is not contained in countably many prime divisors $Z_i$ for $i \in \mathbb{N}$ on $X$.
Consider the underlying reduced space $\tilde Z_i$ of the strict transform of $Z_i$ in $Y$. This coincides with equipping the closure of $Z_i \cap U$ in $Y$ with its reduced subspace structure. Since $Y$ is smooth, each $\tilde Z_i$ is the vanishing locus of a global section $s_i$ of a line bundle $L_i$ on $Y$. 

Applying the theorem of the base to $Y$ we obtain a non-trivial relation 
among these line bundles $L_i$, say  
\[
L:=\bigotimes_{i \in I} L_i^{\otimes n_i} \cong \bigotimes_{j \in J} L_j^{\otimes n_j},
\]
for some finite disjoint subsets $I,J \subset \mathbb{N}$ and integers
$n_i,n_j\geq 1$. As in the proof of \Cref{thm:main} we have that $h^0(Y,L) \geq 2$ and hence $h^0(X,j_*L_{|U}) \geq 2$, where $j:U\to X$ is the open immersion. By construction $j_*L_{|U}$ is trivial on the open neighborhood $V:=X\setminus \bigcup_{i \in I}Z_i$ of $x$. Since $V\cap U \subseteq V$ is big open, we conclude 
\[
h^0(V,\mathcal{O}_V)=h^0(V,(j_*L_{|U})_{|V})=h^0(V\cap U,L_{|U}) \geq h^0(X,j_*L_{|U}) \geq 2,
\]
contradicting the assumption of $x$ being schematically trivial by \Cref{prop: schematically trivial points and local functions}.

\noindent \textit{Case (iii):}
This follows directly from (ii) since $X$ having rational singularities implies
the existence of
a resolution as in (ii).
\end{proof}

\begin{corollary}
    Suppose that $X$ is a separated normal algebraic space of finite type over $k$, and $X$ is equi-dimensional of dimension $2$. Then a $k$-point $x \in X(k)$ is a local uniform base point if and only if it is schematically trivial.
\end{corollary}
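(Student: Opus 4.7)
The plan is to apply \Cref{prop: mild singularities}(i) at every $k$-point of $X$. For this, I need to verify that for any $x \in X(k)$, the algebraic space $X$ is either smooth at $x$ or has a normal isolated singularity at $x$. Once this is established, the equivalence between being a local uniform base point and being schematically trivial follows immediately from the previous proposition.

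Since $X$ is normal, Serre's criterion $(R_1)$ ensures that $X$ is regular in codimension $1$, so the singular locus $X_{\mathrm{sing}} \subseteq X$ is a closed subset of codimension at least $2$. The equidimensionality of $X$ together with $\dim X = 2$ then forces $\dim X_{\mathrm{sing}} \leq 0$. Consequently $X_{\mathrm{sing}}$ is a (locally finite) discrete set of closed points, and every singular point of $X$ is isolated in $X_{\mathrm{sing}}$. Being isolated points of a normal space, such singularities are automatically normal in the sense required by \Cref{prop: mild singularities}(i).

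Therefore each $k$-point $x \in X(k)$ is either smooth or an isolated normal singularity of $X$, and in either case \Cref{prop: mild singularities}(i) applies, yielding the equivalence between $x$ being a local uniform base point and $x$ being schematically trivial. (The characteristic $0$ hypothesis is inherited from \Cref{prop: mild singularities}.) There is essentially no obstacle in the argument: dimension $2$ is precisely the threshold at which the normality hypothesis forces singularities to be isolated, so the corollary is a direct pointwise application of the smooth-or-isolated-normal-singularity case of the preceding proposition.
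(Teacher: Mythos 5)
Your proposal is correct and follows exactly the paper's own (one-line) argument: the corollary is deduced by observing that normality in dimension $2$ forces every point to be either smooth or an isolated normal singularity, so that \Cref{prop: mild singularities}(i) applies pointwise. Your additional remark that the characteristic-$0$ hypothesis is inherited from that proposition is a fair observation, since the corollary's statement omits it while the proof depends on it.
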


\begin{proof}
    This is an immediate consequence of \Cref{prop: mild singularities}, since all points are either smooth or isolated singularities.
\end{proof}

\section{Local geometry of stacks via maps to schemes} 
\label{section: local geometry via maps to schemes}

Let $\mathscr{X}$ be an integral quasi-separated algebraic stack
locally of finite type over $k$, and let $x \in |\mathscr{X}|$.
In this section we introduce some local invariants of the stack $\mathscr{X}$ at $x$
measuring the unschematicness of $\mathscr{X}$ at $x$.
These invariants are the \emph{schematic dimension} and the \emph{schematic fiber}, 
see \Cref{def: schematic dimension} and \Cref{def: schematic fiber} respectively.
Both provide a way to stratify $\mathscr{X}$, 
see \Cref{prop: stratification by schematic dimension}.

The schematic fiber at $x$ can be seen as an ``excess intersection'' of Weil-divisors
that are Cartier at $x$, \Cref{prop:schematic fiber as base locus},
and provides an intrinsic way to distinguish an algebraic space
from a scheme by its geometry, \Cref{prop: characterization of algebraic spaces} and 
\Cref{thm: reason why algebraic space at a point local}. 
As the geometry of an algebraic stack admitting a moduli space
already encodes the geometry of the moduli space, this allows for a criterion
to check the schematicness of a separated moduli space: 
the moduli space is schematic at the image of a closed point $x$ of $\mathscr{X}$ 
if and only if the schematic fiber at $x$ has $x$ as its unique closed point, 
\Cref{prop: schematic points for adequate moduli spaces}.

\subsection{Schematic dimension and schematic fibers} 
\label{subsection: schematic dimension and fibers}

\begin{definition}[Finite type algebra at a point]
\label{defn: finite type algebra at x}
    Let $\mathscr{X}$ be an integral quasi-sepa\-rated algebraic stack locally
    of finite type over $k$.
    A \emph{finite type algebra at} a geometric point $x \in |\mathscr{X}|$ is the data of a pair $(\mathscr{U}, A)$, 
    where $\mathscr{U} \subseteq \mathscr{X}$ is an open substack containing $x$ 
    and $A \subseteq H^0(\mathscr{U}, \cO_{\mathscr{X}})$ is a finitely generated
    $k$-subalgebra. 

    Notice that $A$ is always an integral domain. 
    For any finite type algebra $(\mathscr{U}, A)$ at $x$, 
    we denote by $f_A: \mathscr{U} \to \Spec(A)$ the induced schematically dominant morphism.
\end{definition}

\begin{lemma}[Dimension bound for finite type algebras] \label{lemma: dimension bound}
    Let $\pi: X \to \mathscr{X}$ be a schematically dominant smooth morphism from an
    integral algebraic space $X$ of finite type
    to an integral quasi-separated algebraic stack $\mathscr{X}$ locally of finite
    type over $k$.
    For any finite type algebra $(\mathscr{U},A)$ at $x \in |\mathscr{X}|$,
    we have $\dim(\Spec(A)) \leq \dim(X)$.
\end{lemma}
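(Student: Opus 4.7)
The plan is to pull back the schematically dominant morphism $f_A \colon \mathscr{U} \to \Spec(A)$ along $\pi$ so as to reduce the bound to a statement about integral finite type $k$-spaces, where the Krull dimension equals the transcendence degree of the function field over $k$. Set $X_{\mathscr{U}} := \pi^{-1}(\mathscr{U}) \subseteq X$. Since $\pi$ is schematically dominant and both source and target are integral, $\pi$ sends the generic point $\eta_X$ of $X$ to the generic point of $\mathscr{X}$, which is contained in every non-empty open substack. Hence $X_{\mathscr{U}}$ is a non-empty open subspace of $X$; in particular it is integral with $K(X_{\mathscr{U}}) = K(X)$.

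Next I would argue that the composition $g := f_A \circ \pi|_{X_{\mathscr{U}}} \colon X_{\mathscr{U}} \to \Spec(A)$ is schematically dominant. The restriction $\pi|_{X_{\mathscr{U}}} \colon X_{\mathscr{U}} \to \mathscr{U}$ is obtained by base change along the open immersion $\mathscr{U} \hookrightarrow \mathscr{X}$, and restricting the injection of sheaves $\mathcal{O}_{\mathscr{X}} \hookrightarrow \pi_*\mathcal{O}_X$ to $\mathscr{U}$ keeps it injective, so $\pi|_{X_{\mathscr{U}}}$ remains schematically dominant. By definition of a finite type algebra at $x$, the inclusion $A \hookrightarrow H^0(\mathscr{U}, \mathcal{O}_{\mathscr{U}})$ exhibits $f_A$ as schematically dominant as well. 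Since the three involved stacks are integral, schematic dominance amounts to the generic point being sent to the generic point, a condition that is stable under composition.

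From the schematic dominance of $g$ it follows that the induced ring map $A \to H^0(X_{\mathscr{U}}, \mathcal{O}_{X_{\mathscr{U}}})$ is injective, and therefore extends to an injection of function fields $K(A) \hookrightarrow K(X_{\mathscr{U}}) = K(X)$. Comparing Krull dimensions via the standard transcendence-degree formula for integral finite type $k$-spaces yields
\[
\dim(\Spec(A)) \;=\; \mathrm{tr.deg}_k\, K(A) \;\leq\; \mathrm{tr.deg}_k\, K(X) \;=\; \dim(X),
\]
which is the desired inequality. The only technical input needed is the stability of schematic dominance under restriction to open substacks and under composition of morphisms of integral stacks; no substantial obstacle is anticipated, and I expect the write-up to be short.
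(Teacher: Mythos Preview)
Your proposal is correct and follows the same approach as the paper: form the composition $\pi^{-1}(\mathscr{U}) \to \mathscr{U} \to \Spec(A)$, observe that it is schematically dominant, and deduce the dimension bound. The paper compresses all of this into a single sentence, whereas you have unpacked the details (non-emptiness of $\pi^{-1}(\mathscr{U})$, stability of schematic dominance under restriction and composition, and the transcendence-degree comparison), but the underlying argument is identical.
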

\begin{proof}
    We have a schematically dominant morphism $\pi^{-1}(\mathscr{U}) \to \mathscr{U} \to \Spec(A)$,
    and therefore $\dim(\Spec(A)) \leq \dim(\pi^{-1}(\mathscr{U})) = \dim(X)$.
\end{proof}

\begin{definition}[Schematic dimension]
\label{def: schematic dimension}
    Given an integral quasi-separated
    algebraic stack $\mathscr{X}$ locally of finite type over $k$ and a geometric point $x \in |\mathscr{X}|$, the 
    \emph{schematic dimension} $\Sdim_x(\mathscr{X})$ of $\mathscr{X}$ at $x$ is defined to be the maximum
    \[ 
        \Sdim_x(\mathscr{X}) := \underset{(\mathscr{U}, A)}{\max} \dim(\Spec(A)),
    \]
    where the maximum ranges over all finite type algebras $(\mathscr{U},A)$ at $x$. This maximum is achieved, by \Cref{lemma:
    dimension bound}.
\end{definition}

\begin{example}
    A geometric point $x\in |\mathscr{X}|$ has schematic dimension $0$ 
    if and only if $x$ is schematically trivial.
\end{example}

Since $\mathscr{X}$ is integral, given any two open neighborhoods 
$\mathscr{U} \subseteq \mathscr{V}$ of $x$, restriction induces an injection of rings
$H^0(\mathscr{V}, \cO_{\mathscr{X}}) \hookrightarrow H^0(\mathscr{U}, \cO_\mathscr{X})$. 
In particular, for any finite type algebra $(\mathscr{V},A)$
at $x$, we have a canonical restriction $(\mathscr{U}, A)$, where $A$ is the
same abstract $k$-algebra. 

Notice that the system of finite type algebras $(\mathscr{U}, A)$ is filtered in
the following sense. For any two finite type algebras $(\mathscr{U}, A)$ and
$(\mathscr{U}', B)$ at $x \in |\mathscr{X}|$, we may consider the ring
$C \subseteq H^0(\mathscr{U} \cap \mathscr{U}', \cO_{\mathscr{X}})$ generated by the
restrictions of $A$ and $B$ to $\mathscr{U}\cap \mathscr{U}'$. 
We therefore obtain factorizations of schematically dominant morphisms 
\[f_A|_{\mathscr{U} \cap \mathscr{U}'}: \mathscr{U} \cap \mathscr{U}' \xrightarrow{f_{C}}  \Spec(C) \to \Spec(A)\]
\[f_B|_{\mathscr{U} \cap \mathscr{U}'}: \mathscr{U} \cap \mathscr{U}' \xrightarrow{f_{C}} \Spec(C) \to \Spec(B).\]
In particular, we have inclusions of rings of fractions $\Frac(A), \Frac(B) \subseteq \Frac(C)$ for the integral
domains $A,B,C$. Consider the filtered union 
$\bigcup_{(\mathscr{U}, A)} \Frac(A)$
over all finite type algebras at $x$. It turns out that this union stabilizes.
\begin{proposition}
    Let $\mathscr{X}$ be an integral quasi-separated algebraic stack locally of
    finite type over $k$, and let $x \in |\mathscr{X}|$ be a geometric point. Then there is a finite type algebra $(\mathscr{V}, B)$ 
    at $x \in |\mathscr{X}|$ with maximal fraction field, i.e., 
    such that we have $\Frac(B) = \bigcup_{(\mathscr{U}, A)} \Frac(A)$.
\end{proposition}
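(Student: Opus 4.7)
The plan is to identify $\bigcup_{(\mathscr{U}, A)} \Frac(A)$ with the fraction field of a natural colimit ring, prove that this fraction field is a finitely generated extension of $k$ by comparing with the function field of a smooth atlas, and then extract a single finite type algebra whose fraction field exhausts it.

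First, I would consider the filtered colimit
\[
    R := \colim_{x \in \mathscr{U} \subseteq \mathscr{X}} H^0(\mathscr{U}, \cO_{\mathscr{U}}),
\]
with transition maps given by restriction. Since $\mathscr{X}$ is integral, every restriction $H^0(\mathscr{V}, \cO) \hookrightarrow H^0(\mathscr{U}, \cO)$ is injective, so $R$ is an integral domain into which each finite type algebra $A$ from $(\mathscr{U}, A)$ embeds. Any element $a/b \in \Frac(R)$ has numerator and denominator represented on some common open $\mathscr{U}$, so taking $A := k[a,b] \subseteq H^0(\mathscr{U}, \cO_{\mathscr{U}})$ yields $a/b \in \Frac(A)$. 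This identifies $\bigcup_{(\mathscr{U}, A)} \Frac(A) = \Frac(R)$.

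Second, I would bound $\Frac(R)$ inside a function field of finite type. Fix a schematically dominant smooth morphism $\pi: X \to \mathscr{X}$ from an integral algebraic space of finite type, as in \Cref{lemma: dimension bound}. Schematic dominance gives compatible injections $H^0(\mathscr{U}, \cO_{\mathscr{X}}) \hookrightarrow H^0(\pi^{-1}(\mathscr{U}), \cO_X)$; passing to the colimit over neighborhoods of $x$ and then to fraction fields produces an embedding $\Frac(R) \hookrightarrow K(X)$, where $K(X)$ is the function field of $X$. Since $X$ is of finite type over $k$, the field $K(X)$ is finitely generated over $k$, and therefore so is every intermediate field $\Frac(R)$ by the classical theorem that subfields of finitely generated field extensions are finitely generated (see, e.g., Zariski--Samuel, \emph{Commutative Algebra} I, Ch.~II).

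Third, I would use the finite generation to construct $(\mathscr{V}, B)$. Write $\Frac(R) = k(y_1, \ldots, y_n)$, and represent each $y_i = a_i/b_i$ with $a_i, b_i \in H^0(\mathscr{U}_i, \cO_{\mathscr{U}_i})$ for some open neighborhood $\mathscr{U}_i$ of $x$. Setting $\mathscr{V} := \bigcap_i \mathscr{U}_i$ and letting $B \subseteq H^0(\mathscr{V}, \cO_{\mathscr{V}})$ be the $k$-subalgebra generated by the restrictions of the $a_i$ and $b_i$ yields a finite type algebra $(\mathscr{V}, B)$ at $x$ with $\Frac(B) \supseteq k(y_1, \ldots, y_n) = \Frac(R)$; the reverse inclusion is automatic.

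The main obstacle is Step 2: the input that every subfield of a finitely generated field extension of $k$ is itself finitely generated is classical but nontrivial. The remaining steps are essentially formal manipulations of colimits and fraction fields, so the whole argument rests on combining that finite generation theorem with the atlas-based embedding of $\Frac(R)$ into $K(X)$.
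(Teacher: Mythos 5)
Your proof is correct and follows essentially the same route as the paper: both embed all the fields $\Frac(A)$ into the function field of a finite type integral scheme (or space) dominating $\mathscr{X}$, invoke the classical fact that $k$-subfields of a finitely generated field extension of $k$ are finitely generated (the paper cites Serre's Albanese expos\'e where you cite Zariski--Samuel), and then collect the finitely many generators into a single finite type algebra $(\mathscr{V},B)$ at $x$. Your intermediate colimit ring $R$ is a harmless repackaging of the paper's filtered union.
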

\begin{proof}
If we denote by $K$ the fraction field of a
finite type integral scheme dominating $\mathscr{X}$, 
then we always have $\Frac(A) \subseteq K$ for all finite type algebras
$(\mathscr{U},A)$ at $x$. 
The $k$-subfields of $K$ are finitely generated extensions of $k$ (see
\cite[Exp. 10, Lem. 1]{serre-albanese}). In particular, the filtered
union $\bigcup_{(\mathscr{U}, A)} \Frac(A)$
is a finitely generated field extension of $k$. All the generators
are contained in $\Frac(B)$ for some finite type algebra $(\mathscr{V},B)$ at $x$
and we conclude $\Frac(B) = \bigcup_{(\mathscr{U}, A)} \Frac(A)$.
\end{proof}

\begin{definition}
    Given an integral quasi-separated algebraic stack $\mathscr{X}$
    locally of finite type over $k$ and a geometric point $x \in |\mathscr{X}|$, 
    the \emph{schematic fraction field} $k_{sch}(\mathscr{X},x)$ of
    $\mathscr{X}$ at $x$ is the unique maximal fraction field $\Frac(A)$ among finite type algebras $(\mathscr{U}, A)$ at $x$. Note that $k_{sch}(\mathscr{X}, x)$ has transcendence degree $\Sdim_x(X)$ over $k$. We say that a morphism $f_A: \mathscr{U} \to \Spec(A)$ is \emph{$x$-birational} if we have $\Frac(A) = k_{sch}(\mathscr{X},x)$.
\end{definition}

\begin{example}
    Suppose that $X$ is a separated algebraic space. 
    If $x\in X(k)$ has schematic dimension equal to $\dim(X)$, then any $x$-birational schematically dominant morphism $f_A: U \to \mathrm{Spec}(A)$ is generically quasi-finite. By Zariski's main theorem \cite[\href{https://stacks.math.columbia.edu/tag/082I}{Tag 082I}]{sp} and the maximality of the fraction field $\Frac(A)$, it follows that $f_A: U \to \Spec(A)$ is a birational morphism.
\end{example}

Suppose that $x \in \mathscr{X}(k)$. Let $f_A: \mathscr{U} \to \Spec(A)$ be an $x$-birational morphism. The unique closed fiber $f_A^{-1}(f_A(x)) \subseteq \mathscr{U}$ containing $x$ is a locally closed substack of $\mathscr{X}$, and so we may take its schematic closure $\overline{f_A^{-1}(f_A(x))} \subseteq \mathscr{X}$,
i.e., the scheme theoretic image of $f_A^{-1}(f_A(x))\to\mathscr{X}$.
By Noetherian induction applied to any fixed quasi-compact open neighborhood of $x$, 
as we range over all finite type
algebras $(\mathscr{U}, A)$ at $x$, there is a unique minimal closed substack 
of the form $\overline{f_A^{-1}(f_A(x))} \subseteq \mathscr{X}$.

\begin{definition}
\label{def: schematic fiber}
    Given an integral quasi-separated algebraic stack $\mathscr{X}$
    locally of finite type over $k$ and a geometric 
    point $x \in \mathscr{X}(k)$, we define the
    \emph{schematic fiber} $\Fib_x(\mathscr{X}) \subseteq \mathscr{X}$ 
    of $\mathscr{X}$ at $x$ 
    as the unique minimal closed substack $\overline{f_A^{-1}(f_A(x))} \subseteq \mathscr{X}$ 
    as we range over all finite type algebras $(\mathscr{U},A)$ at $x$.
\end{definition}

By definition, for a morphism $f_A: \mathscr{U} \to \Spec(A)$, we have that $f_A(\mathscr{U} \cap \Fib_x(\mathscr{X}))$ is a $k$-point of $\Spec(A)$. Therefore, we may think of $\Fib_x(\mathscr{X})$ as consisting roughly of the closure of all the $k$-points of $\mathscr{X}$ that cannot be locally differentiated from $x$ by mapping to a scheme. 

\begin{remark} \label{remark: connectedness of schematic fiber}
    By the minimality of $\Fib_x(\mathscr{X})$, for any open substack $\mathscr{U}\subseteq \mathscr{X}$ containing $x$, we have that the open $\Fib_x(\mathscr{X}) \cap \mathscr{U}$ is schematically dense in $\Fib_x(\mathscr{X})$. In particular, it follows that every irreducible component of $\Fib_x(\mathscr{X})$ contains $x$, and hence $\Fib_x(\mathscr{X})$ is connected.
\end{remark}

\begin{example}
    We have $\Sdim_x(\mathscr{X})=0$ if and only if $\Fib_x(\mathscr{X}) = \mathscr{X}$.
\end{example}

The local invariants we have introduced satisfy some semi-continuity properties.
In particular, they provide a way to canonically stratify an algebraic stack by
studying its local morphisms to schemes.
\begin{proposition}[Semi-continuity of schematic (fiber) dimension] 
\label{prop: stratification by schematic dimension}
    Let $\mathscr{X}$ be an integral quasi-separated algebraic stack locally of finite type over $k$. Then, the schematic dimension function
    \[ x \in \mathscr{X}(k) \mapsto \Sdim_x(\mathscr{X})\]
    is lower semi-continuous. On the other hand, the schematic fiber dimension function
    \[ x \in \mathscr{X}(k) \mapsto \dim(\Fib_x(\mathscr{X}))\]
    is upper semi-continuous.
\end{proposition}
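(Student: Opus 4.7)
The plan is to handle the two semi-continuity claims separately, as they have different flavors. For the lower semi-continuity of the schematic dimension, the argument is essentially tautological. If $(\mathscr{U}, A)$ is a finite type algebra at $x \in \mathscr{X}(k)$ realizing the maximum $\Sdim_x(\mathscr{X}) = \dim(\Spec A)$, then the very same pair $(\mathscr{U}, A)$ is a finite type algebra at every $x' \in \mathscr{U}(k)$. Therefore $\Sdim_{x'}(\mathscr{X}) \geq \dim(\Spec A) = \Sdim_x(\mathscr{X})$ on the open neighborhood $\mathscr{U}$ of $x$, which yields openness of the locus $\{x \in \mathscr{X}(k) : \Sdim_x(\mathscr{X}) \geq n\}$ for every $n$.

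For the upper semi-continuity of $\dim \Fib_x(\mathscr{X})$, I would fix $x \in \mathscr{X}(k)$ and pick a finite type algebra $(\mathscr{U}, A)$ realizing the schematic fiber, so that $\Fib_x(\mathscr{X}) = \overline{f_A^{-1}(f_A(x))}$. For any $x' \in \mathscr{U}(k)$ this same pair is a finite type algebra at $x'$, so minimality in the definition of $\Fib_{x'}(\mathscr{X})$ gives the inclusion
\[
    \Fib_{x'}(\mathscr{X}) \;\subseteq\; \overline{f_A^{-1}(f_A(x'))}.
\]
By \Cref{remark: connectedness of schematic fiber}, every irreducible component of $\Fib_{x'}(\mathscr{X})$ contains $x'$, and since $f_A^{-1}(f_A(x'))$ is open and dense in its closure inside $\mathscr{X}$, one obtains the key inequality
\[
    \dim(\Fib_{x'}(\mathscr{X})) \;\leq\; \dim_{x'}\bigl(f_A^{-1}(f_A(x'))\bigr).
\]
The same reasoning at $x$ gives $\dim(\Fib_x(\mathscr{X})) = \dim_x(f_A^{-1}(f_A(x)))$.

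The remaining step is to invoke the upper semi-continuity of the local fiber dimension for the morphism $f_A : \mathscr{U} \to \Spec(A)$. After shrinking $\mathscr{U}$ to a quasi-compact open neighborhood of $x$, this becomes a morphism of finite type, and the semi-continuity reduces to the scheme case by pulling back along a smooth scheme atlas (cf. \cite[\href{https://stacks.math.columbia.edu/tag/0DS4}{Tag 0DS4}]{sp}), using that smooth morphisms have equidimensional fibers so local fiber dimension behaves predictably under smooth pullback. This furnishes an open neighborhood of $x$ on which $\dim_{x'}(f_A^{-1}(f_A(x'))) \leq \dim_x(f_A^{-1}(f_A(x)))$, and combining with the two inequalities above concludes $\dim(\Fib_{x'}(\mathscr{X})) \leq \dim(\Fib_x(\mathscr{X}))$ on a neighborhood of $x$. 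The main technical step is precisely this transfer of semi-continuity to the stack setting; everything else is bookkeeping guided by the connectedness of the schematic fiber.
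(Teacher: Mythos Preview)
Your proof is correct and follows essentially the same approach as the paper. The paper's argument is identical in structure: for a fixed $x$ it takes a single finite type algebra $(\mathscr{U},A)$ witnessing both $\Sdim_x(\mathscr{X})$ and $\Fib_x(\mathscr{X})\cap\mathscr{U}=f_A^{-1}(f_A(x))$, uses the same inequality $\dim(\Fib_y(\mathscr{X}))\leq\dim_y(f_A^{-1}(f_A(y)))$ for nearby $y$, and then invokes upper semi-continuity of local fiber dimension for $f_A$ (citing \cite[Prop.~2.6]{osserman-relative-dimension-stacks} directly for stacks rather than reducing to schemes via an atlas as you do).
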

\begin{proof}
    Let $x\in\mathscr{X}(k)$ be a point with 
    $\Sdim_x(\mathscr{X})=n$
    and $\dim(\Fib_x(\mathscr{X})) = m$.
    Then there is a finite type algebra $(\mathscr{U},A)$ at $x$ such that $f_A: \mathscr{U} \to \Spec(A)$ satisfies $\dim(\Spec(A))=n$ and $f_A^{-1}(f_A(x)) = \Fib_x(\mathscr{X}) \cap \mathscr{U}$ has dimension $m$ at $x$.
    
    The morphism $f_A: \mathscr{U} \to \Spec(A)$ witnesses that every $k$-point of $\mathscr{U}$ has schematic dimension at least $n$. This shows the lower semi-continuity of the sche\-ma\-tic dimension.

    On the other hand, by upper semi-continuity of the fiber dimension 
    \cite[Prop. 2.6]{osserman-relative-dimension-stacks}
    for the morphism $f_A$, there is an open substack $\mathscr{V} \subseteq \mathscr{U}$
    such that every $k$-point $y \in \mathscr{V}(k)$ satisfies $\dim_y(f_A^{-1}(f_A(y)))
    \leq m$. By considering the restriction $f_A: \mathscr{V} \to \Spec(A)$, 
    we see that for any $k$-point $y \in \mathscr{V}(k)$ we have the estimate
    $\dim(\Fib_{y}(\mathscr{X})) \leq \dim_y(f_A^{-1}(f_A(y))) \leq m$. 
    Hence we obtain the upper semi-continuity of the dimension of the sche\-ma\-tic fiber.
\end{proof}

\begin{corollary} \label{corollary: locus of schematically trivial points is closed}
    Let $\mathscr{X}$ be an integral quasi-separated algebraic stack locally of finite type over $k$. Then the locus of sche\-ma\-ti\-cally trivial points is closed.
\end{corollary}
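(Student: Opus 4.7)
The plan is to reduce this directly to the semi-continuity statement from \Cref{prop: stratification by schematic dimension}, which has just been established. Recall from the example immediately following \Cref{def: schematic dimension} that a geometric point $x \in |\mathscr{X}|$ is schematically trivial if and only if $\Sdim_x(\mathscr{X}) = 0$. Therefore, the locus of schematically trivial $k$-points is exactly the set
\[
    \{x \in \mathscr{X}(k) \mid \Sdim_x(\mathscr{X}) = 0\} = \mathscr{X}(k) \setminus \{x \in \mathscr{X}(k) \mid \Sdim_x(\mathscr{X}) \geq 1\}.
\]

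By \Cref{prop: stratification by schematic dimension}, the function $x \mapsto \Sdim_x(\mathscr{X})$ is lower semi-continuous on $\mathscr{X}(k)$, which by the argument in its proof means precisely that for every integer $n$ the locus $\{x : \Sdim_x(\mathscr{X}) \geq n\}$ is open. Indeed, if $\Sdim_x(\mathscr{X}) \geq n$, then there exists a finite type algebra $(\mathscr{U},A)$ at $x$ with $\dim(\Spec(A)) \geq n$, and the same pair $(\mathscr{U},A)$ witnesses $\Sdim_y(\mathscr{X}) \geq n$ for every $y \in \mathscr{U}(k)$. Applying this with $n = 1$, the locus of points with positive schematic dimension is open, so its complement (the schematically trivial locus) is closed.

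Alternatively, one may derive the same conclusion from the upper semi-continuity of the schematic fiber dimension: since $\Sdim_x(\mathscr{X}) = 0$ is equivalent to $\Fib_x(\mathscr{X}) = \mathscr{X}$, i.e., $\dim(\Fib_x(\mathscr{X})) = \dim(\mathscr{X})$, the schematically trivial locus equals $\{x : \dim(\Fib_x(\mathscr{X})) \geq \dim(\mathscr{X})\}$, which is closed by \Cref{prop: stratification by schematic dimension}. There is no real obstacle here; the corollary is a clean packaging of the semi-continuity result specialized to the value $0$, and the only subtlety is to invoke the right characterization of schematic triviality in terms of the invariants just introduced.
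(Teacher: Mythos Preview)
Your proposal is correct and takes essentially the same approach as the paper: both deduce the corollary directly from the lower semi-continuity of $\Sdim_x(\mathscr{X})$ in \Cref{prop: stratification by schematic dimension}, using that schematically trivial means $\Sdim_x(\mathscr{X})=0$. One small caution about your alternative argument: since $\mathscr{X}$ is only assumed locally of finite type, $\dim(\mathscr{X})$ need not be finite, so phrasing the condition as $\dim(\Fib_x(\mathscr{X})) \geq \dim(\mathscr{X})$ requires a little care; the primary argument via $\Sdim_x$ avoids this entirely.
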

\begin{proof}
    This is an immediate consequence of 
    \Cref{prop: stratification by schematic dimension}, 
    as the locus of sche\-ma\-ti\-cally trivial points is the subset
    where the schematic dimension is minimal, i.e., $\Sdim_x(\mathscr{X})=0$.
\end{proof}

\begin{example}[Hironaka's example] \label{example: hironaka} Let us recall a
well-known example of Hironaka that yields a proper smooth three-dimensional
algebraic space which is not a scheme (see \cite[Sect. 2]{Hironaka-non-prjoective-example} and \cite[pg. 14]{knutson}). 
Let $P$ be a smooth
projective variety of dimension $3$ equipped with a free involution
$\sigma: P \to P$. Assume that there are two smooth irreducible curves $C, D
\subset P$ intersecting transversally at exactly two points $c,d \in
P(k)$. Furthermore, suppose that $\sigma(C) = D$.

One then defines two
varieties $V_1$ and $V_2$ as follows. The first variety $V_1$ is obtained by
first blowing up $P \setminus \{c\}$ along $C$ and then blowing up the strict
transform of $D$. On the other hand $V_2$ is defined by blowing up $P \setminus
\{d\}$ along $D$ and then blowing up the strict transform of $C$. Both varieties are
isomorphic over the common open preimage $U$ of $P \setminus \{c,d\}$; we
denote by $V$ the scheme obtained by gluing $V_1$ and $V_2$ along $U$. 
 It can be checked that $V$ is a smooth proper variety equipped with an involution such that the induced projection $V \to P$ is birational
and $\mathbb{Z}/2\mathbb{Z}$-equivariant. 

The quotient $V/(\mathbb{Z}/2\mathbb{Z})$ is a smooth proper three-dimensional non-schematic algebraic space
equipped with a birational morphism $V/(\mathbb{Z}/2\mathbb{Z}) \to
P/(\mathbb{Z}/2\mathbb{Z})$ to the smooth projective variety
$P/(\mathbb{Z}/2\mathbb{Z})$. This gives an example of a smooth proper algebraic
space $X = V/(\mathbb{Z}/2\mathbb{Z})$ such that for any point $x \in X(k)$ we have $\Sdim_x(X) = \dim(X) = 3$. 
If we choose the point $x$ to be non-schematic, then we have $\dim\Fib_x(X) \geq 1$ (see \Cref{prop: characterization of algebraic spaces}), and hence
this is an instance where $\Sdim_x(X) + \dim\Fib_x(X) \neq \dim(X)$.
\end{example}

\subsection{Computing schematic fibers}
In this subsection, we provide two ways aiding in the computation of schematic fibers:
the behavior of the schematic fiber under products and an alternative description of the schematic fiber as a base locus or excess intersection similar to the notion of a 
uniform base point.

We begin with the behavior under products.
If one only cares about the underlying reduced structure of the schematic fiber,
then it commutes with products. This is the case for all applications we have in mind
as already the closed points of the schematic fiber carry the information about
the schematicity of a moduli space, 
see \Cref{prop: schematic points for adequate moduli spaces}.

\begin{proposition}[Schematic fibers and products]
\label{prop: schematic fiber and products}
     Let $\mathscr{X}$ and $\mathscr{Y}$ be integral qua\-si-separated
     algebraic stacks locally of finite
     type over $k$. Let $x \in \mathscr{X}(k)$ and $y \in \mathscr{Y}(k)$.
     Then we have $\Fib_{(x,y)}(\mathscr{X}\times\mathscr{Y})_{\red}=
     \Fib_x(\mathscr{X})_{\red}\times \Fib_y(\mathscr{Y})_{\red}$.
\end{proposition}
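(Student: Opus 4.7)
My plan is to prove the two inclusions separately at the level of reduced closed substacks.

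For the inclusion $\subseteq$, I would choose finite type algebras $(\mathscr{U},A)$ at $x$ and $(\mathscr{V},B)$ at $y$ realizing $\Fib_x(\mathscr{X})=\overline{f_A^{-1}(f_A(x))}$ and $\Fib_y(\mathscr{Y})=\overline{f_B^{-1}(f_B(y))}$. Since $k$ is algebraically closed and $A,B$ are finitely generated integral $k$-algebras, the tensor product $A\otimes_k B$ is again an integral domain; by flat base change over the field $k$, the product morphism $f_A\times f_B:\mathscr{U}\times\mathscr{V}\to\Spec(A)\times\Spec(B)=\Spec(A\otimes_k B)$ is schematically dominant, so $(\mathscr{U}\times\mathscr{V},A\otimes_k B)$ is a finite type algebra at $(x,y)$. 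Its fiber over $(f_A(x),f_B(y))$ is the product $f_A^{-1}(f_A(x))\times f_B^{-1}(f_B(y))$, and since products of reduced finite type $k$-schemes are reduced over the algebraically closed field $k$, the reduced closure in $\mathscr{X}\times\mathscr{Y}$ equals $\Fib_x(\mathscr{X})_{\red}\times\Fib_y(\mathscr{Y})_{\red}$. The minimality defining $\Fib_{(x,y)}(\mathscr{X}\times\mathscr{Y})$ then yields the desired inclusion.

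For the inclusion $\supseteq$, I would fix an arbitrary finite type algebra $(\mathscr{W},C)$ at $(x,y)$ with associated morphism $f=f_C$ and show
\[
    \Fib_x(\mathscr{X})_{\red}\times\Fib_y(\mathscr{Y})_{\red}\;\subseteq\;\overline{f^{-1}(f(x,y))}_{\red};
\]
applied to the minimizing $C$ this gives the claim. After shrinking $\mathscr{W}$ I may assume $\mathscr{W}=\mathscr{U}'\times\mathscr{V}'$ is a product open (this only shrinks the fiber and hence its closure). For each closed $k$-point $y'\in\Fib_y(\mathscr{Y})(k)\cap\mathscr{V}'$, the slice $f_{y'}:\mathscr{U}'\cong\mathscr{U}'\times\{y'\}\hookrightarrow\mathscr{U}'\times\mathscr{V}'\xrightarrow{f}\Spec(C)$ corresponds to a finite type algebra at $x$, so by minimality of $\Fib_x(\mathscr{X})$ every closed $k$-point $x'\in\Fib_x(\mathscr{X})(k)\cap\mathscr{U}'$ lies in $f_{y'}^{-1}(f_{y'}(x))$; hence $f(x',y')=f(x,y')$. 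A symmetric slicing at $x$ gives $\Fib_y(\mathscr{Y})\subseteq\overline{f_x^{-1}(f_x(y))}^{\mathscr{Y}}$ where $f_x:\mathscr{V}'\to\Spec(C)$ is the slice at $x$; since $f_x^{-1}(f_x(y))$ is closed in $\mathscr{V}'$, this forces $f(x,y')=f(x,y)$ for every such $y'$. Combining yields $(x',y')\in f^{-1}(f(x,y))$ for all closed $k$-points $x'\in\Fib_x(\mathscr{X})(k)\cap\mathscr{U}'$ and $y'\in\Fib_y(\mathscr{Y})(k)\cap\mathscr{V}'$. By \Cref{remark: connectedness of schematic fiber} the intersections $\Fib_x(\mathscr{X})\cap\mathscr{U}'$ and $\Fib_y(\mathscr{Y})\cap\mathscr{V}'$ are schematically dense in the respective schematic fibers, so such closed $k$-points are dense in $\Fib_x(\mathscr{X})_{\red}\times\Fib_y(\mathscr{Y})_{\red}$, giving the desired inclusion.

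The hard part is the $\supseteq$ direction, since an arbitrary finite type algebra $C$ on $\mathscr{X}\times\mathscr{Y}$ need not respect the product structure. The key device is the slicing argument: reduce to a product open $\mathscr{U}'\times\mathscr{V}'$ and then apply the minimality of the one-variable schematic fiber separately to horizontal and vertical slices through the closed $k$-points of $\Fib_x$ and $\Fib_y$. A minor technical point appearing in the $\subseteq$ direction is the injectivity of $A\otimes_k B\hookrightarrow H^0(\mathscr{U}\times\mathscr{V},\cO)$, which follows from flat base change over the base field.
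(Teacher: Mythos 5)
Your overall strategy coincides with the paper's: the easy inclusion via the product algebra $(\mathscr{U}\times\mathscr{V},\,A\otimes_k B)$, and the hard inclusion by slicing an arbitrary finite type algebra at $(x,y)$ and invoking minimality of the one-variable schematic fibers. However, there is a genuine gap at the very first step of your $\supseteq$ argument: the claim that after shrinking you may assume $\mathscr{W}=\mathscr{U}'\times\mathscr{V}'$ is a product open. The Zariski topology on $\mathscr{X}\times\mathscr{Y}$ is strictly finer than the product topology, and product opens do \emph{not} form a neighborhood basis. Already for $\mathscr{X}=\mathscr{Y}=\mathbb{A}^1$ and $\mathscr{W}=\mathbb{A}^2\setminus \mathbb{V}(st-1)$ around the origin: any $U\times V$ with $U,V$ cofinite opens containing $0$ must meet the hyperbola, so no product open neighborhood of $(0,0)$ is contained in $\mathscr{W}$. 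Since the rest of your argument needs the fixed slices $\mathscr{U}'\times\{y'\}$ and $\{x\}\times\mathscr{V}'$ to lie inside the domain of $f_C$, this reduction cannot simply be asserted.

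The gap is repairable, and the repair is exactly what the paper does: keep $\mathscr{W}$ arbitrary and let the slice opens vary with the point, taking $\mathscr{W}_{y'}$ to be the preimage of $\mathscr{W}$ under $\mathscr{X}\times\{y'\}\to\mathscr{X}\times\mathscr{Y}$, which gives a finite type algebra $(\mathscr{W}_{y'},C_{y'})$ at $x$ whenever $(x,y')\in\mathscr{W}$; one then restricts attention to $y'$ in the open slice $\mathscr{W}_x\subseteq\mathscr{Y}$, which still meets $\Fib_y(\mathscr{Y})$ in a dense open by \Cref{remark: connectedness of schematic fiber}. With that modification your two-step slicing (first at $x$ in the $\mathscr{Y}$-direction to pin down the admissible $y'$, then in the $\mathscr{X}$-direction through each such $y'$) goes through and essentially recovers the paper's proof. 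The remaining ingredients you use, namely the integrality of $A\otimes_k B$ over the algebraically closed field $k$, the identification of the fiber of $f_A\times f_B$ with the product of fibers, and the density of closed $k$-points of the product of the restricted fibers, are all correct.
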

\begin{proof}
     Note that for any finite type algebra $(\mathscr{U}, A)$ at $x$ 
     and $(\mathscr{V}, B)$ at $y$, 
     we obtain a finite type algebra $(\mathscr{U} \times \mathscr{V}, A \otimes B)$. 
     This immediately shows the inclusion
     $\Fib_x(\mathscr{X})\times \Fib_y(\mathscr{Y})\supseteq
     \Fib_{(x,y)}(\mathscr{X}\times\mathscr{Y})$.

     For the converse, let $(\mathscr{W},C)$ be a finite type algebra
     at $(x,y)$ such that the fiber of the natural morphism
     $\mathscr{W}\to\Spec(C)$ at $(x,y)$ is $\mathscr{W}\cap \Fib_{(x,y)}(\mathscr{X}\times\mathscr{Y})$.
     Let $y'$ be a $k$-point of $Y$ such that $(x,y')$ lies in $\mathscr{W}$.
     Then we define a finite type algebra $(\mathscr{W}_{y'},C_{y'})$
     at $x$ via setting $\mathscr{W}_{y'}$ as the preimage of $\mathscr{W}$
     under $\mathscr{X}\times \{y'\}\to \mathscr{X}\times \mathscr{Y}$
     and $C_{y'}$ as the image of $C$ under the induced morphism
     $H^0(\mathscr{W},\mathcal{O}_{\mathscr{W}})\to 
     H^0(\mathscr{W}_{y'},\mathcal{O}_{\mathscr{W}_{y'}})$.
     
     Consider such a $k$-point $y'$ such that in addition 
     $\Fib_{(x,y)}(\mathscr{X}\times\mathscr{Y})_{y'}$ is non-empty.
     Then we claim
     $\Fib_{(x,y)}(\mathscr{X}\times\mathscr{Y})_{y'} = \Fib_{x}(\mathscr{X})$
     and an analogous claim in for fibers in the $\mathscr{X}$ direction.

     To see the claim consider the commutative diagram
     \[
     \begin{tikzcd}
        \Fib_{(x,y)}(\mathscr{X}\times \mathscr{Y})_{\mid \mathscr{W}} \ar[r] & \mathscr{W} \ar[r] & \mathrm{Spec}(C)\\
        \Fib_{(x,y)}(\mathscr{X}\times \mathscr{Y})_{\mid \mathscr{W}_{y'}} \ar[r] \ar[u]& \mathscr{W}_{y'}\ar[u] \ar[r] & \mathrm{Spec}(C_{y'})\ar[u,hook].
     \end{tikzcd}
     \]
     By the definition of schematic fiber, the composition of the top row factors through 
     the closed point $(x,y)$ of $\Spec(C)$. As $C\twoheadrightarrow C_{y'}$
     by definition of $C_{y'}$, we have a closed immersion on the level of schemes.
     Thus, the composition of the bottom row also factors via the closed point $(x,y)$
     as $\Fib_{(x,y)}(\mathscr{X}\times \mathscr{Y})_{y'}$ is non-empty
     by assumption. We obtain 
     \[
        (\Fib_{(x,y)}(\mathscr{X}\times\mathscr{Y})\cap \mathscr{W})_{y'} = 
        \mathscr{W}_{y'}\times_{\Spec(C)}(x,y) = \mathscr{W}_{y'}\times_{\Spec(C_{y'})}(x,y)
     \]
     which contains the schematic fiber 
     $\Fib_x(\mathscr{X})\cap\mathscr{W}_{y'}$
     by the definition of schematic fiber. We obtain the inclusion
     $\Fib_{(x,y)}(\mathscr{X}\times\mathscr{Y})_{y'}\supseteq \Fib_{x}(\mathscr{X})$
     by taking closures in $\mathscr{X}$.
     The other inclusion holds by the first part of the proof. 
     Thus, we have the equality 
     $\Fib_{(x,y)}(\mathscr{X}\times\mathscr{Y})_{y'} = \Fib_{x}(\mathscr{X})$.
     
     Note that for
     $x$ itself we have $y\in\mathscr{W}_{x}$ and that 
     $\Fib_{(x,y)}(\mathscr{X}\times\mathscr{Y})_x$ is non-empty as it 
     contains $y$. Applying the claim we obtain
     $\Fib_{(x,y)}(\mathscr{X}\times\mathscr{Y})_{x} = \Fib_{y}(\mathscr{Y})$.
     Thus, the assumptions of the claim also hold for $k$-points $y'$
     of $\mathscr{W}_x\cap \Fib_{y}(\mathscr{Y})$.
     By another application we obtain
     $\Fib_{(x,y)}(\mathscr{X}\times\mathscr{Y})_{y'} = \Fib_{x}(\mathscr{X})$
     for such $k$-points $y'$.
     
     We conclude that 
     $\Fib_x(\mathscr{X})\times  (\Fib_y(\mathscr{Y}) \cap \mathscr{W}_x)$ 
     and $\Fib_{(x,y)}(\mathscr{X}\times\mathscr{Y})\cap
     (\mathscr{X}\times\mathscr{W}_x)$ have the same $k$-points. Furthermore,
     both are dense in $\Fib_x(\mathscr{X})\times \Fib_y(\mathscr{Y})$ and
     $\Fib_{(x,y)}(\mathscr{X}\times\mathscr{Y})$ respectively. As the $k$-points
     of an algebraic stack locally of finite type lie dense, we conclude.
\end{proof}

Next, we provide a ``base locus'' description of the schematic fiber in terms of 
certain Weil divisors.
\begin{definition} \label{defn: local weil divisors}
    Let $x \in \mathscr{X}(k)$ be a $k$-point of an integral quasi-separated
    algebraic stack $\mathscr{X}$ locally of finite type over $k$. 
    We denote by $\mathrm{Weil}_x$ the set of codimension 1 closed substacks
    $\mathscr{Z} \subset \mathscr{X}$ that contain $x$, and become principal
    over an open substack $\mathscr{U} \subseteq \mathscr{X}$ containing $x$,
    i.e., over $\mathscr{U}$ they are Cartier and cut out by a section of the
    structure sheaf $\mathcal{O}_{\mathscr{U}}$.
\end{definition}

\begin{remark}
 We note that $\mathrm{Weil}_x$ can be empty; this happens exactly when the point $x$ is schematically trivial (see \Cref{prop:schematic fiber as base locus}), e.g., if $x$ is a uniform base point, see \Cref{lemma: ubp implies schematically trivial}.
\end{remark}

\begin{notation} 
    \label{notn: local components around a point}
    Given a quasi-separated algebraic
    stack $\mathscr{H}$ locally of finite type over $k$ and a $k$-point $x \in \mathscr{H}(k)$, consider the open neighborhood $\mathscr{V} \subseteq \mathscr{H}$ of $x$ obtained by removing all the irreducible components of $\mathscr{H}$ that don't contain $x$. We denote by $\mathrm{L}_x(\mathscr{H})$ the closure of $\mathscr{V}$ in $\mathscr{H}$. It is the union of the irreducible components of $\mathscr{H}$ containing $x$ with appropriate closed substack structure.
\end{notation}

\begin{definition}
    Let $x \in \mathscr{X}(k)$ be a $k$-point of an integral 
    quasi-separated algebraic
    stack $\mathscr{X}$ locally of finite type over $k$. Consider the closed substack $\bigcap_{\mathscr{Z} \in \mathrm{Weil}_x} \mathscr{Z}$ of $\mathscr{X}$. The \emph{local base locus}
    $\mathrm{Bs}_x(\mathscr{X})$ is the closed substack of $\mathscr{X}$ defined by $\mathrm{L}_x\left(\bigcap_{\mathscr{Z} \in \mathrm{Weil}_x} \mathscr{Z}\right)$,
    see \Cref{notn: local components around a point} and \Cref{defn: local weil divisors}. 
\end{definition}
Note that, for all sufficiently small open neighborhoods $\mathscr{U}$ of $x$, 
we have that $\mathrm{Bs}_x(\mathscr{X})$ is the closure of 
$\mathscr{U} \cap \left(\bigcap_{\mathscr{Z} \in \mathrm{Weil}_x} \mathscr{Z}\right)$.

\begin{proposition}[Schematic fiber as a local base locus] \label{prop:schematic fiber as base locus}
    Let $x \in \mathscr{X}(k)$ be a $k$-point of an integral 
    quasi-separated
    algebraic stack $\mathscr{X}$ locally of finite type over $k$. Then, we have
    \[\Fib_x(\mathscr{X}) = \mathrm{Bs}_x(\mathscr{X}).\]
\end{proposition}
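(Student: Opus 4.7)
The plan is to prove both inclusions separately. For $\Fib_x(\mathscr{X}) \subseteq \mathrm{Bs}_x(\mathscr{X})$, I will exhibit a single finite type algebra $(\mathscr{U}, A)$ at $x$ whose associated closed substack $\overline{f_A^{-1}(f_A(x))}$ is exactly $\mathrm{Bs}_x(\mathscr{X})$; the minimality in the definition of $\Fib_x$ then gives the inclusion. For the converse $\mathrm{Bs}_x(\mathscr{X}) \subseteq \Fib_x(\mathscr{X})$, I will show that for every finite type algebra $(\mathscr{U}, A)$ at $x$, the closed substack $\overline{f_A^{-1}(f_A(x))}$ contains $\mathrm{Bs}_x(\mathscr{X})$, by turning the coordinate functions of $A$ into elements of $\mathrm{Weil}_x$.

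For the first inclusion, fix a quasi-compact open neighborhood $\mathscr{V}$ of $x$. Since $\mathscr{V}$ is Noetherian, the intersection $\bigcap_{\mathscr{Z} \in \mathrm{Weil}_x} (\mathscr{Z} \cap \mathscr{V})$ stabilizes after finitely many $\mathscr{Z}_1, \dots, \mathscr{Z}_n \in \mathrm{Weil}_x$. Each $\mathscr{Z}_i$ is cut out on some open neighborhood $\mathscr{U}_i$ of $x$ by a section $s_i \in H^0(\mathscr{U}_i, \mathcal{O}_{\mathscr{U}_i})$ with $s_i(x) = 0$. Shrinking to an open $\mathscr{U} \subseteq \mathscr{V} \cap \bigcap_i \mathscr{U}_i$ that also avoids the finitely many irreducible components of $\bigcap_{i} \mathscr{Z}_i$ not containing $x$, the subalgebra $A := k[s_1|_{\mathscr{U}}, \ldots, s_n|_{\mathscr{U}}] \subseteq H^0(\mathscr{U}, \mathcal{O}_\mathscr{U})$ is a finite type algebra at $x$, and $f_A^{-1}(f_A(x)) = \bigcap_i \mathbb{V}(s_i|_{\mathscr{U}}) = \bigcap_{\mathscr{Z} \in \mathrm{Weil}_x} \mathscr{Z} \cap \mathscr{U}$. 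By the choice of $\mathscr{U}$, the scheme-theoretic closure of this locally closed substack in $\mathscr{X}$ is precisely $\mathrm{L}_x(\bigcap_{\mathscr{Z} \in \mathrm{Weil}_x} \mathscr{Z}) = \mathrm{Bs}_x(\mathscr{X})$.

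For the reverse inclusion, let $(\mathscr{U}, A)$ be any finite type algebra at $x$ and pick generators $a_1, \ldots, a_m$ of $A$. Replacing each $a_i$ by $a_i' := a_i - a_i(x)$, we may assume $a_i'(x) = 0$ and $f_A^{-1}(f_A(x)) = \bigcap_{i : a_i' \not\equiv 0} \mathbb{V}(a_i')$. For each such $i$, the closure in $\mathscr{X}$ of the Cartier divisor $\mathbb{V}(a_i') \subseteq \mathscr{U}$ is a codimension $1$ closed substack $\mathscr{Z}_i$ containing $x$ and principal on $\mathscr{U}$, so $\mathrm{L}_x(\mathscr{Z}_i) \in \mathrm{Weil}_x$. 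Therefore
\[
\mathrm{Bs}_x(\mathscr{X}) \cap \mathscr{U} \;\subseteq\; \bigcap_{\mathscr{Z} \in \mathrm{Weil}_x} \mathscr{Z} \cap \mathscr{U} \;\subseteq\; \bigcap_i \mathbb{V}(a_i') \;=\; f_A^{-1}(f_A(x)).
\]
Since every irreducible component of $\mathrm{Bs}_x(\mathscr{X})$ contains $x$ and hence meets $\mathscr{U}$, taking closures in $\mathscr{X}$ yields $\mathrm{Bs}_x(\mathscr{X}) = \overline{\mathrm{Bs}_x(\mathscr{X}) \cap \mathscr{U}} \subseteq \overline{f_A^{-1}(f_A(x))}$, and the infimum over $(\mathscr{U}, A)$ gives $\mathrm{Bs}_x(\mathscr{X}) \subseteq \Fib_x(\mathscr{X})$.

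The main point requiring care is the bookkeeping of irreducible components that do not pass through $x$: the closure in $\mathscr{X}$ of a principal Cartier divisor $\mathbb{V}(s)$ on a neighborhood $\mathscr{U}$ need not lie in $\mathrm{Weil}_x$ on the nose, but always admits its $\mathrm{L}_x$-part as an element of $\mathrm{Weil}_x$; symmetrically, in the first inclusion one must shrink the open neighborhood so that the Noetherianly-finite list of divisors cuts out exactly $\mathrm{Bs}_x(\mathscr{X})$ after taking closures. Precisely these two manoeuvres are what the definition of $\mathrm{Bs}_x$ via the operator $\mathrm{L}_x$ (\Cref{notn: local components around a point}) is designed to accommodate, which is why I expect no obstruction beyond careful execution.
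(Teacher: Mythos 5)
Your proof is correct and follows essentially the same route as the paper: both directions rest on the same dictionary between elements of $\mathrm{Weil}_x$ and functions vanishing at $x$ in finite type algebras, with the bookkeeping merely organized differently (the paper fixes one algebra realizing $\Fib_x(\mathscr{X})$ and proves the equality on a small open before taking closures, whereas you run the two inclusions separately through the minimality in the definition of $\Fib_x$, and in the first inclusion package the Noetherianly-finite list of divisors into a single algebra $k[s_1,\dots,s_n]$ rather than using one $k[f]$ per divisor). The closure-and-component manoeuvres you flag at the end are treated at the same level of detail in the paper's own proof.
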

\begin{proof}
    Let $\mathscr{U}$ be a small enough neighborhood of $x$ such that
    $\mathrm{Bs}_x(\mathscr{X})$ is the closure of 
    $\left(\bigcap_{\mathscr{Z} \in \mathrm{Weil}_x} \mathscr{Z}\right)\cap \mathscr{U}$.
    After replacing $\mathscr{U}$ by a possibly smaller open neighborhood of $x$, 
    we may assume that there is a
    finite type algebra $(\mathscr{U}, A)$ at $x$ inducing a schematically dominant morphism
    $f_A: \mathscr{U} \to \Spec(A)$ such that we have
    $f_A^{-1}(f_A(x)) = \Fib_x(\mathscr{X}) \cap \mathscr{U}$. 
    We show that $\Fib_x(\mathscr{X}) \cap \mathscr{U} = 
    \left(\bigcap_{\mathscr{Z} \in \mathrm{Weil}_x} \mathscr{Z}\right)\cap \mathscr{U}$. 
    This claim concludes the proof by taking closures of both sides.
    
    We start by proving the inclusion 
    $\Fib_x(\mathscr{X}) \cap \mathscr{U} \subseteq 
    \left(\bigcap_{\mathscr{Z} \in \mathrm{Weil}_x} \mathscr{Z}\right) \cap \mathscr{U}$. 
    It suffices to show that for all $\mathscr{Z} \in \mathrm{Weil}_x$, 
    we have $\Fib_x(\mathscr{X}) \subseteq \mathscr{Z}$. 
    For any $\mathscr{Z} \in \mathrm{Weil}_x$, 
    there is an open substack $\mathscr{V}$ containing $x$ 
    and a schematically dominant morphism $f: \mathscr{V} \to \mathbb{A}^1$
    such that $\mathscr{Z} \cap \mathscr{V} = f^{-1}(0)$. 
    Then by the definition of schematic fiber, 
    we have $\Fib_x(\mathscr{X}) \cap \mathscr{V} \subseteq f^{-1}(0)$. 
    Taking closures we get $\Fib_x(\mathscr{X}) \subseteq \overline{f^{-1}(0)}
    \subseteq \mathscr{Z}$.

    For the other inclusion, notice that the $k$-point $f_A(x)$ in the integral affine scheme $\Spec(A)$ is cut out by finitely many Cartier divisors $f_A(x) = \bigcap_{i \in I} Z_i$, where each $Z_i \subseteq \Spec(A)$ is the vanishing locus of a non-zero function $f_i \in A$. Taking the preimage, we see that 
\begin{equation} \label{equation: local equality fib with base locus}
    \Fib_x(\mathscr{X}) \cap \mathscr{U} = \bigcap_{i \in I} f_A^{-1}(Z_i).
\end{equation}
    By construction, we have $\overline{f_A^{-1}(Z_i)} \in \mathrm{Weil}_{x}$. 
    In particular, we have the inclusion
    $\bigcap_{i \in I} \overline{f_A^{-1}(Z_i)} \supset \bigcap_{ \mathscr{Z}
    \in \mathrm{Weil}_x} \mathscr{Z}$, and hence we may conclude from
    \Cref{equation: local equality fib with base locus}.
\end{proof}

\subsection{Nonschematic points of algebraic spaces}
The goal of this subsection is to provide some characterizations of non-schematic points 
of algebraic spaces in terms of some local geometric features.

\begin{proposition} \label{prop: characterization of algebraic spaces}
    Let $X$ be an integral separated algebraic space locally of finite type over $k$, and let $x \in X(k)$ be a $k$-point. Then, the following are equivalent:
    \begin{enumerate}[(i)]
        \item The point $x$ is a schematic point of $X$.
        \item For any positive dimensional closed subspace $Z \subseteq X$
        containing $x$, there is an open neighborhood $U \subseteq X$ of $x$ and
        a function $g \in H^0(U, \cO_U)$ such that $g|_Z$ is not constant.
        \item The schematic fiber $\Fib_x(X)$ has dimension $0$.
    \end{enumerate}
    If one of the equivalent conditions (i) -- (iii) holds, 
    then the schematic fiber $\Fib_x(X)$ is a reduced point.
\end{proposition}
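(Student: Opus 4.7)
The plan is to establish the cycle (i) $\Leftrightarrow$ (iii) $\Leftrightarrow$ (ii), and to derive the reduced-point claim as a byproduct of (i) $\Rightarrow$ (iii). The central ingredient is Zariski's Main Theorem for algebraic spaces, which upgrades a local quasi-finite morphism to a scheme into a scheme structure on an open neighborhood of $x$.

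\textbf{(i) $\Rightarrow$ (iii) and the reduced-point statement.} Suppose $x$ is schematic and pick an affine open neighborhood $\mathscr{U} = \Spec(B)$ of $x$ in $X$. Then $(\mathscr{U}, B)$ is a finite type algebra at $x$ in the sense of \Cref{defn: finite type algebra at x}, and the induced morphism $f_B \colon \mathscr{U} \to \Spec(B)$ is the identity. Hence $f_B^{-1}(f_B(x)) = \Spec(B/\mathfrak{m}_x) = \Spec(k)$ is the reduced point $\{x\}$, and by minimality of $\Fib_x(X)$ among such closures we conclude that $\Fib_x(X)$ coincides with this reduced point. This gives (iii) and simultaneously the reduced-point statement.

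\textbf{(iii) $\Rightarrow$ (i).} Pick a finite type algebra $(\mathscr{U}, A)$ at $x$ realizing the schematic fiber, so that $f_A^{-1}(f_A(x)) \cap \mathscr{U}$ is zero-dimensional; in particular, $f_A$ is quasi-finite at $x$. Using the openness of the quasi-finite locus, shrink $\mathscr{U}$ to a quasi-compact open neighborhood of $x$ on which $f_A$ is everywhere quasi-finite. Since $X$, and hence $\mathscr{U}$, is separated, the morphism $f_A \colon \mathscr{U} \to \Spec(A)$ is separated, quasi-finite, and of finite type. By Zariski's Main Theorem for algebraic spaces \cite[\href{https://stacks.math.columbia.edu/tag/082I}{Tag 082I}]{sp}, there is a factorization $\mathscr{U} \xrightarrow{j} Z \xrightarrow{\pi} \Spec(A)$ with $j$ an open immersion and $\pi$ finite. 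Since $\pi$ is finite over an affine scheme, $Z$ is itself affine, hence a scheme; thus $\mathscr{U}$ is an open subspace of an affine scheme, and in particular a scheme. This shows $x$ is schematic.

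\textbf{(ii) $\Leftrightarrow$ (iii).} Here one reinterprets (ii) via the base-locus description $\Fib_x(X) = \mathrm{Bs}_x(X)$ of \Cref{prop:schematic fiber as base locus}. For $g \in H^0(U, \cO_U)$ with $x \in U$, the condition ``$g|_{Z \cap U}$ is constant'' is equivalent to the containment $Z \cap U \subseteq V(g - g(x))$, and $V(g - g(x))$ is precisely a principal effective Cartier divisor on $U$ through $x$; conversely, every element of $\mathrm{Weil}_x$ is locally of this form after shrinking $U$. Consequently, the negation of (ii) amounts to the existence of a positive-dimensional closed subspace $Z \ni x$ which is contained, in a neighborhood of $x$, in every $\mathscr{Z} \in \mathrm{Weil}_x$, i.e., in $\mathrm{Bs}_x(X) = \Fib_x(X)$. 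Since every irreducible component of $\Fib_x(X)$ passes through $x$ by \Cref{remark: connectedness of schematic fiber}, this in turn is equivalent to $\dim \Fib_x(X) \geq 1$, i.e., to the failure of (iii).

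\textbf{Main obstacle.} The principal difficulty is the implication (iii) $\Rightarrow$ (i), where one must invoke Zariski's Main Theorem in the algebraic-space setting and verify that the hypotheses (quasi-finiteness, separatedness, finite type) are in place so that the factorization through a finite cover of $\Spec(A)$ can be exploited to transfer schematicness to $\mathscr{U}$. The equivalence (ii) $\Leftrightarrow$ (iii) is then a more bookkeeping matter, translating constancy of functions on $Z$ into containment in principal Cartier divisors through $x$ and applying the base-locus description of the schematic fiber.
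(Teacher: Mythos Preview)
Your proof is correct. The crucial step (iii) $\Rightarrow$ (i) via Zariski's Main Theorem is exactly what the paper does, and your (i) $\Rightarrow$ (iii) is the paper's (i) $\Rightarrow$ (ii) $\Rightarrow$ (iii) compressed into one step using the same affine-neighborhood idea.

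The only genuine difference is the handling of (ii) $\Leftrightarrow$ (iii). The paper argues directly: assuming (ii) and $\dim \Fib_x(X) \geq 1$, it applies (ii) to $Z = \Fib_x(X)$ itself, obtaining a function $g$ on some neighborhood $U$ that is non-constant on $\Fib_x(X)\cap U$; the subalgebra $k[g]\subseteq H^0(U,\cO_U)$ then gives a finite type algebra whose fiber over $x$ is strictly smaller, contradicting minimality. You instead route through the base-locus description of \Cref{prop:schematic fiber as base locus}, translating ``$g|_{Z\cap U}$ constant'' into ``$Z\cap U \subseteq V(g-g(x))$'' and identifying such vanishing loci with elements of $\mathrm{Weil}_x$. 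Both approaches are valid; the paper's is shorter and avoids invoking the extra proposition, while yours makes explicit the connection to the $\mathrm{Weil}_x$-picture that drives the later results such as \Cref{prop: reason why algebraic space at a point global}. One small point: in your argument you should note that $V(g-g(x))$ is an effective Cartier divisor only when $g$ is non-constant on $U$ (otherwise $g-g(x)=0$), but this is harmless since constant $g$ contribute nothing to (ii).
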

\begin{proof}
The implication (i) $\Rightarrow$ (ii) can be easily seen by taking an affine
neighborhood $U \subseteq X$ of $x$.

For the implication (ii) $\Rightarrow$ (iii), suppose that (ii) holds and that
$\Fib_x(X)$ has positive dimension. Then, there exists
an open neighborhood $U \subseteq X$ of $x$ and a function $g \in H^0(U, \cO_U)$
such that $g|_{\Fib_x(X) \cap U}$ is not constant.
Since $\Fib_x(X)\cap U$ is connected (\Cref{remark: connectedness of schematic fiber}) 
and locally of finite type over $k$, the induced morphism
$g|_{\Fib_x(X) \cap U}: \Fib_x(X) \cap U \to \mathbb{A}^1$ 
being non-constant means it is schematically dominant. 
In particular, the same holds for $g: U \to \mathbb{A}^1$.
Denote by $A \subseteq H^0(U, \cO_U)$ the ring generated by $g$. 
Then $A$ is isomorphic to the polynomial ring $k[t]$ and 
induces a schematically dominant morphism $f_A: U \to \mathbb{A}^1$ 
such that $\Fib_x(X) \cap U$ does not map to a $k$-point. 
This contradicts the definition of $\Fib_x(X)$.

The implication (iii) $\Rightarrow$ (i) follows by applying Zariski's main theorem \cite[\href{https://stacks.math.columbia.edu/tag/082I}{Tag 082I}]{sp} to an $x$-minimal morphism $f_A: U \to \Spec(A)$.

Finally, note that the reducedness of the schematic fiber is clear if (i) holds.
\end{proof}

\begin{example} \label{example: fulghesu example}
    In \cite[Ex. 2.3]{fulghesu-rational-curves}, there is an example of an
    integral smooth proper 
    algebraic space $X$ of dimension $3$ over $\mathbb{C}$ equipped with a flat morphism $\pi: X \to S$ to a projective surface $S$ such that the fibers are rational curves. The main interest here is that $X$ is not a scheme, but has a morphism $\pi: X \to S$ where all fibers are schemes. In this example we can see that any locally closed integral subspace $Y \hookrightarrow X$ of positive dimension has no schematically trivial points. Indeed, any such $Y$ satisfies one of the following:
    \begin{enumerate}[(1)]
        \item The composition $Y \to X \xrightarrow{\pi} S$ is non-trivial.
        \item $Y$ lies in a fiber of $\pi: X \to S$, and hence it is a scheme of dimension 1.
    \end{enumerate}
    Let $x \in X(k)$ be a point in the non-schematic locus of $X$. By
    \Cref{prop: characterization of algebraic spaces}, the dimension of the
    schematic fiber $\Fib_x(X)$ at $x$ is at least $1$, and it must be contained in a
    fiber of $\pi: X \to S$. In particular, $\Fib_x(X)$ is a scheme of dimension 1.
    \end{example}

\begin{theorem}[Global version] \label{prop: reason why algebraic space at a point global}
    Let $X$ be an integral separated algebraic space locally of finite type over $k$. Suppose that $x \in X(k)$ is a $k$-point of $X$ that is not schematic. Then, one of the following holds:
    \begin{enumerate}[(A)]
        \item There is an effective Cartier divisor $D \subset X$ such that the corresponding line bundle $\cO_X(D)$ is not Zariski-locally trivializable around $x$.
        \item There is a positive dimensional closed subspace $Z \subseteq X$
        containing $x$ such that every effective Cartier divisor $D$ containing
        $x$ also contains $Z$. 
    \end{enumerate}
\end{theorem}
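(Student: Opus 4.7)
The plan is to reduce the statement directly to the two previously established results about the schematic fiber: \Cref{prop: characterization of algebraic spaces}, which says that $x$ being non-schematic forces $\dim \Fib_x(X) \geq 1$, and \Cref{prop:schematic fiber as base locus}, which identifies $\Fib_x(X)$ with the local base locus $\mathrm{Bs}_x(X) = \mathrm{L}_x\bigl(\bigcap_{\mathscr{Z} \in \mathrm{Weil}_x} \mathscr{Z}\bigr)$. Given these two inputs, I would simply take $Z := \Fib_x(X)$ as the candidate for the positive dimensional closed subspace in case (B), and argue the dichotomy by assuming that (A) fails.

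First I would note that $Z := \Fib_x(X)$ is a positive dimensional closed subspace of $X$ containing $x$: positive dimensionality is exactly \Cref{prop: characterization of algebraic spaces} applied to the non-schematic point $x$, and the fact that $x \in Z$ is built into the definition of the schematic fiber (since any $\overline{f_A^{-1}(f_A(x))}$ contains $x$).

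Next I would assume that (A) fails and show (B) for this choice of $Z$. Let $D \subset X$ be any effective Cartier divisor containing $x$. The negation of (A) gives an open neighborhood $U \subseteq X$ of $x$ and an isomorphism $\cO_U(D|_U) \cong \cO_U$; under such a trivialization, the canonical section of $\cO_X(D)$ cutting out $D$ transports to a section $f \in H^0(U, \cO_U)$ whose vanishing locus is $D \cap U$. Thus $D$ is principal around $x$, which is exactly the condition defining membership in $\mathrm{Weil}_x$ (\Cref{defn: local weil divisors}); note that $D$ is automatically of pure codimension $1$ as an effective Cartier divisor on an integral space. Applying \Cref{prop:schematic fiber as base locus} we get
\[
Z \;=\; \Fib_x(X) \;=\; \mathrm{L}_x\!\Bigl(\bigcap_{\mathscr{Z} \in \mathrm{Weil}_x} \mathscr{Z}\Bigr) \;\subseteq\; \bigcap_{\mathscr{Z} \in \mathrm{Weil}_x}\!\! \mathscr{Z} \;\subseteq\; D,
\]
which is precisely condition (B). This completes the reduction.

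There is essentially no hard step once the schematic fiber machinery is in place; the only thing to verify carefully is the elementary equivalence between ``$\cO_X(D)$ is Zariski-locally trivializable around $x$'' and ``$D$ is cut out by a section of the structure sheaf on an open neighborhood of $x$'', i.e., that $D \in \mathrm{Weil}_x$. This is a routine translation between Cartier divisors and local trivializations of their associated line bundles on an integral separated algebraic space, and presents no real obstacle.
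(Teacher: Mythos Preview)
Your proof is correct and follows essentially the same approach as the paper: both take $Z = \Fib_x(X)$ and show that if (A) fails then every effective Cartier divisor through $x$ lies in $\mathrm{Weil}_x$ and hence contains $\Fib_x(X)$. The only cosmetic difference is that you invoke \Cref{prop:schematic fiber as base locus} for the inclusion $\Fib_x(X) \subseteq D$, whereas the paper re-derives this directly from the definition of the schematic fiber via the morphism $U \to \mathbb{A}^1$ given by a local equation of $D$.
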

\begin{proof}
    Suppose that neither (A) nor (B) are satisfied, we shall derive a
    contradiction. Since $x$ is not a schematic point, the schematic fiber
    $\Fib_x(X) \subseteq X$ has positive dimension (\Cref{prop: characterization
    of algebraic spaces}). Since (B) does not hold, there exists an effective
    Cartier divisor $D\subset X$ that contains $x$ but it does not contain
    $\Fib_x(X)$. This Cartier divisor is cut out by a section $s: \cO_X \to
    \cO_X(D)$. Since (A) does not hold, there exists an open
    neighborhood $U \subseteq X$ of $x$ such that $\cO_X(D)$ restricted to $U$ 
    is the trivial line bundle.
    Then $s|_{U}$ is a function in $H^0(U, \cO_U)$ that restricts to a
    non-constant function on $\Fib_x(X)$. The induced schematically 
    dominant morphism $f: U \to \mathbb{A}^1$ does not map $\Fib_x(X) \cap U$ 
    to a $k$-point of $\mathbb{A}^1$, thus contradicting the definition of $\Fib_x(X)$.
\end{proof}

\begin{example}
    Let $X$ be an integral separated algebraic space with a uniform
    base point $x \in X(k)$. Then we have $\Fib_x(X) = X$ (\Cref{thm:main}), and
    therefore (B) in \Cref{prop: reason why algebraic space at a point global}
    cannot hold for any effective Cartier divisor (recall that all effective
    Cartier divisors contain $x$ by the definition of a uniform base point).
    This actually implies that every effective Cartier divisor is not Zariski
    locally trivializable around $x$, so a stronger form of (A) holds. Note that
    this agrees with what we saw in \Cref{corollary: ubp implies line bundles
    are not zariski locally trivializable}.
\end{example}

\begin{remark} \label{remark: schemes do not necessarily satisfy property (B)}
    We note that there are proper schemes over $k$ such that every point satisfies condition (B) in \Cref{prop: reason why algebraic space at a point global}. Indeed, any proper integral scheme with trivial Picard group (such as the one provided by $X_{\Sigma_2}$ in \cite[Ex. 3.5]{eikelberg-picard-grups-toric}) would give such an example.
\end{remark}

    In the case when the algebraic space $X$ is locally factorial, then 
    \Cref{prop: reason why algebraic space at a point global} actually characterizes non-schematic points. 
    \begin{corollary}
         Let $X$ be a separated irreducible locally factorial algebraic space over $k$, and let $x \in X(k)$ be a $k$-point. Then, $x$ is not schematic if and only if at least one of the following holds:
    \begin{enumerate}[(A)]
        \item There is an effective Cartier divisor $D \subset X$ such that the corresponding line bundle $\cO_X(D)$ is not Zariski-locally trivializable around $x$.
        \item There is a positive dimensional closed subspace $Z \subseteq X$ containing $x$ such that every effective Cartier divisor $D$ containing $x$ also contains $Z$.
    \end{enumerate}
    \end{corollary}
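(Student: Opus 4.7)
The forward implication, that a non-schematic $x$ must satisfy (A) or (B), is exactly \Cref{prop: reason why algebraic space at a point global} and does not require local factoriality. The plan is to prove the converse by contraposition: assuming $x$ is a schematic point with affine open neighborhood $U \subseteq X$, I would show that neither (A) nor (B) can hold. The local factoriality hypothesis enters only in the treatment of (B).

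The failure of (A) is immediate. Any effective Cartier divisor $D \subset X$ gives a line bundle $\cO_X(D)$; its restriction to the affine scheme $U$ is a line bundle on an affine scheme, which is Zariski-locally trivial. After shrinking $U$ we find a Zariski open neighborhood of $x$ on which $\cO_X(D)$ is trivial, so (A) fails.

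For the failure of (B), fix a positive dimensional closed subspace $Z \subseteq X$ containing $x$; replacing $Z$ by an irreducible component through $x$, I may assume $Z$ is irreducible. Then $Z \cap U$ is a positive dimensional irreducible closed subscheme of the affine scheme $U$, so there exists $f \in H^0(U, \cO_U)$ with $f(x) = 0$ but $f|_{Z \cap U} \not\equiv 0$. The vanishing locus $V(f) \subset U$ is an effective Cartier divisor on $U$. Not every irreducible component of $V(f)$ containing $x$ can contain $Z \cap U$, since otherwise $f$ would vanish identically on the irreducible $Z \cap U$; hence we obtain a prime divisor $D_0 \subset U$ through $x$ that does not contain $Z \cap U$. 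Taking the closure $\overline{D_0} \subseteq X$ with its reduced subspace structure produces a prime Weil divisor through $x$, and by the locally factorial hypothesis (\Cref{defn: local factoriality}) this is an effective Cartier divisor on $X$. Because $Z \cap U$ is dense in the irreducible $Z$ and $\overline{D_0} \cap U = D_0$ does not contain $Z \cap U$, we conclude that $\overline{D_0}$ does not contain $Z$. This exhibits an effective Cartier divisor through $x$ not containing $Z$, contradicting (B).

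The only subtle point is ensuring that the component $D_0$ we extract yields a genuine prime Weil divisor of $X$ after closure, so that local factoriality upgrades it to Cartier; this is automatic since a single irreducible component of $V(f)$ with its reduced structure is prime in $U$, and its closure in $X$ remains irreducible and reduced of codimension one.
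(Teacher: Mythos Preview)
Your proof is correct and follows essentially the same approach as the paper's: both directions are argued the same way, with local factoriality used to upgrade the closure of a vanishing locus on an affine neighborhood to a global Cartier divisor witnessing the failure of (B). The only cosmetic difference is that you extract a single irreducible component $D_0$ of $V(f)$ before taking closure, whereas the paper simply takes the closure of all of $V(f)$; both work equally well.
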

    \begin{proof}
        We have seen that a non-schematic point $x$ would satisfy at least one of
        (A) or (B) in \Cref{prop: reason why algebraic space at a point global}.
        For the converse, let $x \in X(k)$ be a schematic point. This means that
        there exists an open neighborhood $U \subseteq X$ of $x$ such that $U$ is
        an affine scheme. Then (A) does not hold, as any Cartier divisor on $U$
        is Zariski locally trivializable. On the other hand, for any closed subspace $Z \subseteq X$ of
        positive dimension containing $x$, we use the affineness of $U$ to find
        a function $f \in H^0(U, \cO_U)$ such that $f$ vanishes at $x$ but does
        not vanish at a point of $Z \cap U$. By the local factoriality of $X$,
        the closure $\overline{\mathbb{V}(f)} \subset X$ of the vanishing locus of $f$ is
        a Cartier divisor, which exhibits the failure of (B).
    \end{proof}

We also formulate a local version of \Cref{prop: reason why algebraic space at a
point global}, which characterizes nonschematic points 
also
in the case when $X$ is not locally factorial and not separated.
\begin{theorem}[Local version]\label{thm: reason why algebraic space at a point local}
    Let $X$ be an integral quasi-separated algebraic space locally of finite
    type over $k$ and let $x \in X(k)$ be a $k$-point. Then, $x$ is not
    schematic if and only if at least one of the following holds:
    \begin{enumerate}[(A)]
        \item There is an open neighborhood $U \subseteq X$ of $x$ and an effective Cartier divisor $D \subset U$ such that the corresponding line bundle $\cO_U(D)$ is not Zariski-locally trivializable around $x$.
        \item There is a positive dimensional closed subspace $Z \subseteq X$ containing $x$ such that for every open neighborhood $U \subseteq X$ of $x$ and every effective Cartier divisor $D \subset U$ containing $x$, we have $Z \cap U \subseteq D$. 
        \item $X$ is not Zariski-locally separated at $x$, i.e., there is no Zariski open neighborhood of $x$ that is separated.
    \end{enumerate}
\end{theorem}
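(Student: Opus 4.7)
The plan is to handle the two directions separately, reducing the local statement to the global version (Theorem \ref{prop: reason why algebraic space at a point global}) and the schematic fiber machinery.

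For the easy direction, if $x$ is schematic I would take an affine open neighborhood $U_0 \subseteq X$ of $x$: this immediately kills (C), and for (A) any line bundle on $U_0$ is trivializable on a principal open around $x$. For (B), given any positive dimensional closed $Z \subseteq X$ containing $x$, I would pick $f \in H^0(U_0, \cO_{U_0})$ with $f(x) = 0$ but not identically vanishing on $Z \cap U_0$, which exists because $U_0$ is affine and $Z \cap U_0$ contains some point distinct from $x$; then $V(f) \subset U_0$ is a Cartier divisor witnessing the failure of (B) at the neighborhood $U_0$.

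For the sufficiency direction I would argue contrapositively: assume $x$ is not schematic and that (C) fails, and show (A) or (B) must hold. Failure of (C) produces a separated open $U \ni x$, and since schematicness is Zariski local, $x$ is not schematic in $U$; by \Cref{prop: characterization of algebraic spaces} the schematic fiber $\Fib_x(U)$ has positive dimension. Let $Z$ be the closure of $\Fib_x(U)$ in $X$, a positive dimensional closed subspace through $x$. If (B) holds with this $Z$ we are done, so suppose not: there exist an open $V \subseteq X$ containing $x$ and an effective Cartier divisor $D \subset V$ through $x$ with $Z \cap V \not\subseteq D$. Using \Cref{remark: connectedness of schematic fiber}, every irreducible component of $\Fib_x(U)$ passes through $x$, and I can use this plus the irreducibility of such a component to replace $V$ by $V \cap U$ while keeping $\Fib_x(U) \cap V \not\subseteq D$.

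Now suppose (A) also fails. Then $\cO_V(D)$ is Zariski-locally trivializable on some open $W \subseteq V$ containing $x$, and the defining section of $D$ transports to a function $f \in H^0(W, \cO_W)$ with $V(f) = D \cap W$ and $f(x) = 0$. The crucial point is that $f$ is non-constant on $\Fib_x(U) \cap W$: an irreducible component of $\Fib_x(U)$ not contained in the closure of $D$ meets $W$ (it contains $x$) and, being irreducible, also meets the open complement of $D$ in $W$. Then $A := k[f] \subseteq H^0(W, \cO_W)$ is a finite type algebra at $x$ inducing a morphism $f_A : W \to \mathbb{A}^1$ with $f_A(x) = 0$, forcing $\Fib_x(U) \cap W \subseteq V(f)$ by the definition of the schematic fiber, which contradicts the non-vanishing of $f$ on $\Fib_x(U) \cap W$. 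The main obstacle is precisely the shrinking step: converting a failure-of-(B) datum on some $V \subseteq X$ into one on $V \cap U \subseteq U$ requires \emph{both} that every irreducible component of $\Fib_x(U)$ passes through $x$ and that such a component is irreducible enough to meet the intersection of two open subsets; without this structural input provided by \Cref{remark: connectedness of schematic fiber}, the local-to-global bridge would collapse.
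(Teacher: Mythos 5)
Your proof follows the same route as the paper's: the paper's own proof of this theorem is a two-line deferral to the argument for \Cref{prop: reason why algebraic space at a point global}, and your hard direction is exactly that argument correctly adapted to the local setting --- you use the failure of (C) to pass to a separated open $U \ni x$ so that \Cref{prop: characterization of algebraic spaces} applies, and you use \Cref{remark: connectedness of schematic fiber} to transport the failure-of-(B) datum from $V$ to $V \cap U$ without losing the non-containment. That localization step (two nonempty opens of an irreducible component of $\Fib_x(U)$ through $x$ must meet) is the only genuinely new content beyond the global proof, and you handle it correctly; the final contradiction via the finite type algebra $k[f]$ and the minimality defining $\Fib_x(U)$ is also right.

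One caveat, in the easy direction: your assertion that $Z \cap U_0$ contains a point distinct from $x$ does not follow from ``$Z$ is positive dimensional and contains $x$.'' If $Z$ is the disjoint union of the reduced point $x$ and a positive-dimensional closed subspace missing $U_0$, then $Z \cap U_0 = \{x\}$ and your witness $(U_0, V(f))$ does not defeat (B); one would have to enlarge $U$, and it is not clear this always succeeds on a general integral algebraic space. The assertion does hold whenever $Z$ has positive dimension \emph{at} $x$ (equivalently, some positive-dimensional irreducible component of $Z$ passes through $x$), which is the reading under which the theorem is actually used: the $Z$ produced in your forward direction is $\overline{\Fib_x(U)}$, all of whose components contain $x$ by \Cref{remark: connectedness of schematic fiber}. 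The paper's one-sentence treatment of the easy direction (and its proof of the locally factorial corollary) makes the same implicit assumption, so this is a shared imprecision in how (B) is quantified rather than a defect specific to your argument, but it is worth flagging in your write-up.
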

\begin{proof}
    All conditions are local around the point. If $x$ is a schematic point of
    $X$, then it can easily be seen that neither (A), (B), nor (C) could
    possibly hold. On the other hand, if $x$ is not a schematic point of $X$,
    then an argument by contradiction similar to the proof of \Cref{prop: reason
    why algebraic space at a point global} shows that at least one of (A), (B),
    or (C) must be satisfied.
\end{proof}

\begin{remark}
    We note that the conditions (A) and (B) 
    of \Cref{prop: reason why algebraic space at a point global}
    can happen simultaneously.
    Let $X$ be a smooth separated integral
    algebraic space with a positive dimensional
    uniform base locus 
    $|X|_{bs}$. Note that this can not happen in dimension $2$, as the schematic locus is big
    and the uniform base locus lies in the complement.
    For higher dimension such spaces exist, see \Cref{example-kollar}.
    
    Then both (A) and (B) exhibit the unschematicness of closed points in $|X|_{bs}$.
    First note that taking the closure of a reduced effective Cartier divisors 
    defined on the schematic locus gives us an effective Cartier divisor on $X$.
    For (A) recall that only the structure sheaf is Zariski-locally trivializable around $x$,
    see \Cref{corollary: ubp implies line bundles are not zariski locally trivializable},
    so any effective Cartier divisor on $X$ exhibits the unschematicness of $x$ via (A).
    For (B) the uniform base locus is positive dimensional by assumption and
    is contained in every effective Cartier divisor by definition, exhibiting 
    the unschematicness of $x$ via (B).
\end{remark}

\subsection{Schematic points of weak topological moduli spaces}
Due to the mostly
topological nature of our arguments, we are able to obtain results in 
the
following context.
\begin{definition}[Weak topological moduli space] \label{remark: more general contexts}
    Let $\mathscr{X}$ be an algebraic stack and let $X$ be an algebraic space, both of which are locally of finite type over $k$. A morphism $\pi: \mathscr{X} \to X$ is called a \emph{weak topological moduli space} if the following are satisfied:
    \begin{enumerate}[(i)]
        \item $\pi$ is a uniform categorical space (i.e., for all flat morphisms $Y \to X$ from an algebraic space $Y$, the base-change $\pi_Y: \mathscr{X}\times_X Y \to Y$ is a categorical space).
        \item $\pi$ is universally closed, quasi-compact, and  quasi-separated.
        \item For any $k$-point $y \in X(k)$, the stack $\pi^{-1}(y)$ has a unique closed point.
    \end{enumerate}
\end{definition}

The following proposition follows immediately from the definition (cf. \cite[Prop. 5.4.1]{ams}).
\begin{proposition} \label{prop: properties of wtms}
    Let $\pi: \mathscr{X} \to X$ be a weak topological moduli space. Then, the following hold:
    \begin{enumerate}[(a)]
    \item Let $f: Y \to X$ be a flat morphism, where $Y$ is an algebraic space locally of finite type over $k$. Then, the base-change $\pi_Y: \mathscr{X}\times_X Y \to Y$ is a weak topological moduli space.
    \item $\pi$ is surjective.
    \item If $\mathscr{X}$ is irreducible, then $X$ is irreducible.
        \item If $\mathscr{X}$ is reduced, then $X$ is reduced.
        \item If $\mathscr{X}$ is normal, then $X$ is normal.
        \item If $\mathscr{X}$ is quasi-separated, then $X$ is quasi-separated.\qed
    \end{enumerate}
\end{proposition}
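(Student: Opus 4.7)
The proposition collects six assertions; my plan is to verify each one using only the three defining conditions of a weak topological moduli space, roughly following the pattern of \cite[Prop. 5.4.1]{ams}. For part (a), I would check the three conditions of \Cref{remark: more general contexts} one at a time: uniformity of the categorical space property gives (i) for the base change along a flat morphism, while universal closedness, quasi-compactness, and quasi-separatedness in (ii) are stable under any base change. For (iii), if $y' \in Y(k)$ lies over $y \in X(k)$, then $\pi_Y^{-1}(y') \simeq \pi^{-1}(y)$ as $k$-stacks since $y', y$ are both copies of $\Spec(k)$, so the unique closed point transfers. Part (b) combines (ii) and (iii): condition (iii) supplies a nonempty fiber over every $k$-point, so $\pi(|\mathscr{X}|)$ contains all of $X(k)$, which is dense in $|X|$ because $X$ is locally of finite type over the algebraically closed field $k$; on the other hand universal closedness of $\pi$ shows $\pi(|\mathscr{X}|)$ is closed, hence equal to $|X|$. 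Part (c) is then immediate: a continuous surjection sends an irreducible space to an irreducible space.

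The heart of the argument is parts (d) and (e), both of which exploit the universal property of the uniform categorical space $\pi$. For (d), the hypothesis that $\mathscr{X}$ is reduced means the morphism $\mathscr{X} \to X$ factors uniquely through the closed immersion $X_{\red} \hookrightarrow X$; by the categorical space property applied to the algebraic space $X_{\red}$ this factorization descends to a morphism $\rho \colon X \to X_{\red}$, and uniqueness in the universal property (using the identity $X \to X$) forces $X_{\red} \hookrightarrow X \xrightarrow{\rho} X_{\red}$ to be a retraction of a closed immersion, which implies $X_{\red} \to X$ is an isomorphism. For (e), I would first reduce to the case where $X$ is irreducible by restricting to connected components (using surjectivity and the observation that normal stacks have disjoint irreducible components). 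Since $X$ is locally of finite type over $k$ and hence Nagata, the normalization $\nu \colon X^\nu \to X$ is finite birational; normality of $\mathscr{X}$ and surjectivity of $\pi$ give a factorization $\mathscr{X} \to X^\nu$ through $\nu$, and the categorical-space universal property produces a section $X \to X^\nu$ of $\nu$. A finite birational morphism of reduced algebraic spaces admitting a section is an isomorphism, so $X = X^\nu$ is normal.

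Part (f) uses the characterization that $X$ is quasi-separated if and only if the intersection of any two quasi-compact open subspaces $U_1, U_2 \subseteq X$ is quasi-compact. Since $\pi$ is quasi-compact, the preimages $\pi^{-1}(U_i)$ are quasi-compact open substacks of $\mathscr{X}$, and quasi-separatedness of $\mathscr{X}$ forces $\pi^{-1}(U_1 \cap U_2) = \pi^{-1}(U_1) \cap \pi^{-1}(U_2)$ to be quasi-compact; applying the surjective continuous map $\pi$ yields $U_1 \cap U_2 = \pi(\pi^{-1}(U_1 \cap U_2))$, which is quasi-compact as claimed. The main obstacle I expect is part (e): one must be a little careful that the factorization through the normalization really exists at the level of algebraic stacks (rather than just on a smooth atlas), and that the reduction from the general case to the irreducible case works without incident; both should be routine under the standing assumption that everything is quasi-separated and locally of finite type over $k$, but they are the only non-formal ingredients in the proposition.
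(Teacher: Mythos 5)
The paper itself offers no argument for this proposition (it is stated with a \qed and a pointer to \cite[Prop. 5.4.1]{ams}), so the only question is whether your argument is sound. Parts (a)--(d) are correct: the uniqueness-forced retraction $X \xrightarrow{\rho} X_{\red} \hookrightarrow X = \id_X$ in (d) is the standard move. In (e) the reduction to the irreducible case is not as automatic as your parenthetical suggests: surjectivity plus disjointness of the irreducible components of $\mathscr{X}$ do not by themselves prevent two irreducible components of $X$ from meeting. You need axiom (iii): if $Z_1 \neq Z_2$ were components of $X$ meeting at a closed point $x$, then $\pi^{-1}(x)$ is quasi-compact and quasi-separated with a unique closed point, so every point of $\pi^{-1}(x)$ specializes to that closed point and hence $\pi^{-1}(x)$ is contained in a single irreducible component $\mathscr{X}_i$ of the normal stack $\mathscr{X}$; on the other hand each $Z_a$ is the image of a component of $\mathscr{X}$ meeting $\pi^{-1}(x)$, forcing $Z_1 = \pi(\mathscr{X}_i) = Z_2$. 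With that supplied, the rest of (e) goes through.

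The genuine gap is in (f). For algebraic spaces, quasi-separatedness is \emph{not} equivalent to ``intersections of quasi-compact open subspaces are quasi-compact'': that is a purely Zariski-topological condition on $|X|$, while quasi-compactness of the diagonal sees the \'etale topology. The standard example $X = \mathbb{A}^1_k/\mathbb{Z}$ (characteristic $0$, $\mathbb{Z}$ acting by translation) has $|X|$ carrying the trivial topology, since the only $\mathbb{Z}$-invariant closed subsets of $\mathbb{A}^1$ are $\emptyset$ and $\mathbb{A}^1$; thus every intersection of quasi-compact opens is quasi-compact, yet $X$ is not quasi-separated. So your argument for (f) proves a strictly weaker statement and does not yield the claim. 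The repair is to run the same computation on \'etale charts rather than Zariski opens: for affine \'etale $U, V \to X$ one has $\mathscr{X} \times_X (U \times_X V) \cong (\mathscr{X}\times_X U) \times_{\mathscr{X}} (\mathscr{X}\times_X V)$, which is quasi-compact because $\mathscr{X}\times_X U$ and $\mathscr{X}\times_X V$ are quasi-compact ($\pi$ is quasi-compact over the affines $U$, $V$) and $\Delta_{\mathscr{X}}$ is quasi-compact ($\mathscr{X}$ is quasi-separated); since the base change of $\pi$ is surjective, $|U\times_X V|$ is the continuous image of a quasi-compact space, hence $U \times_X V$ is quasi-compact and $\Delta_X$ is quasi-compact.
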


\begin{example}
    \quad
    \begin{enumerate}[(1)]
        \item An adequate moduli space (in the sense of \cite[Defn. 5.1.1]{ams}) is a weak topological moduli space.
        \item A topological moduli space (in the sense of Rydh's upcoming work \cite{rydh_tms}) is a weak topological moduli space.
        \item If $\pi: \mathscr{X} \to X$ is a gerbe \cite[\href{https://stacks.math.columbia.edu/tag/06QB}{Tag 06QB}]{sp}, then it is a weak topological moduli space.
    \end{enumerate}
\end{example}

\begin{lemma}[Schematic fiber under moduli space morphisms]\label{lemma: schematic fiber for stack with adequate moduli space}
    Let $\mathscr{X}$ be an integral quasi-separated
    algebraic stack locally of finite type over
    $k$. Suppose that $\mathscr{X}$ admits a weak topological moduli space $\pi:
    \mathscr{X} \to X$. Let $x \in \mathscr{X}(k)$ be
    a closed point. Then we have $\Fib_x(\mathscr{X}) =
    \pi^{-1}(\Fib_{\pi(x)}(X))$ as closed substacks of $\mathscr{X}$.
\end{lemma}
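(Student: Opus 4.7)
The plan is to prove the two inclusions separately. The key ingredient in both directions is that for any open $V \subseteq X$, the restriction $\pi|_{\pi^{-1}(V)}\colon \pi^{-1}(V) \to V$ is again a weak topological moduli space by flat base change (property (a) of \Cref{prop: properties of wtms}), and in particular a categorical space, so that the pullback map $H^0(V, \cO_V) \to H^0(\pi^{-1}(V), \cO_{\pi^{-1}(V)})$ is an isomorphism. This lets us transport finite type algebras between $x \in \mathscr{X}$ and $\pi(x) \in X$.

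For the containment $\Fib_x(\mathscr{X}) \subseteq \pi^{-1}(\Fib_{\pi(x)}(X))$, I would fix a finite type algebra $(V, B)$ at $\pi(x)$ realizing $\Fib_{\pi(x)}(X) = \overline{f_B^{-1}(f_B(\pi(x)))}$. Pulling back via $\pi$ produces a finite type algebra $(\pi^{-1}(V), B)$ at $x$, whose induced morphism $f_B \circ \pi\colon \pi^{-1}(V) \to \Spec(B)$ has fiber $\pi^{-1}(f_B^{-1}(f_B(\pi(x))))$ over $f_B(\pi(x))$. Taking closures in $\mathscr{X}$ and invoking the minimality in the definition of $\Fib_x(\mathscr{X})$ gives the inclusion.

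For the reverse containment $\pi^{-1}(\Fib_{\pi(x)}(X)) \subseteq \Fib_x(\mathscr{X})$, I would start with a finite type algebra $(\mathscr{U}, A)$ at $x$ realizing $\Fib_x(\mathscr{X})$ and first reduce to the case where $\mathscr{U}$ is $\pi$-saturated. Setting $W := X \setminus \pi(\mathscr{X}\setminus \mathscr{U})$ (open by universal closedness of $\pi$), I claim $\pi(x) \in W$: if some $t \in \mathscr{X}\setminus \mathscr{U}$ had $\pi(t) = \pi(x)$, then $t$ would lie in the fiber $\pi^{-1}(\pi(x))$, whose unique closed point (by condition (iii) in \Cref{remark: more general contexts}) is $x$, forcing $x \in \overline{\{t\}} \subseteq \mathscr{X}\setminus \mathscr{U}$, a contradiction. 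By minimality of $\Fib_x(\mathscr{X})$ one can replace $\mathscr{U}$ by $\pi^{-1}(W)$. The categorical property then factors $f_A$ uniquely as $\tilde{f}_A \circ \pi$ for some $\tilde{f}_A\colon W \to \Spec(A)$, and $(W, A)$ becomes a finite type algebra at $\pi(x)$. Applying the definition of $\Fib_{\pi(x)}(X)$ yields $\Fib_{\pi(x)}(X) \cap W \subseteq \tilde{f}_A^{-1}(\tilde{f}_A(\pi(x)))$, whose $\pi$-pullback lies in $f_A^{-1}(f_A(x)) \subseteq \Fib_x(\mathscr{X})$.

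The main obstacle is the final step: upgrading this open-part inclusion $\pi^{-1}(\Fib_{\pi(x)}(X) \cap W) \subseteq \Fib_x(\mathscr{X})$ to $\pi^{-1}(\Fib_{\pi(x)}(X)) \subseteq \Fib_x(\mathscr{X})$ via a closure argument. Concretely, $\Fib_{\pi(x)}(X) \cap W$ is schematically dense in $\Fib_{\pi(x)}(X)$ by \Cref{remark: connectedness of schematic fiber}; combined with the surjectivity of $\pi$ and the connectedness of fibers forced by the unique-closed-point condition together with universal closedness, I expect this to imply that $\pi^{-1}(\Fib_{\pi(x)}(X) \cap W)$ is dense in $\pi^{-1}(\Fib_{\pi(x)}(X))$, so that taking closures yields the inclusion. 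The delicate point is ruling out irreducible components of $\pi^{-1}(\Fib_{\pi(x)}(X))$ that are contracted into $\Fib_{\pi(x)}(X) \setminus W$, and matching the scheme-theoretic closure with the pullback $\pi^{-1}(\Fib_{\pi(x)}(X))$; this is where the full package of weak-topological-moduli-space hypotheses must be invoked.
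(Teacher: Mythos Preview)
Your approach coincides with the paper's. Both directions are argued the same way: the easy inclusion $\Fib_x(\mathscr{X}) \subseteq \pi^{-1}(\Fib_{\pi(x)}(X))$ by pulling back a finite type algebra realizing $\Fib_{\pi(x)}(X)$, and the hard inclusion by first saturating an arbitrary open $\mathscr{U}\ni x$ to $\pi^{-1}(W)$ with $W = X\setminus\pi(\mathscr{X}\setminus\mathscr{U})$, and then factoring $f_A$ through $\pi$ via the categorical-space property. Your verification that $\pi(x)\in W$ (equivalently $\pi^{-1}(\pi(x))\subseteq\mathscr{U}$) using that $x$ is the unique closed point of the fiber is exactly the paper's argument.

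The ``main obstacle'' you isolate---upgrading the inclusion on the saturated open $\pi^{-1}(W)$ to the inclusion of closed substacks on all of $\mathscr{X}$---is not spelled out in the paper either. The paper asserts ``it suffices to show'' the factorization and then establishes only that; it does not explain why the factorization forces $\pi^{-1}(\Fib_{\pi(x)}(X))\subseteq\overline{f_A^{-1}(f_A(x))}$ globally rather than just over $\pi^{-1}(W)$. So your proposal is at the same level of completeness as the paper's own proof, and you have in fact been more scrupulous in flagging where the delicate point lies.
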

\begin{proof}
The inclusion $\Fib_x(\mathscr{X}) \subseteq \pi^{-1}(\Fib_{\pi(x)}(X))$ is immediate from the definition
of schematic fiber. For the converse, choose a finite type algebra
$(\mathscr{U}, A)$ at $x$. 
Consider the corresponding schematically dominant morphism $f_A: \mathscr{U} \to \Spec(A)$. 
It suffices to show that, after perhaps shrinking the open neighborhood $\mathscr{U}$, 
there is an open subspace $U \subseteq X$ containing $\pi(\mathscr{U})$ and together with a
factorization $f_A: \mathscr{U} \xrightarrow{\pi} U \to \Spec(A)$.

Note that $\mathscr{U}$ contains the fiber $\pi^{-1}(\pi(x))$. 
Indeed, since $\pi^{-1}(\pi(x))$ is quasi-compact and quasi-separated, 
every point in $|\pi^{-1}(\pi(x))|$ specializes to a closed point,
see \cite[\href{https://stacks.math.columbia.edu/tag/0DQN}{Tag 0DQN}]{sp}. 
Since $x$ is the unique point closed point of $\pi^{-1}(\pi(x))$, 
it follows that every point in $|\pi^{-1}(\pi(x))|$ is a generalization of $x$. 
We conclude that $\mathscr{U}$ contains $\pi^{-1}(\pi(x))$, 
because it is closed under generalization and contains $x$.

Consider the closed complement $\mathscr{Z} := \mathscr{X} \setminus \mathscr{U}$ 
(with its reduced substack structure). 
The image $\pi(\mathscr{Z})$ is closed
and does not contain $\pi(x)$. 
Let $U:= X \setminus \pi(\mathscr{Z})$ denote the open complement containing $\pi(x)$. 
Then $\pi^{-1}(U)\subseteq \mathscr{U}$ is an open substack, which has weak topological moduli space $\pi^{-1}(U) \to U$. 
By the universal property of weak topological moduli spaces, 
we obtain the desired factorization $f_A: \pi^{-1}(U) \xrightarrow{\pi} U \to \Spec(A)$.
\end{proof}

\begin{theorem} \label{prop: schematic points for adequate moduli spaces}
    Let $\mathscr{X}$ be an integral 
    quasi-separated
    algebraic stack locally of finite type over $k$. 
    Let $x \in \mathscr{X}(k)$ be a closed point. 
    Suppose that $\mathscr{X}$ admits a separated weak topological moduli space 
    $\pi: \mathscr{X} \to X$. 
    Then the following are equivalent:
\begin{enumerate}[(i)]
        \item The schematic fiber $\Fib_x(\mathscr{X})$ has $x$ as its unique closed point.
        \item We have $\Fib_x(\mathscr{X}) = \pi^{-1}(\pi(x))$.
        \item The image $\pi(x)$ is a schematic point of $X$.
        \item For every open substack $\mathscr{U} \subseteq \mathscr{X}$ containing $x$ that admits a separated weak topological moduli space $\eta: \mathscr{U} \to U$, the image $\eta(x)$ is a schematic point of $U$.
    \end{enumerate}
\end{theorem}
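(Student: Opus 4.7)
The plan is to leverage two inputs from earlier in this section: the identity $\Fib_x(\mathscr{X})=\pi^{-1}(\Fib_{\pi(x)}(X))$ from the preceding lemma on schematic fibers under moduli space morphisms, and the characterization that a $k$-point of an integral separated algebraic space locally of finite type over $k$ is schematic if and only if its schematic fiber is $0$-dimensional, in which case it equals the point itself as a reduced substack. The hypotheses together with the properties of weak topological moduli spaces guarantee that $X$ and every such moduli space $U$ are integral separated algebraic spaces locally of finite type over $k$, so the characterization applies.

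For the chain (i) $\Leftrightarrow$ (ii) $\Leftrightarrow$ (iii), the implication (ii) $\Rightarrow$ (i) is immediate from the defining property that each $\pi^{-1}(y)$ has a unique closed point. For (i) $\Rightarrow$ (iii), substituting the identity shows that every $k$-point of $\Fib_{\pi(x)}(X)$ lifts to a closed point of $\Fib_x(\mathscr{X})=\pi^{-1}(\Fib_{\pi(x)}(X))$, namely the unique closed point of its $\pi$-fiber. Hence (i) forces $\pi(x)$ to be the unique $k$-point of $\Fib_{\pi(x)}(X)$, which by local finite type over the algebraically closed field $k$ means $\dim \Fib_{\pi(x)}(X)=0$, and the characterization yields (iii). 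Conversely, (iii) implies $\Fib_{\pi(x)}(X)=\{\pi(x)\}$ as a reduced point, which, combined with the lemma, gives (ii).

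The implication (iv) $\Rightarrow$ (iii) is trivial by taking $\mathscr{U}=\mathscr{X}$, so the main remaining task is (iii) $\Rightarrow$ (iv). As in the proof of the preceding lemma, $\pi^{-1}(\pi(x))\subseteq \mathscr{U}$ by specialization: every point of this quasi-compact quasi-separated fiber specializes to its unique closed point $x$, so it lies in every open neighborhood of $x$. Combining this with (ii) for $\mathscr{X}$ and the locality of the schematic fiber (applied to restrictions of finite type algebras) yields $\Fib_x(\mathscr{U})=\pi^{-1}(\pi(x))$. The universal property of $\eta$ provides a factorization $\pi|_{\mathscr{U}}=h\circ \eta$ for some $h:U\to X$, and the preceding lemma applied to $\eta$ together with the surjectivity of $\eta$ gives $\Fib_{\eta(x)}(U)=h^{-1}(\pi(x))$. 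The main obstacle is then to show this fiber is $0$-dimensional: every $k$-point of $h^{-1}(\pi(x))$ corresponds to the unique closed point of its $\eta$-fiber in $\mathscr{U}$, which lies in $\pi^{-1}(\pi(x))$; but any point of $\pi^{-1}(\pi(x))$ closed in $\mathscr{U}$ must equal $x$ (otherwise its specialization closure in $\mathscr{X}$, which contains $x\in\mathscr{U}$, would give a nontrivial specialization inside $\mathscr{U}$). Hence $h^{-1}(\pi(x))$ has $\eta(x)$ as its unique $k$-point, so has dimension $0$ by local finite type over $k$, and the characterization concludes that $\eta(x)$ is schematic in $U$.
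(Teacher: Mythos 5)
Your proposal is correct and uses the same two key inputs as the paper's proof (the lemma $\Fib_x(\mathscr{X})=\pi^{-1}(\Fib_{\pi(x)}(X))$ for weak topological moduli spaces and the characterization of schematic points of separated algebraic spaces via zero-dimensionality of the schematic fiber), with only cosmetic differences: you chain (ii)$\Rightarrow$(i)$\Rightarrow$(iii)$\Rightarrow$(ii) instead of proving the two equivalences separately, and for (iii)$\Rightarrow$(iv) you push down to $U$ and count $k$-points of $\Fib_{\eta(x)}(U)$ rather than verifying (ii) for $\eta$ directly via $\eta^{-1}(\eta(x))=\pi^{-1}(\pi(x))$. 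Both routes rest on the same specialization argument inside the fiber $\pi^{-1}(\pi(x))$, so this is essentially the paper's proof.
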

\begin{proof}
    The equivalence (i) $\Leftrightarrow$ (ii) follows from the fact that every fiber of $\pi$ over a $k$-point $y \in X(k)$ contains a unique closed point of $\mathscr{X}$. 
    
    The equivalence (ii) $\Leftrightarrow$ (iii) can be seen as follows. 
    By \Cref{lemma: schematic fiber for stack with adequate moduli space}, we have $\Fib_x(\mathscr{X}) = \pi^{-1}(\pi(x))$ if and only if 
    $\Fib_{\pi(x)}(X) = \pi(x)$. By \Cref{prop: characterization of algebraic spaces}, we have $\Fib_{\pi(x)}(X) = \pi(x)$ if and only if $\pi(x)$ is a schematic point of $X$.
    
    The implication (iv) $\Rightarrow$ (iii) is immediate. For the converse, we equivalently show that (ii) holds for any such open substack $\mathscr{U}$ by noting that $\eta^{-1}(\eta(x)) = \pi^{-1}(\pi(x))$ from the proof of \Cref{lemma: schematic fiber for stack with adequate moduli space} and $\Fib_x(\mathscr{U}) = \Fib_x(\mathscr{X}) \cap \mathscr{U}$ from the definition of schematic fiber. (One may also proceed directly by noting that an argument as in the proof of \Cref{lemma: schematic fiber for stack with adequate moduli space} shows that the image $\pi(x)$ has a common open neighborhood $U \supset V \subseteq X$).
\end{proof}

 Using the terminology of \Cref{defn: local weil divisors}, 
 we may express the existence of sche\-ma\-tic points
 from \Cref{prop: schematic points for adequate moduli spaces} in terms of separating closed points with codimension 1 substacks.
\begin{proposition}
\label{prop: weil-separated and schematic}
    Let $x \in \mathscr{X}(k)$ be a closed point of an integral 
   quasi-separated stack $\mathscr{X}$ locally of finite type over $k$ that admits a separated weak topological moduli space. Suppose that for any other closed point $y \in \mathscr{X}(k)$, there exists a substack $\mathscr{Z} \in \mathrm{Weil}_x$ (see \Cref{defn: local weil divisors}) such that $y \notin \mathscr{Z}$. Then, for every open substack $\mathscr{U} \subseteq \mathscr{X}$ containing $x$ that admits a separated adequate moduli space $\pi: \mathscr{U} \to U$, the image $\pi(x)$ is a schematic point of $U$.
\end{proposition}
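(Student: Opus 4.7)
The plan is to reduce the statement to \Cref{prop: schematic points for adequate moduli spaces} via the ``base locus'' description of the schematic fiber supplied by \Cref{prop:schematic fiber as base locus}. Since any separated adequate moduli space is in particular a separated weak topological moduli space, it is enough to verify condition (i) of that criterion for $\mathscr{X}$, namely that $\Fib_x(\mathscr{X})$ has $x$ as its unique closed point; condition (iv) then immediately yields the desired conclusion for every open substack $\mathscr{U}\subseteq \mathscr{X}$ containing $x$ admitting a separated adequate moduli space.

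To verify condition (i), I invoke \Cref{prop:schematic fiber as base locus} to rewrite
\[
\Fib_x(\mathscr{X}) \;=\; \mathrm{Bs}_x(\mathscr{X}) \;=\; \mathrm{L}_x\!\Bigl(\bigcap_{\mathscr{Z}\in \mathrm{Weil}_x}\mathscr{Z}\Bigr),
\]
which sits as a closed substack of $\bigcap_{\mathscr{Z}\in \mathrm{Weil}_x}\mathscr{Z}$. By the definition of $\mathrm{Weil}_x$, every such $\mathscr{Z}$ contains $x$, so $x$ itself is a closed point of $\Fib_x(\mathscr{X})$. Conversely, any closed point of $\Fib_x(\mathscr{X})$ is a closed point of the ambient stack $\mathscr{X}$; since $\mathscr{X}$ is locally of finite type over the algebraically closed field $k$, such a closed point is automatically a $k$-point. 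If this closed $k$-point $y$ were distinct from $x$, the standing hypothesis would furnish some $\mathscr{Z}\in \mathrm{Weil}_x$ with $y\notin \mathscr{Z}$, contradicting the containment $\Fib_x(\mathscr{X}) \subseteq \mathscr{Z}$. Hence $x$ is the unique closed point of $\Fib_x(\mathscr{X})$, and (i) is verified.

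The genuine geometric content has already been carried out earlier: the identification $\Fib_x(\mathscr{X}) = \mathrm{Bs}_x(\mathscr{X})$ in \Cref{prop:schematic fiber as base locus} and the equivalence $(\text{i})\Leftrightarrow(\text{iv})$ in \Cref{prop: schematic points for adequate moduli spaces}. Given these, the present proposition is a direct translation of the hypothesis about separating divisors into the unique-closed-point condition required by the moduli-space criterion, and I do not expect any serious obstacle beyond correctly matching the two formulations.
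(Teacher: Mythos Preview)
Your proof is correct and follows exactly the approach the paper takes: the paper's own proof is the single sentence ``This is an immediate consequence of \Cref{prop: schematic points for adequate moduli spaces} and \Cref{prop:schematic fiber as base locus},'' and you have simply spelled out the details of how those two results combine.
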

\begin{proof}
    This is an immediate consequence of \Cref{prop: schematic points for adequate moduli spaces} and \Cref{prop:schematic fiber as base locus}.
\end{proof}

\section{Applications to moduli of bundles} \label{section: applications to the moduli of bundles}

In this section, we apply the theory developed to the stacks of
$G$-bundles on a smooth projective curve for a connected reductive group $G$.
We show that the semistable loci of $G$-bundles are the maximal open loci admitting
schematic weak topological moduli spaces
via schematic fibers. This generalizes and provides an alternative argument for the analogous
theorem for schematic adequate moduli spaces in \cite[Thm. A]{weissmann-zhang}. 

We also study open substacks $\mathscr{U}$ inside the semistable locus where we show that $\mathscr{U}$ admits a separated schematic weak topological moduli space if and only if the induced morphism to the moduli space of semistable $G$-bundles is an open immersion, \Cref{thm: open substacks of semistable G-bundles}.

Throughout this section, let $C$ be a smooth projective connected curve over an algebraically closed field $k$ and let $G$ be a connected reductive group over $k$. We denote by $\Bun_G$ the stack of principal $G$-bundles on $C$.

\subsection{Moduli of principal bundles on a curve} \label{subsection: moduli principal bundles}

The stack of principal $G$-bundles on $C$
is smooth, see \cite[Prop. 4.1]{hoffmann-moduli-stacks},
and its connected components correspond to elements in $\pi_1(G)$, 
see \cite[Thm. 5.8]{hoffmann-moduli-stacks}.
Principal $G$-bundles in the component $\mathscr{B}un_G^d$
corresponding to $d\in \pi_1(G)$
are said to have \emph{degree} $d$.

\subsubsection{Stability}\label{subsubsection: stability}
We briefly recall the notion of \emph{stability}.
A principal $G$-bundle $\mathcal{P}$ on $C$ is called \emph{semistable} (resp., \emph{stable})
if for all reductions $\mathcal{P}'$ of $\mathcal{P}$ to a parabolic subgroup $H\subseteq G$
and all dominant characters $\chi:H\to \gm$ the associated line bundle $\chi_{\ast}\mathcal{P}'$
has degree $\leq 0$ (resp., $<0$). If $G=\gl_r$, then this notion of semistability coincides
with the notion of slope-semistability.
Semistability is an open property in families. We denote by
$\mathscr{B}un_G^{d,(s)s}\subseteq \Bun_G^d$ the open substack
of (semi)stable principle $G$-bundles of degree $d$.

The stack $\mathscr{B}un_G^{d,ss}$ admits an adequate
moduli space $\mathscr{B}un_G^{d,ss} \to M^{d,ss}_G$. There exists a (possibly empty) open subspace $M_G^{d,s} \subseteq M^{d,ss}_G$ whose preimage is the stable locus $\Bun_{G}^{d,s} \subseteq \mathscr{B}un_G^{d,ss}$. The restriction $\Bun_{G}^{d,s} \to M_G^{d,s}$ is a homeomorphism and all stabilizers of $\Bun_{G}^{d,s}$ have dimension $\dim(Z_G)$, where $Z_G \subseteq G$ is the center. The moduli space $M^{d,ss}_G$ is a normal projective
variety (the construction as a quasi-projective normal variety is done in 
\cite[Cor. 5.5.3]{glss.large}, and projectivity follows from \cite{heinloth-semistable-reduction} and \cite{heinloth-addendum}).
If the genus $g_C$ of $C$ is at least $2$, 
then $M^{d,ss}_G$ has dimension $\dim(G)(g_C-1) + \dim(Z_G)$. 
Indeed,  
by a standard deformation
theory argument using Riemann-Roch, the stack $\Bun_{G}^{d,ss}$ has dimension equal to
$\dim(G)(g_C-1)$, the stable locus $\Bun_{G}^{d,s}$ is non-empty 
(\cite[Prop. 3.25]{holla2002parabolicreductionsprincipalbundles}), 
and we conclude
$\dim(M^{d,s}_G)=\dim(\Bun_{G}^{d,s})+\dim(Z_G)$.

\subsubsection{Determinant of cohomology} \label{subsubsection: determinant of cohomology}
There is a natural
line bundle $\mathcal{L}_{\det}$ on $\mathscr{B}un_G$, given by the \emph{determinant of cohomology}
(see e.g., \cite[1.F.a]{hmheinloth}).
Via the adjoint representation $\mathrm{Ad}: G \to \gl(\mathrm{Lie}(G))$, we
associate to any principal $G$-bundle $\mathcal{P}$ (not necessarily on $C$) its \emph{adjoint bundle} $\mathrm{Ad}_*\mathcal{P}:=\mathcal{P} \times^G \mathrm{Lie}(G)$. Using the universal principal bundle $\mathcal{P}_{univ}$ 
on $C \times \Bun_G$ we 
define
$ \mathcal{L}_{\det} := \det (\mathbb{R}\pr_{2,\ast}
   \mathrm{Ad}_{\ast}\mathcal{P}_{univ})^{-1}$ on $\mathscr{B}un_G$,
where $\pr_2:C\times \Bun_G \to \Bun_G$ denotes the projection. For any principal $G$-bundle $\mathcal{P}$ on $C$ we have $\mathcal{L}_{\det,\mathcal{P}}=
\det H^*(C,\mathrm{Ad}_{\ast}\mathcal{P})^{-1}$. If $G=\gl_n$, then $\mathcal{L}_{\det,\mathcal{E}}=
\det H^*(C,\mathcal{E}nd(\mathcal{E}))^{-1}$
for any vector bundle $\mathcal{E}$ on $C$.

\subsubsection{$\Theta$-semistability}
Associated to any line bundle $\mathcal{L}$ on an algebraic stack $\mathscr{X}$, 
there are several notions of semistable loci of $\mathcal{L}$. 
We briefly recall \emph{$\Theta$-semistability} defined in \cite{hmheinloth} and \cite{hl-theta}.
Let $\mathbb{K}/k$ be an algebraically closed field. A line bundle on the
quotient stack
$\Theta_{\mathbb{K}}:=[\mathbb{A}^1_{\mathbb{K}}/\mathbb{G}_{m,\mathbb{K}}]$
corresponds to a line bundle on $\mathbb{A}^1_{\mathbb{K}}$, which is
necessarily trivial, together with a $\mathbb{G}_{m,\mathbb{K}}$-action, which
is determined by the weight of $\mathbb{G}_{m,\mathbb{K}}$-action on the
(1-dimensional) fiber at $0 \in \mathbb{A}^1_{\mathbb{K}}$. Such an action is
given by a character of $\mathbb{G}_{m,\mathbb{K}}$, so we have an isomorphism
$\wt: \pic(\Theta_{\mathbb{K}}) \xrightarrow{\sim} \mathbb{Z}$.

A geometric point $x\in\mathscr{X}(\mathbb{K})$
is \emph{$\Theta$-semistable} with respect to $\mathcal{L}$ if for
all morphisms $f: \Theta_{\mathbb{K}} \to \mathscr{X}$
such that $f(1)\cong x$ and $f(0)\ncong x$, we have
$\wt(f^{\ast}\mathcal{L})\leq 0$.
We denote the locus of $\Theta$-semistable points of $\mathscr{X}$ by
$\mathscr{X}^{\Theta\text{-}ss}_{\mathcal{L}}$.

In the case of $G$-bundles we have 
$(\Bun^d_{G})^{\Theta\text{-}ss}_{\mathcal{L}_{\det}}=\Bun^{d,ss}_G$,
see \cite[Cor. 1.16]{hmheinloth}.

\subsubsection{Twisted principal bundles}
Let $1\to G \to \hat{G} \xrightarrow{\dt} \mathbb{G}_m\to 1$
be a short exact sequence of connected reductive groups, 
where $G$ is almost simple and simply connected. 
The induced morphism $\dt_{\ast}: \mathscr{B}un_{\hat G} \to \Pic$ 
is smooth by \cite[Cor. 4.2]{hoffmann-moduli-stacks}, 
and we call the image of a $\hat G$-bundle $\hat{\mathcal{P}}$ under $\dt_{\ast}$ the \emph{determinant} of $\hat{\mathcal{P}}$.

For any $d\in\pi_1(\hat G)$, the image $\dt_{\ast}(\mathscr{B}un_{\hat G}^d)$ of the connected component $\mathscr{B}un_{\hat G}^d$ is contained in the connected component of $\Pic$ corresponding to the image of $d\in\pi_1(\hat G)$ under the induced map $\pi_1(\hat G)\to\pi_1(\gm)$. By abuse of notion we still denote the image by $d\in\pi_1(\mathbb{G}_m)$, so $\dt_{\ast}:\mathscr{B}un^{d}_{\hat G}\to\Pic^d$. Restricting to the semistable locus we obtain a morphism of varieties
$\dt: M^{d,ss}_{\hat G}\to \pic^d$ by the universal property of adequate moduli
spaces, see \cite[Thm. 3.12]{Alper2019}.

Let $L$ be a line bundle on $C$ of degree $d$.
There are three canonical constructions we can carry out to fix the determinant:
Consider the fiber product squares
\[
\begin{tikzcd}
  \Bun^{ss}_{\hat G, \cong L}\ar[r] \ar[d] & M^{ss}_{\hat G,L} \ar[d]\ar[r] & \Spec(k) \ar[d,"L",hook]& \ \\
  \Bun^{d,ss}_{\hat G}\ar[r,"\text{ams}"'] & M^{d,ss}_{\hat G} \ar[r,"\dt"'] & \mathrm{Pic}^d
\end{tikzcd}
\]
where the arrow labeled ``ams'' is the adequate moduli space, and
 \begin{equation} \label{diagram: big twisted bundle diagram}
\begin{tikzcd}
    \Bun^{ss}_{\hat G,=L} \ar[r] \ar[d] & \mathscr{B}un_{\hat G, = L}\ar[d]
    \ar[r] & \Spec(k)\ar[d,"\mathbb{G}_m\text{-torsor}"] & \\
    \Bun^{ss}_{\hat G,\cong L} \ar[r] \ar[d] & \mathscr{B}un_{\hat G, \cong L}\ar[d]
    \ar[r] & \rB\gm \ar[r]\ar[d] & \Spec(k)\ar[d,"L",hook] & \\
    \Bun^{d,ss}_{\hat G}\ar[r,hook,"\circ" description] & \Bun_{\hat G}^d \ar[r,"\dt_*"'] & \Pic^d \ar[r,"\mathbb{G}_m\text{-gerbe}"'] &
    \mathrm{Pic}^d.
\end{tikzcd}
\end{equation}
Then we have natural morphisms 
\[
\pi_{=L}: \Bun^{ss}_{\hat G, = L} \xrightarrow{\mathbb{G}_m\text{-torsor}} \Bun^{ss}_{\hat G,\cong L} \xrightarrow{\pi_{\cong L}} M^{ss}_{\hat G,L}.
\]
The difference between $\Bun_{\hat G,\cong L}$ and $\Bun_{\hat G, = L}$
is a subtle one: the former one classifies principal
bundles with determinant being isomorphic to $L$, whereas the latter one has the isomorphism as part of datum.
This difference is invisible on the underlying topological spaces.

We recall for later use that both algebraic stacks $\Bun_{\hat G,\cong L}$ and $\Bun_{\hat G, = L}$ are smooth
and connected. Smoothness follows
from the smoothness of the morphism 
$\dt_{\ast}: \Bun_{\hat G}\to \Pic$,
whereas the connectedness can for example be seen
via global sections $H^0(\Bun_{\hat G,=L},\mathcal{O}_{\Bun_{\hat G,=L}})=k$,
see \cite[(5) just before Prop. 4.2.3]{biswas-hoffmann}.

The following lemma is a generalization of \cite[Prop. 2.1]{hoffmann-pic}, where the proof uses GIT-techniques, whereas here
we provide a proof using the language of adequate moduli spaces.

\begin{lemma}
    \label{lemma: moduli fixing det agree}
    Both $\pi_{=L}$ and $\pi_{\cong L}$ are adequate moduli spaces.
\end{lemma}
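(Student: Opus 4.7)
The plan is to deduce the adequate moduli space property for both $\pi_{\cong L}$ and $\pi_{=L}$ from the known adequate moduli space $\pi \colon \Bun_{\hat G}^{d,ss} \to M_{\hat G}^{d,ss}$ by exhibiting them as slices of this morphism, using actions of the Jacobian $\pic^0$ and of the central $\gm \subseteq \hat G$. This avoids GIT, as is the stated goal.

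For $\pi_{\cong L}$, the key input is the tensor product action of $\pic^0$ on $\Bun_{\hat G}$ induced by the central $\gm \subseteq \hat G$. This action descends to an action on $M_{\hat G}^{d,ss}$ and is compatible with the determinant morphisms $\dt_\ast$ and $\dt$, covering the translation action of $\pic^0$ on $\pic^d$. Since $\pic^d$ is a $\pic^0$-torsor (and $\pic^0$ is smooth), this forces $\dt$ and $\dt_\ast$ to be étale-locally trivial fiber bundles over $\pic^d$ with fibers $M_{\hat G, L}^{ss}$ and $\Bun_{\hat G, \cong L}^{ss}$. Concretely, I can choose an étale morphism $U \to \pic^d$ admitting a $k$-point $u \in U(k)$ over $L$ and isomorphisms
\[
\Bun_{\hat G}^{d,ss} \times_{\pic^d} U \;\cong\; \Bun_{\hat G, \cong L}^{ss} \times U, \qquad M_{\hat G}^{d,ss} \times_{\pic^d} U \;\cong\; M_{\hat G, L}^{ss} \times U,
\]
under which the adequate moduli space morphism base-changes to $\pi_{\cong L} \times \mathrm{id}_U$. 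Since adequate moduli spaces are stable under flat base change \cite[Prop. 5.2.9]{ams}, the right-hand morphism is an adequate moduli space, and the defining properties of an adequate moduli space restrict well along the fiber over $u \in U(k)$ of a \emph{product} (even though $\{u\}\hookrightarrow U$ is not flat), yielding that $\pi_{\cong L}$ is an adequate moduli space.

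For $\pi_{=L}$, I will use the $\gm$-torsor $p \colon \Bun_{\hat G,=L}^{ss} \to \Bun_{\hat G,\cong L}^{ss}$ appearing in diagram \eqref{diagram: big twisted bundle diagram}. This torsor corresponds to a line bundle $\cL'$ on $\Bun_{\hat G,\cong L}^{ss}$ on which the central $\gm$-inertia acts with non-zero weight (determined by the surjection $\hat G \xrightarrow{\dt} \gm$). The pushforward splits as $p_\ast \cO_{\Bun_{\hat G,=L}^{ss}} = \bigoplus_{n \in \mathbb{Z}} \cL'^{\otimes n}$, and applying $\pi_{\cong L,\ast}$ annihilates the summands with $n \neq 0$ because the adequate moduli space morphism extracts invariants under the gerby central $\gm$-stabilizer. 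Hence the structure sheaf map $\cO_{M_{\hat G, L}^{ss}} \to \pi_{=L, \ast}\cO_{\Bun_{\hat G,=L}^{ss}}$ is an isomorphism. Combined with the fact that $p$ is affine and universally closed, and that $\pi_{\cong L}$ is adequately affine, this will show that $\pi_{=L} = \pi_{\cong L} \circ p$ also satisfies all the axioms of an adequate moduli space.

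The main obstacle is the descent step at the end of the argument for $\pi_{\cong L}$: passing from the adequate moduli space property of $\pi_{\cong L} \times \mathrm{id}_U$ to that of $\pi_{\cong L}$ itself. Because the specialization $\{u\}\hookrightarrow U$ is not flat, one cannot simply invoke flat base change; instead one has to verify each of the three defining properties (universally closed, adequately affine, and isomorphism on structure sheaves) directly on the product $\mathscr{X} \times U \to X \times U$ and observe that they restrict compatibly along $\{u\}$, exploiting the product structure rather than the general base-change formalism.
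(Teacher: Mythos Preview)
Your strategy for $\pi_{\cong L}$ is essentially the paper's: both trivialize $\Bun_{\hat G}^{d,ss}\to \pic^d$ after an \'etale base change using the $\pic^0$-action coming from the central $\gm\cong Z^0\subseteq\hat G$, then descend. The paper works with the $Z^0$-rigidification and the explicit cover $\pic^0\to\pic^d$, $\cM\mapsto L\otimes\cM^{\otimes N}$, rather than an abstract \'etale neighborhood $U$. For $\pi_{=L}$ your route genuinely differs: the paper passes to the relative $\Spec$ of $\pi_{=L,*}\cO$ and then invokes Zariski's main theorem to identify it with $M_{\hat G,L}^{ss}$, whereas your direct computation of $\pi_{=L,*}\cO$ via the weight decomposition $p_*\cO=\bigoplus_n\cL'^{\otimes n}$ is more economical and bypasses ZMT.

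Two points need correction. First, the induced action on $\pic^d$ is not the translation action: as the paper records, $\dt_*(\cM\otimes\hat{\mathcal P})=\dt_*(\hat{\mathcal P})\otimes\cM^{\otimes N}$ where $N\geq 1$ is the degree of the isogeny $\dt|_{Z^0}$. When $N>1$ the action is not free and $\pic^d$ is not a $\pic^0$-torsor, so your stated reason for the \'etale-local triviality fails. The conclusion survives: the map $\cM\mapsto L\otimes\cM^{\otimes N}$ is still \'etale surjective, and the explicit isomorphism $(\cM,\hat{\mathcal P})\mapsto(\cM,\cM^{-1}\otimes\hat{\mathcal P})$ gives the product decomposition after this base change---but you must argue this way rather than via a torsor. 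Second, a $\gm$-torsor is \emph{not} universally closed, so your claim about $p$ is false. Fortunately it is also unnecessary: universal closedness is a consequence of being an adequate moduli space \cite[Thm.~5.3.1]{ams}, not a defining axiom, so you only need adequate affineness of $\pi_{=L}$ (composite of affine with adequately affine) together with your structure-sheaf computation.

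For your flagged obstacle, there is a cleaner argument than restricting to the fiber over $u$: note that $\pi_{\cong L}\times\mathrm{id}_U$ is the base change of $\pi_{\cong L}$ along the faithfully flat projection $M_{\hat G,L}^{ss}\times U\to M_{\hat G,L}^{ss}$, and both adequate affineness and the isomorphism $\cO\to\pi_*\cO$ descend fpqc (the latter by flat base change for $\pi_*$ combined with faithful flatness).
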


\begin{proof}
We first show the claim for $\pi_{\cong L}$.

As the connected component $Z^0$ of the center $Z$ of $\hat G$ is 
isomorphic to $\gm$, the action
$Z^0\times \hat G\to \hat G$ mapping $(z,\hat g) \mapsto z\cdot \hat g$ induces a morphism 
\[
\psi:\Pic^0\times \Bun^{d}_{\hat G}\to\Bun^{d}_{\hat G} \text{ mapping }(\cM,\hat{\mathcal{P}})\mapsto \cM\otimes \hat{\mathcal{P}},
\]
and $\dt_{\ast}(\cM\otimes \hat{\mathcal{P}})=\dt_{\ast}(\hat{\mathcal{P}}) \otimes \cM^{\otimes N}$ for
some integer $N$ (which depends only on $\hat G$). 
Observe that $\psi$ also induces a morphism on the semistable loci.
We also obtain an action after rigidifying by central automorphisms,
i.e.,
\[
\bar{\psi}: \pic^0\times(\Bun^{d,ss}_{\hat G})^{\rig} \to (\Bun^{d,ss}_{\hat G})^{\rig} 
\]
where $\Bun^{d,ss}_{\hat G} \to (\Bun^{d,ss}_{\hat G})^{\rig}$ is the rigidification by the central $Z^0$-auto\-mor\-phisms, induced by $Z^0\to \hat G$. Both $\Bun^{d,ss}_{\hat G}$ and $(\Bun^{d,ss}_{\hat G})^{\rig}$
admit the same adequate moduli space $M^{d,ss}_{\hat G}$ as the rigidification is a $Z^0$-gerbe, see \cite[Thm. 5.1.5 and its proof]{acv_twisted_bundles}.

Taking determinant induces a morphism $(\Bun^{d,ss}_{\hat G})^{\rig}\to \pic^d$
and $(\Bun^{ss}_{\hat G,\cong L})^{\rig}$ is the fiber over $L \in \pic^d(k)$.
Thus, $(\Bun^{ss}_{\hat G,\cong L})^{\rig}$ is the rigidification
of $\Bun^{ss}_{\hat G,\cong L}$ by $Z^0$ and both admit the same adequate moduli
space $X$. Consider the commutative diagram
\[
\begin{tikzcd}
    \mathrm{Pic}^0\times (\Bun^{ss}_{\hat G,\cong L})^{\rig} \ar[d,"\text{ams}"'] \ar[rr,"\bar{\psi}"] & \ &
    (\Bun^{d,ss}_{\hat G})^{\rig}\ar[d,"\text{ams}"]\\
    \mathrm{Pic}^0 \times X  \ar[rr,"\text{by univ. prop. of ams}"] \ar[d,"\pr_1"'] & \ &
    M^{d,ss}_{\hat G}\ar[d,"\dt_{\ast}"]\\
    \mathrm{Pic}^0 \ar[r,"(-)^{\otimes N}"] & 
    \mathrm{Pic}^0 \ar[r,"- \otimes L"] & \mathrm{Pic}^d,
\end{tikzcd}
\]

We claim that the outer square is Cartesian. Then so is the bottom square
since the bottom morphism $\pic^0\to\pic^d$ is flat and the adequate moduli space is stable under flat base change \cite[Prop. 5.2.9 (1)]{ams}. 
A further base change along $\Spec(k)\xrightarrow{\mathcal{O}_C}\pic^0$
then shows that $X\cong M^{ss}_{\hat G,L}$.

To see that the outer square is Cartesian we construct a morphism
\begin{align*}
    \pic^0\times (\Bun^{ss}_{\hat G,\cong L})^{\rig}&\to \pic^0\times_{\pic^d}(\Bun^{d,ss}_{\hat G})^{\rig} \\
    (\cM,\hat{\mathcal{P}})&\mapsto (\cM,\cM\otimes \hat{\mathcal{P}})
\end{align*}
and it admits an inverse given by $(\cM,\hat{\mathcal{P}})\mapsto (\cM,\cM^{-1}\otimes \hat{\mathcal{P}})$.
This concludes the proof of $\pi_{\cong L}$
being an adequate moduli space.

The claim for $\pi_{=L}$ can be derived from this as follows.
By construction, the morphism $\Bun^{ss}_{\hat G, = L}\to \Bun^{ss}_{\hat
G,\cong L}$ is affine as it is a $\gm$-torsor. 
Thus, the composition
$\pi_{=L}:\Bun^{ss}_{\hat G, = L}\to M^{ss}_{\hat G,L}$ 
is adequately affine and the relative spectrum 
\[
\Bun^{ss}_{\hat G,=L}\to X':=\mathrm{Spec}\left(\pi_{=L,\ast}\mathcal{O}_{\Bun^{ss}_{\hat G,=L}}\right)
\]
is an
adequate moduli space \cite[Rmk. 5.1.2]{ams}. Note that $X'$ is normal and irreducible
as the same hold for $\Bun^{ss}_{\hat G,=L}$ \cite[Prop. 5.4.1]{ams}.

It is readily checked that 
on the underlying topological spaces we have an isomorphism
$|\Bun^{ss}_{\hat G, = L}|\xrightarrow{\sim} |\Bun^{ss}_{\hat G,\cong L}|$.
As an adequate moduli space has a unique closed 
point in each fiber \cite[Thm. 5.3.1]{ams}, we conclude that
$X'\to M^{ss}_{\hat G,L}$ is an affine morphism of normal varieties 
which induces a bijection on closed points.
By Zariski's main theorem we conclude that $X'\to M^{ss}_{\hat G,L}$
is an isomorphism.
 \end{proof}

We also have a twisted version of the determinant of cohomology.

\begin{definition}
    Let $1\to G\to \hat G\to \gm\to 1$ be a short exact sequence of connected reductive groups.
    Let $L$ be a line bundle on $C$. Then the \emph{determinant of cohomology} $\mathcal{L}_{\det,L}$
    on the stack $\Bun_{\hat G,=L}$ is defined as the pullback
    of $\mathcal{L}_{\det}$ on $\Bun_{\hat G}$ via $\Bun_{\hat G,=L}\to\Bun_{\hat G}$.
 \end{definition}

We recall that by \cite[Lem. 2.11]{weissmann-zhang} and the equality $\Bun_{\hat G}^{ss}=(\Bun_{\hat G})^{\Theta\text{-}ss}_{\mathcal{L}_{\det}}$ in the untwisted case, we have 
\[
\Bun_{\hat G,=L}^{ss}=(\Bun_{\hat G,=L})^{\Theta\text{-}ss}_{\mathcal{L}_{\det,L}},
\]
where
we defined $\Bun_{\hat G,=L}^{ss}$ as the pullback of the
semistable locus $\Bun_{\hat G}^{ss}$ (see \Cref{diagram: big twisted bundle
diagram}).
\begin{lemma}\label{lemma:app-Bun}
    Let $1\to G \to \hat{G} \xrightarrow{\dt} \mathbb{G}_m\to 1$
    be a short exact sequence of connected reductive groups, where $G$ is almost simple and simply connected.
    Let $L$ be a line bundle on $C$. Suppose $g_C\geq 2$. Then any unstable $\hat{G}$-bundle $\hat{\mathcal{P}}$ on $C$ with determinant $L$ is a uniform base point
    of $\Bun_{\hat G,=L}$
    and we have 
    \[
    \Fib_{\hat{\mathcal{P}}}(\Bun_{\hat G,=L})=\Bun_{\hat G,=L}.
    \]
\end{lemma}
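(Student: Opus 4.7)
The plan is to show that $\hat{\mathcal{P}}$ is a (global) uniform base point of $\Bun_{\hat G,=L}$; once this is established, the equality $\Fib_{\hat{\mathcal{P}}}(\Bun_{\hat G,=L}) = \Bun_{\hat G,=L}$ follows at once from \Cref{lemma: ubp implies schematically trivial} (uniform base points are schematically trivial, so $\Sdim_{\hat{\mathcal{P}}}(\Bun_{\hat G,=L})=0$) combined with the observation recorded after \Cref{def: schematic fiber} that $\Sdim_x(\mathscr{X})=0$ is equivalent to $\Fib_x(\mathscr{X}) = \mathscr{X}$.

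To verify the uniform base point claim, let $\mathscr{Z}\subset \Bun_{\hat G,=L}$ be an arbitrary non-empty prime divisor; I will show $\hat{\mathcal{P}} \in \mathscr{Z}$. Since $\Bun_{\hat G,=L}$ is smooth and hence locally factorial, $\mathscr{Z}$ is Cartier and is the vanishing locus of a global section $s$ of some line bundle $\mathcal{M}$. The strategy is to combine two standard inputs. \emph{(i)} Under the hypotheses that $G$ is almost simple and simply connected and $g_C \geq 2$, the Picard group $\pic(\Bun_{\hat G,=L})$ is cyclic of rank one up to finite torsion, generated by the class $[\mathcal{L}_{\det,L}]$ (by Biswas--Hoffmann and the Picard group computations for moduli of $G$-bundles in the simply-connected almost-simple case), and $\mathcal{L}_{\det,L}$ descends to the ample theta bundle on the projective moduli $M^{ss}_{\hat G,L}$. \emph{(ii)} The identification $\Bun^{ss}_{\hat G,=L} = (\Bun_{\hat G,=L})^{\Theta\text{-}ss}_{\mathcal{L}_{\det,L}}$ stated just before the lemma, together with the standard GIT base-locus principle, shows that every global section of a strictly positive tensor power of $\mathcal{L}_{\det, L}$ vanishes on the unstable locus — equivalently, any $\Theta$-destabilizer $f:\Theta \to \Bun_{\hat G,=L}$ of $\hat{\mathcal{P}}$ has $\wt(f^*\mathcal{L}_{\det,L})>0$, and pullback along $f$ kills such sections.

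Combining these: by \emph{(i)}, a suitable tensor power satisfies $\mathcal{M}^{\otimes N} \cong \mathcal{L}_{\det, L}^{\otimes k}$ with $k > 0$. The strict positivity $k > 0$ is forced because $s^{\otimes N}$ is a non-zero global section of $\mathcal{M}^{\otimes N}$ cutting out a non-trivial divisor, whereas for $k<0$ the descent to $M^{ss}_{\hat G,L}$ yields an anti-ample bundle on a positive-dimensional projective variety (using $g_C \geq 2$) which has no non-zero global sections, and for $k=0$ only constants, which cut out no divisor. Applying \emph{(ii)} to $s^{\otimes N} \in H^0(\Bun_{\hat G,=L}, \mathcal{L}_{\det, L}^{\otimes k})$ yields $s(\hat{\mathcal{P}})^{\otimes N} = 0$, hence $s(\hat{\mathcal{P}}) = 0$ and $\hat{\mathcal{P}} \in \mathscr{Z}$. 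The main obstacle is input \emph{(i)}: one must assemble from the literature the precise Picard group description of $\Bun_{\hat G,=L}$ and the ampleness of the descended theta bundle in the generality allowed by the hypotheses, so as to confirm that every effective Cartier divisor class on $\Bun_{\hat G,=L}$ is a strictly positive multiple of $[\mathcal{L}_{\det,L}]$ up to torsion.
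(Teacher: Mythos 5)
Your proposal is correct and follows essentially the same route as the paper's proof: smoothness gives that every prime divisor is cut out by a global section of a line bundle, $\pic(\Bun_{\hat G,=L})\cong\mathbb{Z}$ generated by $\mathcal{L}_{\det,L}$ forces such a section to live (up to positive powers) in a positive power of $\mathcal{L}_{\det,L}$, and the identification of the semistable locus with the $\Theta$-semistable locus shows the section vanishes at every unstable bundle. The only cosmetic difference is in ruling out negative powers (you use anti-ampleness of the descended bundle on the projective moduli space, while the paper uses the emptiness of the $\Theta$-semistable locus of $\mathcal{L}_{\det,L}^{\otimes e}$ for $e<0$ together with $H^0(\Bun_{\hat G,=L},\mathcal{O})=k$).
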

\begin{proof}

    Let $\hat{\mathcal{P}}$ be an unstable $\hat G$-bundle with determinant $L$.
    If $\hat{\mathcal{P}}$ is a uniform base point of $\Bun_{\hat G,=L}$, then its schematic fiber is the entire stack by 
    \Cref{prop:schematic fiber as base locus} as every Zariski open neighborhoods of a uniform base point is big.
    
    Since $\Bun_{\hat G,=L}$ is smooth, every prime divisor on $\Bun_{\hat G,=L}$
    is the vanishing locus of a global section of a line bundle on $\Bun_{\hat G,=L}$. 
    Recall that the non-vanishing locus of a global section of a line bundle is contained in its $\Theta$-semistable locus (see \cite[Lem. 2.4, the proof of (ii) $\Rightarrow$ (iii)]{weissmann-zhang}). We know the $\Theta$-semistable loci of 
    $\mathcal{L}_{\det,L}$ and its inverse (see \cite[Subsect. 2.2.1 and Lem. 2.11]{weissmann-zhang}):
    \[(\Bun_{\hat G,=L})^{\Theta\text{-}ss}_{\mathcal{L}_{\det,L}^{\otimes e}} =
    \begin{cases}
          \emptyset &\text{ if }e < 0,\\
          \Bun_{\hat G,=L} &\text{ if }e=0,\\
          \Bun_{\hat G,=L}^{ss} &\text{ if }e>0.
    \end{cases}
    \]
    By \cite[Cor. 4.4.5, Thm. 4.2.1 (i), and (5) just before Prop. 4.2.3]{biswas-hoffmann}
    we have $\pic(\Bun_{\hat{G},=L}) \cong \mathbb{Z}$
    and $H^0(\Bun_{\hat G,=L},\mathcal{O}_{\Bun_{\hat G,=L}})=k$.
    Thus, up to positive powers only $\mathcal{L}_{\det,L}$ can possibly admit global sections with non-trivial vanishing locus.
    In particular, the vanishing locus of a global section of a line bundle $\mathcal{L}$
    on $\Bun_{\hat G,=L}$ defines a prime divisor only if $\mathcal{L}^{\otimes N}\cong \mathcal{L}_{\det,L}^{\otimes e}$
    for some positive integers $N,e>0$.
    Then we conclude that the complement of 
    every prime divisors is contained in $\Bun_{\hat G,=L}^{ss}$, and an unstable $\hat G$-bundle is
    contained in every prime divisor.
\end{proof}

Unschematicness of a moduli space at a closed point can be checked via the schematic
fiber by showing that it contains more than one closed point, see Theorem \ref{prop: schematic points for adequate moduli spaces}.
At an unstable bundle in $\Bun_{\hat G,=L}$ the schematic fiber is the entire stack by \Cref{lemma:app-Bun}.
While $\Bun_{\hat G,=L}$ is not quasi-compact and has no closed point,
closed points are abundant for its quasi-compact opens:
\begin{lemma}
\label{lemma: infinitely many closed points}
Let $1\to G\to \hat G\to \gm\to 1$ be a short exact sequence of connected reductive groups. 
Let $L$ be a line bundle on $C$.
Let $\mathscr{U}\subseteq \Bun_{\hat G,=L}$ be a quasi-compact open substack. 
Suppose $g_C \geq 2$. Then $\mathscr{U}$ has infinitely many closed points.
\end{lemma}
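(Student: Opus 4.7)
The plan is to exhibit infinitely many stable $\hat G$-bundles inside $\mathscr{U}$ and check that each one gives a closed point of $\mathscr{U}$.

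First, I would assume $\mathscr{U}$ is non-empty (otherwise the statement is vacuous). Since $\Bun_{\hat G,=L}$ is irreducible (recorded just before \Cref{lemma: moduli fixing det agree}) and its stable locus $\Bun^{s}_{\hat G,=L}$ is a non-empty open substack for $g_C\geq 2$ (via \Cref{subsubsection: stability}), the stable locus is dense. Hence $\mathscr{U}\cap\Bun^{s}_{\hat G,=L}$ is a non-empty open substack of $\mathscr{U}$.

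Next, by \Cref{lemma: moduli fixing det agree} the map $\pi_{=L}\colon\Bun^{ss}_{\hat G,=L}\to M^{ss}_{\hat G,L}$ is an adequate moduli space, and by \Cref{subsubsection: stability} its restriction to the stable locus induces a homeomorphism $|\Bun^{s}_{\hat G,=L}|\xrightarrow{\sim}|M^{s}_{\hat G,L}|$. Consequently $V:=\pi_{=L}(\mathscr{U}\cap\Bun^{s}_{\hat G,=L})$ is a non-empty open subset of $M^{s}_{\hat G,L}$. A dimension count combining the formulas in \Cref{subsubsection: stability} with $\dim\pic^d = g_C$ and the fiber structure from diagram \eqref{diagram: big twisted bundle diagram} gives $\dim M^{ss}_{\hat G,L}=\dim(G)(g_C-1)>0$ since $G$ is almost simple and $g_C\geq 2$. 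Therefore the irreducible projective variety $M^{ss}_{\hat G,L}$ is positive-dimensional, so the non-empty open $V$ contains infinitely many $k$-points.

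Each such $k$-point $y\in V$ lifts uniquely through the homeomorphism to a stable bundle $\mathcal{P}_y\in\mathscr{U}\cap\Bun^{s}_{\hat G,=L}$. To finish, I would verify that $\mathcal{P}_y$ is a closed point of the entire stack $\Bun_{\hat G,=L}$: it is already a closed point of $\Bun^{ss}_{\hat G,=L}$, being the unique closed point of the fiber $\pi_{=L}^{-1}(y)$, and openness of semistability rules out any unstable bundle having $\mathcal{P}_y$ as a specialization. Hence each $\mathcal{P}_y$ is closed in $\mathscr{U}$, yielding infinitely many distinct closed points.

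The main obstacle is justifying that the image $V$ is open in $M^{s}_{\hat G,L}$; this hinges on the homeomorphism $\Bun^{s}\to M^{s}$ from \Cref{subsubsection: stability}. Once this is in hand, the positive-dimensionality of $M^{ss}_{\hat G,L}$ and the closedness of stable bundles in the ambient stack are routine.
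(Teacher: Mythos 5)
There is a genuine gap in the last step: the claim that each stable bundle $\mathcal{P}_y \in \mathscr{U}$ is a closed point of $\mathscr{U}$ fails whenever $\mathscr{U}$ is not contained in the semistable locus, and that is precisely the case the lemma is needed for (in the proof of \Cref{thm: semistable and schematic} it is applied to quasi-compact opens containing unstable bundles). A stable bundle is a closed point of $\Bun^{ss}_{\hat G,=L}$, but not of the ambient stack $\Bun_{\hat G,=L}$: writing a stable bundle $\mathcal{E}$ as an extension $0 \to \mathcal{L}' \to \mathcal{E} \to \mathcal{L}'' \to 0$ and scaling the extension class produces a family over $\mathbb{A}^1$ that is constantly $\mathcal{E}$ away from $0$ and degenerates at $0$ to the split, destabilized object; hence unstable points lie in $\overline{\{\mathcal{E}\}}$. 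Your appeal to ``openness of semistability'' runs in the wrong direction: openness of $\Bun^{ss}$ guarantees that \emph{generizations} of semistable points are semistable, while specializations are governed by the closedness of the unstable locus and may well leave the semistable locus. So if $\mathscr{U}$ contains such a degeneration of $\mathcal{P}_y$, then $\mathcal{P}_y$ is not closed in $\mathscr{U}$, and in general your construction yields no closed points at all.

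The paper's proof circumvents this by using the Harder--Narasimhan stratification: since $\mathscr{U}$ is quasi-compact it meets only finitely many strata, so some stratum $\mathscr{S}_{i,=L}$ has image closed in $\mathscr{U}$; one then produces infinitely many closed points inside that closed stratum (coming from stable bundles for the Levi quotient $M_i$, whose moduli space is positive-dimensional for $g_C \geq 2$), and closedness in a closed substack of $\mathscr{U}$ gives closedness in $\mathscr{U}$. Your density and dimension-count steps are fine as far as they go, and your argument does establish the lemma in the special case $\mathscr{U} \subseteq \Bun^{ss}_{\hat G,=L}$ (where the deepest stratum is the semistable one), but not in the generality required.
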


\begin{proof}
We recall the Harder-Narasimhan weak $\Theta$-stratification in the following
(cf. \cite{gurjar2020hardernarasimhanstacksprincipalbundles} or \cite{gaugedmaps}). 
This amounts to a morphism $\varphi: \sqcup_{i \in I} \mathscr{S}_i
\to \Bun_{\hat{G}}$, where each $\mathscr{S}_i$ is an algebraic stack which can be
described as follows.

Let $P \subseteq \hat G$ be a parabolic subgroup with Levi quotient $P \twoheadrightarrow M$. The induced morphism
$\Bun_{P} \to \Bun_{M}$ is quasi-compact with connected fibers (see the proof of
\cite[Prop. 5.1]{hoffmann-moduli-stacks}), and a choice of splitting $M \to P$
induces a section on the level of stacks $\Bun_M \to \Bun_P$. 
For a given degree $d \in \pi_1(M)$, we
denote by $\Bun_P^{d,ss} \subseteq \Bun_P$ the preimage of the open substack
$\Bun_M^{d,ss} \subseteq \Bun_{M}$ of semistable $M$-bundles of degree $d$ on $C$ 
under the morphism $\Bun_{P} \to \Bun_{M}$.
Then each $\mathscr{S}_i$ is of the form $\Bun_{P_i}^{d_i,ss}$ for some
parabolic $P_i \subseteq \hat G$ with Levi quotient $M_i$ and some degree $d_i \in
\pi_1(M_i)$. In particular, $\mathscr{S}_i$ is smooth and integral, and the
corresponding morphism $\mathscr{S}_i \to \Bun_{M_i}^{d,ss}$ is of finite type and surjective. The corresponding morphism $\varphi: \sqcup_{i \in I} \mathscr{S}_i \to \Bun_{\hat{G}}$ induced by extension of structure group to $\hat G$ satisfies the following:
\begin{enumerate}[(a)]
    \item For each index $i \in I$, the image $\varphi(|\mathscr{S}_i|) \subseteq |\Bun_{\hat G}|$ at the level of topological spaces is locally closed. We denote by $\varphi(\mathscr{S}_i) \subseteq \Bun_{\hat G}$ the corresponding reduced locally closed substack.
    \item The induced morphism $\varphi_i: \mathscr{S}_i \to \varphi(\mathscr{S}_i)$ is finite and radicial.
    \item There is an ordering on the set $I$ such that $|\Bun_{\hat G}| = \sqcup_{i \in I} |\varphi(\mathscr{S}_i)|$ is a stratification by locally closed subsets.
\end{enumerate}

Given a line bundle $L$ on $C$, we may base-change each stack $\mathscr{S}_i$
under the morphism $\Bun_{\hat G, = L} \to \Bun_{\hat G}$ in order to get a
smooth integral algebraic stack $\mathscr{S}_{i , =L}$ equipped with a
surjective morphism $\mathscr{S}_{i, = L} \to \Bun_{M_i, =L}^{d_i, ss}$.
Again, the corresponding morphism 
$\varphi: \sqcup_{i \in I} \mathscr{S}_{i, = L} \to \Bun_{\hat{G}, =L}$ 
induced by extension of structure group satisfies
properties (a), (b), and (c) as above.
Note that this notation coincides with the notation on fixing the determinant
introduced earlier, since the composition
$P\to \hat G\to \gm$ for parabolic subgroup factors via its Levi quotient.

We are now ready to proceed with the proof. Let $\mathscr{U} \subseteq \Bun_{\hat
G, =L}$ be a quasi-compact open substack. Then there exists a stratum
$\mathscr{S}_{i, = L}$ such that $\varphi(\mathscr{S}_{i, =L}) \cap \mathscr{U}$
is closed in $\mathscr{U}$. 
Consider the open preimage
$\varphi_i^{-1}(\mathscr{U}) \subseteq \mathscr{S}_{i, =L}$. It follows from the
properties above that the morphism $\varphi_i: \varphi^{-1}_i(\mathscr{U}) \to
\varphi(\mathscr{S}_{i, =L}) \cap \mathscr{U} \hookrightarrow \mathscr{U}$ is
finite and radicial. Therefore, it suffices to show that the stack
$\varphi^{-1}_i(\mathscr{U})$ has infinitely many closed points. 

Since the
morphism $\mathscr{S}_{i, =L} \to \Bun_{M_i, =L}^{d_i, ss}$ is of finite type and
surjective with integral target, it follows by Chevalley's theorem on
constructibility that the image of $\varphi_i^{-1}(\mathscr{U}) \to \Bun_{M_i,
=L}^{ss}$ contains a non-empty open substack $\mathscr{V} \subseteq \Bun_{M_i,
=L}^{d_i,ss}$. 

It suffices to show that $\mathscr{V}$ contains infinitely many
$k$-points that are closed in $\Bun_{M_i, =L}^{d_i,ss}$. Indeed, for each such closed
point $v \in \mathscr{V}(k)$ there is a closed point of the non-empty
quasi-compact and quasi-separated fiber $\varphi_i^{-1}(\mathscr{U})_v$ which
yields a closed point of $\varphi_i^{-1}(\mathscr{U})$. 

Note that the
intersection $\mathscr{V} \cap \Bun_{M_i, =L}^{d_i,s}$ with the stable locus is
an open substack of $\Bun_{M_i, =L}^{d_i, ss}$ such that every $k$-point of
$\mathscr{V} \cap \Bun_{M_i, =L}^s$ is closed in $\Bun_{M_i, =L}^{d_i, ss}$. Hence,
it suffices to show that $\mathscr{V} \cap \Bun_{M_i, =L}^{d_i,s}$ has infinitely many
$k$-points. 

First, we note that $\Bun_{M_i, =L}^{d_i,s}$ is non-empty from our
assumption that the genus is at least $2$ (this is proven in \cite[Prop.
3.5]{holla2002parabolicreductionsprincipalbundles} for every component of the
stack $\Bun_{\hat G}$, which implies the non-emptiness for all fibers $\Bun_{\hat
G, =L}$ of $\Bun_{\hat G} \to \Pic$ by translating by elements of
$\Pic^0$). Since $\mathscr{V} \cap \Bun_{M_i, =L}^{d_i,s}$ is an open dense
substack of the integral stack $\Bun_{M_i, =L}^{d_i, s}$ which admits an adequate
moduli space, it suffices to show that $\Bun_{M_i, =L}^{d_i, s}$ has infinitely many
closed points. 
 
As an adequate moduli space morphism has a unique closed point in a fiber over a closed point,
it suffices to show that the corresponding moduli space
$M^{d_i,s}_{M_i,=L}$ has infinitely many closed points. In this case, the moduli space has
dimension $\dim(\Bun_{M_i, =L}^{d_i, s}) + \dim(Z_{M_i}) -1$,
because the moduli space
of stable bundles is a homeomorphism and the stabilizers of $\Bun_{M_i,
=L}^{d_i, s}$ all have dimension $\dim(Z_{M_i})-1$. 
Hence, by a standard deformation theory argument, $M^{d_i,s}_{M_i,=L}$
is a variety of positive dimension $(\dim(M_i)-1)(g_C-1) + \dim(Z_{M_i})-1$, 
which implies that it has infinitely many points.
\end{proof}

\subsubsection{A reduction lemma}
To study the schematic fiber at an unstable bundle in $\Bun_{G}$ for a general reductive group $G$,
we reduce it to a simpler
setting using the structure theory of reductive groups. We need to study the behavior of the schematic
fiber under the morphism 
$\varphi:\Bun_{G'}^{d'}\to \Bun_{G}^d$ 
induced by a central isogeny $G'\twoheadrightarrow G$ with kernel $\mu$.
By \cite[Ex. 5.1.4]{biswas-hoffmann} the morphism $\varphi$ is a $\Bun_{\mu}$-torsor
in the sense of \cite[Def. 5.1.3]{biswas-hoffmann}.

\begin{lemma}
\label{lemma: Bun_mu torsor finite fibers and universally closed}
    Let $\mathscr{X}$ be an algebraic stack locally of finite type over $k$.
    Further, let $\varphi:\mathscr{Y}\to\mathscr{X}$ be a $\Bun_{\mu}$-torsor, 
    where $\mu$ is a finite multiplicative group.
    Then $\varphi$ has finite fibers and is universally closed.

    If $\mathscr{X}$ admits an adequate moduli space $\mathscr{X} \to X$,
    then $\mathscr{Y}$ admits an adequate moduli space $\mathscr{Y} \to Y$
    and the induced morphism $Y\to X$ is finite.
\end{lemma}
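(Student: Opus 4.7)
My plan is to reduce everything to the concrete structure of $\Bun_\mu$. Since $\mu$ is a finite multiplicative group scheme over the algebraically closed field $k$, the stack $\Bun_\mu$ decomposes as a finite disjoint union $\bigsqcup_i \rB\mu_i$ of classifying stacks of finite multiplicative groups: indeed $\pi_0(\Bun_\mu) = H^1_{\mathrm{fppf}}(C,\mu)$ is finite (by Kummer theory and finiteness of the $n$-torsion in $\pic(C)$), and the automorphism group of any $\mu$-torsor is finite since $\mu$ itself is. Consequently $\Bun_\mu$ is proper and adequately affine over $\Spec(k)$, has finite underlying topological space, and admits an adequate moduli space that is a finite $k$-scheme.

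By the definition of torsor \cite[Def.~5.1.3]{biswas-hoffmann}, there exists an fppf cover $g:\mathscr{X}' \to \mathscr{X}$ together with an isomorphism $\mathscr{Y} \times_{\mathscr{X}} \mathscr{X}' \cong \Bun_\mu \times_k \mathscr{X}'$ intertwining $\varphi$ with the second projection $\pr_2$. From the first step, $\pr_2$ has finite fibers, is universally closed, and is adequately affine. All three properties are fppf-local on the target, so they descend to $\varphi$ itself, which establishes the first assertion.

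For the moduli space statement, I compose with $\pi:\mathscr{X}\to X$ to obtain $\psi := \pi\circ\varphi : \mathscr{Y} \to X$, which is again adequately affine and universally closed. Setting $Y := \Spec_X(\psi_*\mathcal{O}_\mathscr{Y})$ should then yield the adequate moduli space of $\mathscr{Y}$ via the standard construction \cite[Rmk.~5.1.2]{ams}. Applying cohomology and base change to the fppf-local trivialization, one sees that $\varphi_*\mathcal{O}_\mathscr{Y}$ is fppf-locally on $\mathscr{X}$ isomorphic to $H^0(\Bun_\mu,\mathcal{O})\otimes_k \mathcal{O}_\mathscr{X}$, so it is a finite étale $\mathcal{O}_\mathscr{X}$-algebra of rank $n=|\pi_0(\Bun_\mu)|$; in particular it is a direct summand of a finite free $\mathcal{O}_\mathscr{X}$-algebra.

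The main obstacle is to deduce from this that $\psi_*\mathcal{O}_\mathscr{Y} = \pi_*\varphi_*\mathcal{O}_\mathscr{Y}$ is finite as an $\mathcal{O}_X$-algebra, which is what yields the finiteness of $Y \to X$. The relevant tool is the fact that pushforward along an adequate moduli space preserves coherence of $\mathcal{O}_\mathscr{X}$-modules, applied here to the rank-$n$ algebra $\varphi_*\mathcal{O}_\mathscr{Y}$. Once this is established, the universal property of relative Spec and the adequate affineness of $\psi$ identify $Y$ as the adequate moduli space of $\mathscr{Y}$ compatible with $Y\to X$ finite, completing the proof.
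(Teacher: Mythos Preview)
Your argument for the first assertion is essentially the paper's: both reduce by fppf descent to the structure morphism $\Bun_\mu \to \Spec(k)$ (the paper uses $\mathscr{Y}\to\mathscr{X}$ itself as the trivializing cover, but this is immaterial). For the existence of $Y$ you both take $Y=\Spec_X(\psi_*\mathcal{O}_\mathscr{Y})$ via \cite[Rmk.~5.1.2]{ams}, after noting that adequate affineness is fppf-local on the target.

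Where you diverge is in proving that $Y\to X$ is finite. You argue sheaf-theoretically: $\varphi_*\mathcal{O}_\mathscr{Y}$ is a finite locally free $\mathcal{O}_\mathscr{X}$-algebra of rank $|\pi_0(\Bun_\mu)|$, and coherence preservation under the adequate pushforward $\pi_*$ then forces $\psi_*\mathcal{O}_\mathscr{Y}$ to be coherent on $X$. The paper instead argues topologically: since $Y\to X$ is already affine, separated, and locally of finite type, by \cite[\href{https://stacks.math.columbia.edu/tag/04NZ}{Tag 04NZ}]{sp} it suffices to show it is universally closed; this follows from a diagram chase using that $\mathscr{Y}\to Y$ is surjective and universally closed and that $\mathscr{Y}\to\mathscr{X}\to X$ is a composite of universally closed morphisms. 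Your route extracts the finer information that $\deg(Y/X)\le |H^1(C,\mu)|$, at the cost of computing $\varphi_*\mathcal{O}_\mathscr{Y}$ and invoking the coherence theorem; the paper's route needs neither, only the universal closedness already established in the first paragraph.
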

\begin{proof}
    Both properties can be checked after an fppf base change. 
    As a $\Bun_{\mu}$-torsor is fppf by definition, 
    it suffices to check finite fibers and universally closed for the base change 
    $\mathscr{Y}\times_{\mathscr{X}}\mathscr{Y}\cong 
    \mathscr{Y}\times \Bun_{\mu}$.
    As $\Bun_{\mu}\to \Spec(k)$ has the desired properties,
    so does the base change 
    $\pr_{\mathscr{Y}}: \mathscr{Y}\times \Bun_{\mu}\to \mathscr{Y}$.

    Assume that $\mathscr{X}$ admits an adequate moduli space $\pi:\mathscr{X}\to X$.
    As adequate affineness can be checked fppf locally, we find that
    $\psi:\mathscr{Y}\to Y:=\Spec((\pi \circ \varphi)_{\ast}\mathcal{O}_{\mathscr{Y}})$ 
    is an adequate moduli space, 
    for more details see \cite[Lem. 2.7]{weissmann-zhang}.
    Then $Y\to X$ is affine by definition.
    
    To show it is finite, it remains to show that it is universally closed
    \cite[\href{https://stacks.math.columbia.edu/tag/04NZ}{Tag 04NZ}]{sp}
    as $Y$ and $X$ are locally of finite type, see \cite[Thm. 6.3.3]{ams}
    (applied to an open covering where each open is quasi-compact),
    and an affine morphism is separated.
    This follows directly from a diagram chase 
    and the fact that $\mathscr{Y}\to Y$ is surjective and universally closed
    and that $\mathscr{Y}\to\mathscr{X}\to X$ is universally closed.
\end{proof}

Such a $\Bun_{\mu}$-torsor appears in the reduction of $\Bun_G^d$ to a product of twisted almost simple
simply connected $\Bun_{\hat G_i,=L}$ and $\Bun_T$ for a torus $T$.
We make use of this and it is helpful to fix the notation.
This reduction set-up was also used in the proof of \cite[Thm. A]{weissmann-zhang}
and goes back to \cite{biswas-hoffmann}.

\begin{situation}
\label{situation: reduction lemma}
    Let $G$ be a connected reductive group and $d\in\pi_1(G)$. Let $Z^0$ be the connected
    component of the center $Z$ of $G$ and let $\tilde G\twoheadrightarrow [G,G]$
    be the universal cover of the derived subgroup of $G$. 

    By \cite[Lem. 5.3.2]{biswas-hoffmann} and the discussion following it,
    there exists an extension 
    \[
    1\to \tilde G \to \hat G\xrightarrow{\dt}\gm \to 1
    \]
    together with an extension $\hat\psi:\hat G\to G$ of $\tilde G\to G$ mapping $1\in\pi_1(\hat G)\cong \mathbb{Z}$ 
    to $d\in\pi_1(G)$.
    The morphism
    $Z^0\times \hat G\to G\times \gm$ given by $(\zeta, \hat g)\mapsto (\zeta \cdot \hat
    \psi(\hat g),\dt(\hat g))$ is a central isogeny, say with kernel $\mu$,
    and the induced morphism
    \begin{equation}
        \label{equation: torsor}
        \varphi:\Bun_{Z^0}^0\times \Bun_{\hat G}^{1}\to \Bun^d_G \times \Pic^1
    \end{equation}
    is a $\Bun_{\mu}$-torsor. Pulling back along a fixed line bundle $\Spec(k)\xrightarrow{L} \Pic^1$
    we obtain a $\Bun_{\mu}$-torsor
    \[
    \varphi_L: \mathscr{X}_L:=\Bun_{Z^0}^0\times \Bun_{\hat G,=L}\to \Bun^d_G.
    \]
    Let $G_1,\dots, G_n$ be the almost simple
    simply connected factors of $\tilde G$.
    Then by \cite[p. 57, (11)]{biswas-hoffmann} 
    there exist extensions $1\to G_i\to\hat G_i\to \gm\to 1$ and
    we have $\Bun_{\hat G,=L}\cong\prod_{i=1}^n \Bun_{\hat G_i,=L}$.
    Setting $T=Z_0$ we obtain a $\Bun_{\mu}$-torsor
    \begin{equation}
        \label{equation: twisted torsor}
        \varphi_L:\mathscr{X}_L:=\Bun^0_{T}\times \prod_{i=1}^n \Bun_{\hat G_i,=L}\to \Bun^d_G.
    \end{equation}    
We note that, since $\varphi$ is induced by a central
    isogeny,
    for any point $x=(\mathcal{M},\hat{\mathcal{P}}) \in \mathscr{X}$, the bundle $\hat{\mathcal{P}}$ is semistable if and only if 
    $\varphi(x)$ is semistable. The same holds for
    any point $x=(\mathcal{M},\hat{\mathcal{P}}_1,\dots,\hat{\mathcal{P}}_n) \in \mathscr{X}_L$:
    the bundles 
    $\hat{\mathcal{P}}_1,\dots,\hat{\mathcal{P}}_n$ are semistable if and only if $\varphi_L(x)$ is semistable.
\end{situation}

\subsection{Semistability and schematic moduli spaces}

We obtain a more robust proof for the semistable locus being canonical
in the language of the schematic fiber which
strengthens the result \cite[Thm. A]{weissmann-zhang}
to include weak topological moduli spaces in the sense of \Cref{remark: more general contexts}.
\begin{theorem}
    \label{thm: semistable and schematic}
    Let $C$ be a smooth projective connected curve of genus $g_C \geq 2$. 
    Let $G$ be a connected reductive group.
    Then the semistable locus $\mathscr{B}un_G^{ss}$ is the unique maximal open substack of $\mathscr{B}un_G$ that admits a schematic weak topological moduli space.
\end{theorem}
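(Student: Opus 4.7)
One direction is immediate: $\Bun_G^{d,ss}$ admits the adequate moduli space $M_G^{d,ss}$ recalled in \Cref{subsubsection: stability}, which is a normal projective variety, hence a scheme. As adequate moduli spaces are weak topological moduli spaces (see the examples after \Cref{remark: more general contexts}), the semistable locus has the required property, and it remains to establish maximality.

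Suppose $\mathscr{U}\subseteq \Bun_G^d$ is an open substack admitting a schematic weak topological moduli space $\pi\colon\mathscr{U}\to U$. Assume for contradiction that $\mathscr{U}$ contains an unstable $G$-bundle $\mathcal{P}\in\mathscr{U}(k)$. After passing to a quasi-compact affine open neighborhood of $\pi(\mathcal{P})$ in $U$, we may assume $\mathscr{U}$ and $U$ are quasi-compact. Let $x\in\mathscr{U}(k)$ be the unique closed point of the fiber $\pi^{-1}(\pi(\mathcal{P}))$; since $\mathcal{P}$ specializes to $x$ in $\mathscr{U}$ and the non-semistable locus of $\Bun_G^d$ is closed, $x$ is itself unstable. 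The key input from \Cref{prop: schematic points for adequate moduli spaces} is that the schematicity of $\pi(x)\in U$ forces the equality $\Fib_x(\mathscr{U}) = \pi^{-1}(\pi(x))$ with $x$ as its unique closed point.

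The next step is to transport the problem through the $\Bun_\mu$-torsor $\varphi_L\colon\mathscr{X}_L = \Bun_T^0\times\prod_i \Bun_{\hat G_i,=L}\to \Bun_G^d$ of \Cref{situation: reduction lemma}, setting $\mathscr{V}:=\varphi_L^{-1}(\mathscr{U})$. By \Cref{lemma: Bun_mu torsor finite fibers and universally closed}, $\varphi_L$ is quasi-compact and universally closed with finite fibers, so $\mathscr{V}$ is quasi-compact, $\varphi_L^{-1}(x)$ is finite, and $\varphi_L$ sends closed points to closed points. I select $y\in\varphi_L^{-1}(x)$ that is closed in $\mathscr{V}$ (possible since $\Bun_\mu$ has finitely many $k$-points, all of which are closed). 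Writing $y=(\mathcal{M},\hat{\mathcal{P}}_1,\ldots,\hat{\mathcal{P}}_n)$, the preservation of semistability by $\varphi_L$ (\Cref{situation: reduction lemma}) ensures some factor $\hat{\mathcal{P}}_{i_0}$ is an unstable $\hat G_{i_0}$-bundle of determinant $L$. By \Cref{lemma:app-Bun}, $\Fib_{\hat{\mathcal{P}}_{i_0}}(\Bun_{\hat G_{i_0},=L})=\Bun_{\hat G_{i_0},=L}$, and the product formula \Cref{prop: schematic fiber and products} yields
\[
    \Fib_y(\mathscr{X}_L)_{\red}\;\supseteq\;\{\mathcal{M}\}\times\prod_{i\neq i_0}\{\hat{\mathcal{P}}_i\}\times\Bun_{\hat G_{i_0},=L},
\]
whose intersection with $\mathscr{V}$ gives a non-empty quasi-compact open substack $\mathscr{W}\subseteq\Bun_{\hat G_{i_0},=L}$ lying inside $\Fib_y(\mathscr{V})$.

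The desired contradiction arises from two opposing counts of closed points of $\Fib_y(\mathscr{V})$. On one side, a direct pullback-of-finite-type-algebras argument shows that $\varphi_L(\Fib_y(\mathscr{V}))\subseteq \Fib_x(\mathscr{U})$; since $x$ is the unique closed point of $\Fib_x(\mathscr{U})$ and $\varphi_L$ sends closed points to closed points with finite fibers, the closed points of $\Fib_y(\mathscr{V})$ are contained in the finite set $\varphi_L^{-1}(x)\cap\Fib_y(\mathscr{V})$. On the other side, \Cref{lemma: infinitely many closed points} applied to the quasi-compact open $\mathscr{W}$ produces infinitely many closed points of $\mathscr{W}$. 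The hardest part of the argument, and the main obstacle, is promoting these closed points of $\mathscr{W}$ to genuine closed points of $\Fib_y(\mathscr{V})$: given a proper specialization of a slice point within $\mathscr{V}$, the openness of $\mathscr{V}$ (stability under generalization) is used to show that the $i_0$-component of the specialization must remain inside $\mathscr{W}$ and hence be trivial by closedness in $\mathscr{W}$, while the closedness of $y$ in $\mathscr{V}$ is used to pin down the remaining factors. Once this bookkeeping is complete, the two counts are incompatible, yielding the contradiction and completing the proof of maximality.
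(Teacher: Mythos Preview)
Your approach is essentially the same as the paper's: both reduce to the product $\mathscr{X}_L$ via the $\Bun_\mu$-torsor of \Cref{situation: reduction lemma}, invoke \Cref{lemma:app-Bun} and \Cref{prop: schematic fiber and products} to see that the schematic fiber at an unstable point contains a full factor $\Bun_{\hat G_{i_0},=L}$, feed a quasi-compact open of that factor into \Cref{lemma: infinitely many closed points}, and push the resulting infinitude of closed points back down via \Cref{lemma: Bun_mu torsor finite fibers and universally closed} to contradict \Cref{prop: schematic points for adequate moduli spaces}.

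There is one wobble in your final ``promotion'' step. Your generalization argument correctly shows that if $w\in\mathscr{W}$ specializes to $w'\in\mathscr{V}$, then the slice point $(\mathcal{M},\hat{\mathcal{P}}_1,\ldots,\pr_{i_0}(w'),\ldots,\hat{\mathcal{P}}_n)$ is a generalization of $w'$, hence lies in the open $\mathscr{V}$, hence lies in $\mathscr{W}$; closedness of $w$ in $\mathscr{W}$ then forces $\pr_{i_0}(w')=\pr_{i_0}(w)$. That already suffices: distinct closed points of $\mathscr{W}$ have distinct $i_0$-components, so their specializations in $\mathscr{V}$ (which land in the closed $\Fib_y(\mathscr{V})$) are pairwise distinct, and you are done. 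The further claim that ``the closedness of $y$ in $\mathscr{V}$ pins down the remaining factors'' is both unnecessary and does not follow as stated --- the specialization $(\mathcal{M}',\hat{\mathcal{P}}_1',\ldots,\hat{\mathcal{P}}_{i_0},\ldots,\hat{\mathcal{P}}_n')$ of $y$ that you would need to compare against has no reason to lie in $\mathscr{V}$. Drop that sentence and your argument is complete.

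For comparison, the paper sidesteps this bookkeeping by computing the \emph{entire} reduced schematic fiber $\mathscr{Z}$ in the product (using \Cref{lemma: schematic fiber for stack with adequate moduli space} for the torus and semistable factors as well) and then \emph{projecting} to the unstable factor rather than slicing. The projection viewpoint makes the passage to closed points slightly cleaner, but the content is the same.
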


\begin{proof}
Let $d\in\pi_1(G)$.
Consider the $\Bun_{\mu}$-torsor 
\[
    \varphi_L:\Bun^0_{T}\times \prod_{i=1}^n \Bun_{\hat G_i,=L}\to \Bun^d_G,
\]
where $\mu$ is a finite multiplicative group,
see \Cref{equation: twisted torsor}.

Let $\mathscr{U}\subseteq \Bun^d_G$ be an open quasi-compact substack containing
an unstable bundle $\mathcal{P}$. As semistability is an open condition
and quasi-compact algebraic stacks of finite type admit closed $k$-points,
we can assume $\mathcal{P}$ to be a closed $k$-point.
Note that $\varphi_L^{-1}(\mathscr{U})\to \mathscr{U}$ is
a $\Bun_{\mu}$-torsor and $\varphi_L^{-1}(\mathscr{U})$ is a quasi-compact open
substack containing a closed $k$-point
point $(\mathcal{M}, \hat{\mathcal{P}}_1,\dots, \hat{\mathcal{P}}_n)$ 
mapping to $\mathcal{P}$ with at least one of the $\hat{\mathcal{P}}_i$ being unstable.

We compute the schematic fiber of such a closed $k$-point 
$(\mathcal{M}, \hat{\mathcal{P}}_1,\dots, \hat{\mathcal{P}}_n)$,
where we order the product in such a way that 
$\hat{\mathcal{P}}_1,\dots,\hat{\mathcal{P}}_m$ are unstable and $\hat{\mathcal{P}}_{m+1},\dots, \hat{\mathcal{P}}_n$
are semistable.
As the schematic fiber commutes with products (if we equip
them with the reduced substack structure), see \Cref{prop: schematic fiber and products}, 
we have
\begin{align*}
    \mathscr{Z}:&=\bigg{(}\Fib_{(\mathcal{M}, \hat{\mathcal{P}}_1,\dots, \hat{\mathcal{P}}_n)}
    (\Bun_T \times \prod_{i=1}^n\Bun_{\hat G_i,=L}) \bigg{)}_{\red} \\
    &=\rB\mathbb{G}_m^{\oplus \rk(T)} \times \prod_{i=1}^m\Bun_{\hat G_i,=L}
    \times  \prod_{i=m+1}^n \pi_{i,=L}^{-1}([\hat{\mathcal{P}}_i])_{\red} 
\end{align*}
by \Cref{lemma: schematic fiber for stack with adequate moduli space}
and \Cref{lemma:app-Bun},
where $\pi_{i,=L}:\Bun^{ss}_{\hat G_i,=L}\to M^{ss}_{\hat G_i,L}$
denotes the adequate moduli space and $[\hat{\mathcal{P}}_i]$ the 
image of $\hat{\mathcal{P}}_i$ under $\pi_{i,=L}$. 
By applying \Cref{lemma: infinitely many closed points}
to the quasi-compact open $\pr_1(\mathscr{Z}\cap \varphi_L^{-1}(\mathscr{U})) =
\pr_1(\varphi_L^{-1}(\mathscr{U}))\subseteq \Bun_{\hat G_1,=L}$
we see that the schematic fiber $\mathscr{Z}\cap \varphi_L^{-1}(\mathscr{U})$
contains infinitely many closed points.

Note that restricting $\varphi_L$ yields a morphism on the level of schematic fibers
$\Fib_{(\mathcal{M}, \hat{\mathcal{P}}_1,\dots, \hat{\mathcal{P}}_n)}(\varphi_L^{-1}(\mathscr{U}))
 \to \Fib_{\mathcal{P}}(\mathscr{U})$. 
As $\varphi_L:\varphi_L^{-1}(\mathscr{U})\to\mathscr{U}$ 
has finite fibers and maps closed points to closed points, 
see \Cref{lemma: Bun_mu torsor finite fibers and universally closed}, 
we find that the schematic fiber
$\Fib_{\mathcal{P}}(\mathscr{U})$ at the unstable bundle $\mathcal{P}$ contains
infinitely many closed points. If $\mathscr{U}$ admits a schematic weak
topological moduli space $\pi: \mathscr{U} \to U$, then replacing $\mathscr{U}$
by the preimage of an affine open neighborhood of $\pi(\mathcal{P})$ we may
assume that $U$ is affine, in particular separated. 
This contradicts \Cref{prop: schematic points for adequate moduli spaces}.
\end{proof}

\subsection{Weak topological moduli spaces of principal bundles}

We need the following
analog of \cite[Defn. 2.5]{edidin-rydh-canonical} in the setting of weak topological moduli spaces.
\begin{definition}
Let $\pi: \mathscr{X} \to X$ be a weak topological moduli space. 
A point $x$ of $\mathscr{X}$ is \emph{stable} relative to $\pi$ if
$|\pi^{-1}(\pi(x))|=\{x\}$ under the induced map of topological spaces
$|\mathscr{X}| \to |X|$. We say $\pi$ is a \emph{stable} weak topological
moduli space if there is an open dense set of stable  points in $\mathcal{X}$.
\end{definition}
\begin{example}
If $\pi: \mathscr{X} \to X$ is an adequate moduli space, then the stable points of $\mathscr{X}$ are exactly those points that are not identified with other points of $\mathscr{X}$ when passing to the adequate moduli space $X$ \cite[Thm. 5.3.1 (5)]{ams}. In particular, the adequate moduli space $\mathscr{B}un_G^{ss} \to M_G^{ss}$ is stable when the genus $g_C \geq 2$, 
while the good moduli space 
$\Theta=[\mathbb{A}^1/\mathbb{G}_m] \to \mathrm{Spec}(k)$ is not.
\end{example}

\begin{lemma} 
\label{lemma: open substacks of adequate moduli spaces}
    Let $\mathscr{X}$ be a locally factorial irreducible quasi-separated stack
    of finite type over $k$ that admits a stable schematic weak topological
    moduli space $\pi: \mathscr{X} \to X$. Suppose that every effective Cartier
    divisor on $\mathscr{X}$ is, up to a power, the pullback of an effective
    Cartier divisor on $X$. 

    Let $\mathscr{U} \subseteq \mathscr{X}$ be an open substack that admits a
    separated weak topological moduli space $\eta: \mathscr{U} \to U$. Let $f: U \to X$
    denote the induced morphism. 
    Then the schematic locus of $U$ coincides with the $f$-saturated open where
    $f$ is an isomorphism onto its image.
\end{lemma}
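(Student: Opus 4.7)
The plan is to establish the equality by proving two inclusions. For the direction ``$f$-saturated iso open $\subseteq U^{sch}$,'' suppose $V \subseteq U$ is $f$-saturated open with $f|_V : V \to f(V)$ an isomorphism. Then $f(V) \subseteq X$ is an open subspace of the scheme $X$, hence itself a scheme, so $V \cong f(V)$ is a scheme and $V \subseteq U^{sch}$.

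For the reverse inclusion, let $u \in U^{sch}$ and let $x_u \in \eta^{-1}(u)$ be the unique closed point of the fiber. By \Cref{lemma: schematic fiber for stack with adequate moduli space} combined with \Cref{prop: characterization of algebraic spaces}, the hypothesis $u \in U^{sch}$ is equivalent to $\Fib_{x_u}(\mathscr{U}) = \eta^{-1}(u)$. To extract geometric content I would compute the schematic fiber using the base-locus description \Cref{prop:schematic fiber as base locus}: by local factoriality of $\mathscr{X}$, every element of $\mathrm{Weil}_{x_u}(\mathscr{U})$ extends by closure to an effective Cartier divisor on $\mathscr{X}$, which by the Cartier divisor hypothesis is, up to a positive power, of the form $\pi^*E$ for an effective Cartier divisor $E$ on $X$ through $f(u)$. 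Conversely, any principal effective Cartier divisor on $X$ through $f(u)$ pulls back via $f\circ\eta$ to a member of $\mathrm{Weil}_{x_u}(\mathscr{U})$. Intersecting supports yields, locally around $x_u$,
\[
\Fib_{x_u}(\mathscr{U}) \;=\; \mathrm{L}_{x_u}\bigl(\mathscr{U}\cap\pi^{-1}(f(u))\bigr).
\]

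The main obstacle is to upgrade the identification $\mathrm{L}_{x_u}(\mathscr{U}\cap\pi^{-1}(f(u)))=\eta^{-1}(u)$ to the stronger $\mathscr{U}\cap\pi^{-1}(f(u)) = \eta^{-1}(u)$, equivalently $f^{-1}(f(u))=\{u\}$ together with $\pi^{-1}(f(u))\subseteq\mathscr{U}$. A second closed point $u' \neq u$ in $f^{-1}(f(u))$ would yield a closed point $x_{u'}\in\mathscr{U}$ sharing its closed specialization $x_u^c$ in $\mathscr{X}$ with $x_u$, with $x_u^c$ forced outside $\mathscr{U}$. I expect to rule such a configuration out by combining the stability of $\pi$ --- which via the universal property of weak topological moduli spaces already forces $f$ to be an isomorphism over the dense open stable locus $X^{st}\cap f(U)$ --- with the separatedness of $U$, propagating the injectivity of $f$ from the stable locus through a valuative argument using universal closedness of $\pi$.

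Once the upgrade is in hand, I would set $V := f^{-1}\bigl(X \setminus \pi(\mathscr{X} \setminus \eta^{-1}(U^{sch}))\bigr)$. Universal closedness of $\pi$ makes $V$ open in $U$; by construction it is $f$-saturated and contained in $U^{sch}$, and every $u \in U^{sch}$ lies in $V$ because $\pi^{-1}(f(u)) \subseteq \eta^{-1}(U^{sch})$ by the previous step. Finally, on $V$ the open substack $\eta^{-1}(V)=\pi^{-1}(f(V))\cap\mathscr{U}$ admits both $V$ and $f(V)\subseteq X$ as separated weak topological moduli spaces, so the universal property yields the isomorphism $f|_V \colon V\xrightarrow{\sim} f(V)$, completing the proof.
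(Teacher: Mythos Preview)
Your computation of the schematic fiber via $\mathrm{Weil}_{x_u}$ is essentially the paper's \textbf{Claim}, and the easy inclusion is the same. The gap is exactly where you flag it: the ``upgrade'' from $\mathrm{L}_{x_u}(\mathscr{U}\cap\pi^{-1}(f(u)))=\eta^{-1}(u)$ to $f^{-1}(f(u))=\{u\}$ is not proved. Saying you ``expect to rule out'' a second point $u'$ by ``propagating injectivity from the stable locus through a valuative argument'' is a hope, not an argument; nothing in the hypotheses prevents two distinct closed points of $f^{-1}(f(u))$ whose closed specializations in $\mathscr{X}$ lie outside $\mathscr{U}$. Worse, the second half of your upgrade, $\pi^{-1}(f(u))\subseteq\mathscr{U}$, is strictly stronger than the lemma's conclusion and need not hold --- the lemma makes no claim about $\pi$-saturation of $\mathscr{U}$ --- so your construction of $V$ via $\pi(\mathscr{X}\setminus\eta^{-1}(U^{sch}))$ is aiming at the wrong target.

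The paper avoids all of this with Zariski's Main Theorem. By stability of $\pi$, the morphism $f:U\to X$ is separated and birational, and $X$ is normal; ZMT then identifies the $f$-saturated locus where $f$ is an isomorphism onto its image with the locus where $f$ has finite fibers. So one only needs the contrapositive: if $f^{-1}(x)$ is \emph{not} finite, then every irreducible component of $f^{-1}(x)$ through $u$ is positive-dimensional (again ZMT), and your own schematic-fiber computation then shows that $|\Fib_{\tilde u}(\mathscr{U})|\supseteq|\eta^{-1}(\mathrm{L}_u(f^{-1}(x)))|$ surjects onto a positive-dimensional subspace of $U$, so $\Fib_u(U)\neq\{u\}$ and $u$ is not schematic. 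No injectivity needs to be propagated, and no $\pi$-saturation of $\mathscr{U}$ is required.
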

\begin{proof}
    Both $U$ and $X$ are irreducible and normal (see \Cref{prop: properties of wtms}). 
    The morphism $f:U \to X$ is a separated
    birational morphism by the
    assumption that $\pi:\mathscr{X} \to X$ is stable weak topological moduli space. 
    Since $X$ is normal, by Zariski's main theorem for algebraic spaces  
    \cite[\href{https://stacks.math.columbia.edu/tag/082K}{Tag 082K}]{sp}
    the open locus where the fibers of $f$ are finite coincides with
    the locus where $f$ is an isomorphism onto its image.
    This locus is clearly contained in the schematic locus of $U$ since
    $X$ is a scheme.

    To see the converse inclusion let $x \in X(k)$ be a $k$-point such that
    $f^{-1}(x)$ is not finite. It remains to prove that every $k$-point in
    $f^{-1}(x)$ is not schematic in $U$. Choose a $k$-point $u \in
    f^{-1}(x)(k)$. By the definition of schematic fiber, we need to show that
    $\Fib_u(U) \neq \{u\}$. To see this, let $\widetilde{u} \in \mathscr{U}(k)$ be
    the unique closed $k$-point lying above $u$, so that
    $\Fib_{\tilde{u}}(\mathscr{U}) = \eta^{-1}(\Fib_u(U))$, 
    see \Cref{lemma: schematic fiber for stack with adequate moduli space}. 
    Our goal for the rest of the proof is to understand $\Fib_{\tilde{u}}(\mathscr{U})$. 
    We need the following:
    
    \noindent \textbf{Claim:} There is an equality of topological spaces $|\Fib_{\tilde{u}}(\mathscr{U})| = |\pi^{-1}(x) \cap \mathscr{U}|$.

    Let $\widetilde{x} \in \mathscr{X}(k)$ be
    the unique closed $k$-point lying above $x$ in $\mathscr{X}$. We shall show
    the following equality of subsets of the topological subspaces of
    $|\mathscr{U}|$:
    \begin{equation} \label{equation: equality topological spaces 1}
        \{ \, |\mathscr{Z} \cap \mathscr{U}| \, \mid \,  \mathscr{Z} \in \mathrm{Weil}_{\tilde{x}} \} = \{ \, |\mathscr{H}| \, \mid \, \mathscr{H} \in \mathrm{Weil}_{\tilde{u}}\},
    \end{equation}
    where the left-hand side $\mathrm{Weil}_{\tilde{x}}$ is taken in $\mathscr{X}$ whereas $\mathrm{Weil}_{\tilde{u}}$ is taken in $\mathscr{U}$.
By the base locus description of the schematic fiber (\Cref{prop:schematic fiber
as base locus}), we see that this implies the desired equality
$|\Fib_{\tilde{u}}(\mathscr{U})| = |\Fib_{\tilde{x}}(\mathscr{X}) \cap
\mathscr{U}| = |\pi^{-1}(x) \cap \mathscr{U}|$, where the last equality follows
the schematicity of $X$ by \Cref{prop: schematic points for adequate moduli
spaces}.

    We prove both inclusions of sets in \Cref{equation: equality topological spaces 1}. 
    Suppose that $\mathscr{Z} \in \mathrm{Weil}_{\tilde{x}}$. 
    Up to passing to a power of the divisor, which does not change the
    underlying topological space, we may assume that $\mathscr{Z}$ is the
    pullback of an effective divisor from $X$. 
    It follows that $\mathscr{Z}$ contains every point in $\mathscr{X}$ 
    that maps to $x$, which includes $\widetilde{u}$. 
    Furthermore, since $\tilde{u}$ is a generalization of $\tilde{x}$ (because it lies in the same fiber $\pi^{-1}(x)$), 
    any trivializing neighborhood of $\tilde{x}$
    for $\mathscr{Z}$ also contains $\tilde{u}$, 
    so $\mathscr{Z}$ is also trivializable around $\tilde{u}$. 
    We conclude that $\mathscr{Z} \cap \mathscr{U}$ 
    satisfies the conditions to be an element of $\mathrm{Weil}_{\tilde{u}}$. 

    Conversely, suppose that $\mathscr{H} \in \mathrm{Weil}_{\tilde{u}}$. Consider the closure $\overline{\mathscr{H}} \subseteq \mathscr{X}$. To conclude we show that, up to passing to a power of the divisor, we have $\overline{\mathscr{H}} \in \mathrm{Weil}_{\tilde{x}}$. Up to passing to a power, we know that $\overline{\mathscr{H}}$ is the pullback of a Cartier divisor on the scheme $X$, so it is Zariski-locally trivializable around any point of $\mathscr{X}$. Furthermore, since $\overline{\mathscr{H}}$ comes from $X$, it contains all the points that map to the same element $f(u)=x$ in $X$. In particular, $\overline{\mathscr{H}}$ contains $\tilde{x}$. Therefore, $\overline{\mathscr{H}}$ satisfies both of the necessary properties to be in $\mathrm{Weil}_{\tilde{x}}$. This concludes the proof of the \textbf{Claim}.

    Now we continue with the proof of the lemma. 
    On the one hand, we have
    \begin{equation} \label{equation: equality of schematic fiber with base locus 1}
        |\Fib_{\tilde{u}}(\mathscr{U})| = 
        \bigg{|}\mathrm{L}_{\tilde{u}}
        \bigg{(}\bigcap_{\mathscr{H} \in \mathrm{Weil}_{\tilde{u}}}
        \mathscr{H}\bigg{)}\bigg{|},
    \end{equation}
    by \Cref{prop:schematic fiber as base locus}.
    On the other hand, by the \textbf{Claim}, we have
    \[\bigg{|}\mathrm{L}_{\tilde{u}}\bigg{(}\bigcap_{\mathscr{H} \in
    \mathrm{Weil}_{\tilde{u}}} \mathscr{H}\bigg{)}\bigg{|} =
    \bigg{|}\mathrm{L}_{\tilde{u}}\bigg{(}\bigg{(}\bigcap_{\mathscr{Z} \in
    \mathrm{Weil}_{\tilde{x}}} \mathscr{Z}\bigg{)} \cap
    \mathscr{U}\bigg{)}\bigg{|} =
    \bigg{|}\mathrm{L}_{\tilde{u}}\bigg{(}\bigcap_{\mathscr{Z} \in
    \mathrm{Weil}_{\tilde{x}}} \mathscr{Z}\bigg{)} \cap \mathscr{U}\bigg{|}, \]
    where the last equality follows from the definition of $|\mathrm{L}_{\tilde{u}}(\bullet)|$ as the union of the irreducible components containing $\widetilde{u}$. Now, since $\widetilde{u}$ is a generalization of $\widetilde{x}$, 
    we see that every irreducible component of  $\bigcap_{\mathscr{Z} \in \mathrm{Weil}_{\tilde{x}}} \mathscr{Z}$ containing $\widetilde{u}$ also contains $\widetilde{x}$. Therefore, we may write
    \begin{equation} \label{equation: equality of two localizations}
        \mathrm{L}_{\tilde{u}}\bigg{(}\bigcap_{\mathscr{Z} \in
        \mathrm{Weil}_{\tilde{x}}} \mathscr{Z}\bigg{)} =
        \mathrm{L}_{\tilde{u}}\bigg{(}\mathrm{L}_{\tilde{x}}\bigg{(}\bigcap_{\mathscr{Z}
        \in \mathrm{Weil}_{\tilde{x}}} \mathscr{Z}\bigg{)}\bigg{)}.
    \end{equation}
    By \Cref{prop:schematic fiber as base locus}, we have $\mathrm{L}_{\tilde{x}}\left(\bigcap_{\mathscr{Z} \in \mathrm{Weil}_{\tilde{x}}} \mathscr{Z}\right) = \Fib_{\tilde{x}}(\mathscr{X})$, and since $x$ is sche\-ma\-tic, this is equal to $\pi^{-1}(x)$ (\Cref{prop: schematic points for adequate moduli spaces}). We conclude, by combining Equation 
    (\ref{equation: equality of schematic fiber with base locus 1}) and \Cref{equation: equality of two localizations}, that we have
    \[ |\Fib_{\tilde{u}}(\mathscr{U})| = \left| \mathrm{L}_{\tilde{u}}\left(\pi^{-1}(x) \cap \mathscr{U}\right) \right|.\]
    Now, note that $\pi^{-1}(x) \cap \mathscr{U}$ consists exactly of the points
    in $\mathscr{U}$ that map to $f^{-1}(x)$, in other words $\pi^{-1}(x) \cap
    \mathscr{U} = \eta^{-1}(f^{-1}(x))$. By Zariski's main theorem, every
    irreducible component of the preimage $f^{-1}(x)$ has positive dimension,
    and therefore $\mathrm{L}_{u}(f^{-1}(x))$ consists of positive dimensional
    irreducible components. Choose one component $Z \subseteq \mathrm{L}_{u}(f^{-1}(x))$. Since $\eta: \eta^{-1}(f^{-1}(x)) \to f^{-1}(x)$
    is surjective and universally closed, there is an irreducible component
    $\mathscr{J} \subseteq \eta^{-1}(f^{-1}(x))$ that surjects onto $Z$. 
    Then we obtain
    \[|\mathscr{J}| \subseteq |\mathrm{L}_{\tilde{u}}\left(\pi^{-1}(x) \cap \mathscr{U}\right)| = |\Fib_{\tilde{u}}(\mathscr{U})|.\]
    Since $\eta(\mathscr{J}) = Z \neq \{u\}$, we conclude $\eta(\Fib_{\tilde{u}}(\mathscr{U})) \neq \{u\}$. 
    This shows that we have $\Fib_u(U) \neq \{u\}$, 
    as $\Fib_{\tilde{u}}(\mathscr{U}) = \eta^{-1}(\Fib_u(U))$,
    i.e., $u$ is not a schematic point of $U$.
\end{proof}

\begin{theorem} \label{thm: open substacks of semistable G-bundles}
    Let $C$ be a smooth projective connected curve over $k$
    of genus at least $2$. Let $G$ be a connected reductive group over $k$. Fix
    $d \in \pi_1(G)$. Let $\mathscr{U} \subseteq \Bun_{G}^{d,ss}$ be an open
    substack of the stack of degree $d$ semistable $G$-bundles. Suppose that
    $\mathscr{U}$ admits a separated weak topological moduli space $\eta:
    \mathscr{U} \to U$. Let $f: U \to M^{d,ss}_G$ be the induced morphism to the
    moduli space of degree $d$ semistable $G$-bundles. 

    Then the schematic locus of $U$ coincides with the $f$-saturated
    open where $f$ is an isomorphism onto its image.
\end{theorem}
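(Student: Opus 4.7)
The strategy is to apply Lemma~\ref{lemma: open substacks of adequate moduli spaces} to $\mathscr{X} = \Bun_G^{d,ss}$ with its adequate moduli space $\pi: \Bun_G^{d,ss} \to M_G^{d,ss}$; the conclusion of that lemma applied to $\mathscr{U}\subseteq \mathscr{X}$ is exactly the statement to be proved.

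Most standing hypotheses of Lemma~\ref{lemma: open substacks of adequate moduli spaces} are immediate. The stack $\Bun_G^{d,ss}$ is smooth (so locally factorial), irreducible (since $\Bun_G^d$ is smooth and connected and the semistable locus is non-empty open for $g_C \geq 2$), quasi-separated, and quasi-compact. The moduli space $M_G^{d,ss}$ is a normal projective variety, in particular a scheme, and $\pi$ is a stable weak topological moduli space because the stable locus $\Bun_G^{d,s}\subseteq \Bun_G^{d,ss}$ is open dense and $\pi$ restricts to a homeomorphism $\Bun_G^{d,s}\to M_G^{d,s}$, so every stable bundle is alone in its $\pi$-fiber.

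The main technical hypothesis to verify is that every effective Cartier divisor on $\Bun_G^{d,ss}$ is, up to a positive power, the pullback of an effective Cartier divisor on $M_G^{d,ss}$. My plan is to establish this via the reduction in Situation~\ref{situation: reduction lemma}. On each twisted factor $\Bun_{\hat G_i,=L}^{ss}$ the Picard group is $\mathbb{Z}$, generated by $\mathcal{L}_{\det,L}$, and the only line bundles admitting non-trivial sections are its positive powers (as recalled in the proof of Lemma~\ref{lemma:app-Bun}). Hence every effective Cartier divisor on this factor is, up to a power, cut out by a section of some positive power of $\mathcal{L}_{\det,L}$, and such sections descend to sections on the projective variety $M_{\hat G_i,L}^{ss}$ via the adequate moduli space morphism of Lemma~\ref{lemma: moduli fixing det agree}, by the universal property of adequate moduli spaces (once a sufficiently large power of $\mathcal{L}_{\det,L}$ descends as a line bundle). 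A similar (easier) analysis for the torus factor $\Bun_T^0\to \Pic^0(C)$ handles the remaining contributions to effective divisors on the product $\mathscr{X}_L^{ss}=\Bun_T^0\times\prod_i\Bun_{\hat G_i,=L}^{ss}$. Finite faithful flatness of the $\Bun_\mu$-torsor $\varphi_L:\mathscr{X}_L^{ss}\to \Bun_G^{d,ss}$ (Lemma~\ref{lemma: Bun_mu torsor finite fibers and universally closed}) allows transporting the divisor descent property from $\mathscr{X}_L^{ss}$ back to $\Bun_G^{d,ss}$, after passing to further positive powers to enforce $\mu$-invariance and to match the moduli space of $\mathscr{X}_L^{ss}$ with $M_G^{d,ss}$ along the finite morphism provided by the same lemma.

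The main obstacle is precisely this divisor descent analysis: each individual Picard-group statement on the factors of $\mathscr{X}_L^{ss}$ is explicit, but combining contributions from distinct factors and tracking the necessary powers across the finite cover $\varphi_L$ requires some bookkeeping to land back at effective Cartier divisors on $M_G^{d,ss}$ itself. Once the divisor hypothesis is established, Lemma~\ref{lemma: open substacks of adequate moduli spaces} applied to $\mathscr{U}\subseteq \Bun_G^{d,ss}$ with its separated weak topological moduli space $\eta:\mathscr{U}\to U$ immediately identifies the schematic locus of $U$ with the $f$-saturated open where $f:U\to M_G^{d,ss}$ is an isomorphism onto its image, which is the theorem.
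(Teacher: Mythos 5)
Your overall strategy is exactly the paper's: verify the hypotheses of Lemma~\ref{lemma: open substacks of adequate moduli spaces} for $\pi\colon \Bun_G^{d,ss}\to M_G^{d,ss}$ and then apply it to $\mathscr{U}\subseteq \Bun_G^{d,ss}$. Your verification of the easy hypotheses (smoothness, irreducibility, stability of $\pi$ via non-emptiness of the stable locus for $g_C\geq 2$) matches the paper. Where you diverge is the key technical hypothesis that every effective Cartier divisor on $\Bun_G^{d,ss}$ is, up to a power, pulled back from $M_G^{d,ss}$. The paper does \emph{not} use the isogeny cover $\varphi_L$ of Situation~\ref{situation: reduction lemma} here; instead it first identifies $\pic(\Bun_G^{d,ss})$ with $\pic(\Bun_G^d)$ (codimension $\geq 2$ of the unstable locus) and then uses the Biswas--Hoffmann decomposition $\cL^{\otimes N}=\theta^*(\cL_{ab}\otimes\cL_\rB)\otimes\bigotimes_i\cL_i^{\otimes n_i}$ along the quotient maps $G\to G_i$ (simple adjoint factors) and $G\to G^{ab}$, descending each $\cL_i$ via the ample line bundle on the projective variety $M_{G_i}^{d_i,ss}$ together with $\pic(\Bun_{G_i}^{d_i})\otimes\mathbb{Q}=\mathbb{Q}$, and descending $\theta^*(\cL_{ab})$ through $M_G^{d,ss}\to M_{G^{ab}}^{\theta(d)}$. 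Going along quotients of $G$ rather than covers avoids any descent along $\varphi_L$ altogether.

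Two points in your route are genuine gaps rather than bookkeeping. First, having descended the pulled-back divisor to the moduli space $X'_L$ of $\mathscr{X}_L^{ss}$, you still need to get down to $M_G^{d,ss}$ along the finite surjective morphism $X'_L\to M_G^{d,ss}$; line bundles and Cartier divisors do not descend along finite surjective morphisms of normal varieties in general ($M_G^{d,ss}$ is normal but not locally factorial), so you would need either a norm argument or, better, to reformulate the whole step as checking the stabilizer-character criterion for $\pi$ directly on $\Bun_G^{d,ss}$, using that the stabilizers in the cover have finite-index image so that a further power kills the relevant character. Second, the torus factor is not the easy part: $\Bun_T^0$ carries a residual $\rB\mathbb{G}_m^{r_Z}$-gerbe, and the corresponding characters of the central torus contribute a summand $\cL_\rB$ of the Picard group that does \emph{not} descend to any moduli space. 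The paper needs a separate argument (the central torus $Z_G^{\circ}$ acts on the fiber of $\cL^{\otimes N}$ at every point by the character determined by $\cL_\rB$, so a non-trivial global section forces $\cL_\rB$ to be trivial) to dispose of it; your proposal does not address this contribution at all, and without it the divisor-descent hypothesis is simply false for general line bundles, so the restriction to line bundles admitting sections must enter exactly here.
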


\begin{proof}
It suffices to show that the adequate moduli space $\pi: \Bun_G^{d,ss} \to
M_G^{d,ss}$ satisfies the hypotheses of \Cref{lemma: open substacks of adequate
moduli spaces}. First, since $g_C \geq 2$, the moduli
space $\pi$ is stable, because the stable locus is non-empty \cite[Prop. 3.25]{holla2002parabolicreductionsprincipalbundles}. 
Furthermore, the stack $\Bun_G$ is smooth.
It remains to show that effective Cartier divisors on
$\Bun_G^{d,ss}$ are, up to a power, pulled back from the moduli space $M^{d,ss}_G$. 
Since
$\pi_*(\cO_{\Bun_{G}^{d,ss}}) = \cO_{M^{d,ss}_G}$, by the projection formula it
suffices to show that if a line bundle $\cL$ on $\Bun_G^{d,ss}$ has a non-trivial
global section, then there is a power $\cL^{\otimes n}$ with $n>0$ that is
pulled back from $M^{d,ss}_G$.

To show this, note that the codimension of the complement $\Bun_G^{d} \setminus \Bun_G^{d,ss}$ of the semistable locus is at least $2$. Indeed, this is a 
known fact that can be seen via
the Harder-Narasimhan stratification of $\Bun_G^{d}$ and
calculating the codimension of the strata (see \cite[(10.7)]{Yang-Mills} or
\cite[Sect. 3]{Laumon-Rapoport-codim}). Hence the Picard group of
$\Bun_G^{d,ss}$ is the same as the Picard group of $\Bun_{G}^{d}$. The latter
has been described in \cite{biswas-hoffmann}. Let us now introduce some
necessary setup and notation in order to explain some of the relevant results
from \cite{biswas-hoffmann} that we require.

Let $\theta: G \twoheadrightarrow G^{ab}:= G/[G,G]$ 
denote the abelianization of $G$. 
We have a morphism of stacks 
$\theta: \Bun_{G}^{d} \to \Bun_{G^{ab}}^{\theta(d)}$, 
where $\theta(d)$ is the image of $d$ under the
morphism $\theta: \pi_1(G) \to \pi_1(G^{ab})$.
Recall that $G^{ab}$ is a torus isomorphic to $\mathbb{G}_m^{r_Z}$ for some
integer $r_Z \geq 0$, and the induced morphism from the center $Z_G \to G^{ab}$
is an isogeny. In particular, the stack $\Bun_{G^{ab}}^{\theta(d)}$ is a product
of Picard stacks, which has a good moduli space $M_{G^{ab}}^{\theta(d)}$
admitting a morphism $M^{d,ss}_{G} \to M_{G^{ab}}^{\theta(d)}$. 
A choice of base-point $p \in C(k)$ induces an identification 
$\Bun_{G^{ab}}^{\theta(d)} = M_{G^{ab}}^{\theta(d)} \times \rB\mathbb{G}^{r_Z}_m$. 

The adjoint group of $G$ is of the form $\prod_{i \in I} G_i$, 
where $G_i$ is a simple adjoint group. 
Denote by $q_i: G \to G_i$ the corresponding quotient morphism, 
and let $q_i(d)$ be the image of $d$ under the induced map $q_i: \pi_1(G) \to \pi_1(G_i)$. 
Then we have an induced morphism of stacks $\Bun_G^{d} \to \Bun_{G_i}^{d_i}$. 
Let us denote by $\cL_i$ the pullback to $\Bun_G^d$ of the determinant of cohomology line bundle 
$\cL_{\det,i}$ on $\Bun_{G_i}^{d_i}$ (as defined in \Cref{subsubsection: determinant of cohomology}). 
We claim that each $\cL_i$ descends to the adequate moduli space $M_G^{d,ss}$. 
To see this, first note that the morphism $\Bun_G^{d} \to \Bun_{G_i}^{d_i}$ 
is readily seen to be compatible with semistability. 
Therefore, it restricts to a morphism $\Bun_{G}^{d,ss} \to \Bun_{G_i}^{d_i,ss}$. 
By the universal property of adequate moduli spaces, 
we get a commutative diagram
\[
     \begin{tikzcd}
        \Bun_{G}^{d,ss} \ar[d] \ar[r] & \Bun_{G_i}^{d_i, ss} \ar[d]\\
         M_G^{d,ss} \ar[r] & M_{G_i}^{d_i,ss}.
     \end{tikzcd}
\]
Hence, in order to show that $\cL_i$ on $\Bun_{G}^{d,ss}$ descends to
$M_G^{d,ss}$ up to a power, it suffices to prove that $\cL_{\det,i}$ on
$\Bun_{G_i}^{d_i,ss}$ descends to $M_{G_i}^{d_i,ss}$ up to a power.

Since $M_{G_i}^{d_i,ss}$ is a projective variety of positive dimension 
(see \Cref{subsubsection: stability}),
there is a non-trivial ample line bundle $\cM_i$ on $M_{G_i}^{d_i,ss}$. 
The pullback $\pi_i^*(\cM_i)$ via the adequate moduli space morphism
$\pi_i: \Bun_{G_i}^{d_i,ss} \to M_{G_i}^{d_i,ss}$ is non-trivial by the projection formula. 
Combining some of the results in \cite{biswas-hoffmann}, we see that
$\pic(\Bun_{G_i}^{d_i,ss}) \otimes \mathbb{Q} = \pic(\Bun_{G_i}^{d_i}) \otimes
\mathbb{Q} = \mathbb{Q}$ 
(more precisely, using \cite[Thm. 5.3.1 (ii)]{biswas-hoffmann} we see that
$\pic(\Bun_{G_i}^{d_i}) \otimes \mathbb{Q} \cong 
\mathrm{NS}(\Bun_{G_i}^{d_i}) \otimes \mathbb{Q}$, 
and using \cite[Defn. 5.2.1 + Defn. 4.3.2 (i)]{biswas-hoffmann} we see that
$\mathrm{NS}(\Bun_{G_i}^{d_i}) \otimes \mathbb{Q} \cong \mathbb{Q}$).
This implies that every line bundle in 
$\pic(\Bun_{G_i}^{d_i,ss}) = \pic(\Bun_{G_i}^{d_i})$ is, 
up to a power, isomorphic to $\cL_{\det,i}$. 
We conclude that $\pi_i^*(\cM_i)^{\otimes n} \cong
\cL_{\det,i}^{\otimes m}$ for some non-zero integers $n,m$, and hence
$\cL_{\det,i}$ descends to the adequate moduli space up to a power, as desired.
This concludes the proof that $\cL_i$ descends, up to a power, to $M_G^{d,ss}$.

Now, it follows from \cite[Thm. 5.3.1 (ii)]{biswas-hoffmann}
that for every line bundle $\cL$ on $\Bun_G^d$, there exists some integer $N>0,$ such that we may write
\[ 
\cL^{\otimes N} = 
\theta^*(\cL_{ab} \otimes \cL_\rB) \otimes \bigotimes_{i \in I} \cL_i^{\otimes n_i},
\]
where $\cL_{ab} \in \pic(M_{G^{ab}}^d), \cL_\rB \in \pic(\rB\mathbb{G}_m^{r_Z}),$ and
$n_i\in\mathbb{Z}$.
To conclude the proof, 
we must show that if $\cL$ admits a non-trivial section, 
then $\cL^{\otimes N}$ descends to $M_{G}^{d,ss}$. 
In view of our discussion above, the line bundles 
$\theta^*(\cL_{ab})$ and $\cL_i^{\otimes n_i}$ 
descend to the adequate moduli space $M_G^{d,ss}$ up to a power. 
Hence, it suffices to show that the existence of a non-trivial section on $\cL^{\otimes N}$ 
implies that $\cL_\rB$ is trivial.

If $r_Z =0$, there is nothing to prove. 
If $r_Z>0$, then note that there is 
a non-trivial maximal central torus $Z_G^{\circ} \subseteq G$, 
and there is a canonical copy of $Z_G^{\circ}$ inside the automorphism group of
every geometric point $x \in |\Bun_G^d|$. 
On the one hand, it follows from the
definition of the line bundles that $Z_G^{\circ}$ acts trivially on the
$x$-fibers of the line bundles $\cL_{ab}$ and $\cL_i$ for any $i \in I$. 
On the other hand, if $\cL_\rB$ is non-trivial, 
then the isogeny $Z_G^{\circ} \to G^{ab}$ shows that $Z_{G^{ab}}^{\circ}$ 
acts non-trivially on $\cL_\rB$. 
In particular, if $\cL_\rB$ were non-trivial, 
then the subgroup of inertia $Z_G^{\circ}$ would act non-trivially on the fiber of
$\cL^{\otimes N}$ at every geometric point of $\Bun_G^d$, thus precluding the
existence of a non-trivial section of $\cL^{\otimes N}$, as desired.
\end{proof}

As the schematic locus of a quasi-separated algebraic space is a dense open subspace \cite[\href{https://stacks.math.columbia.edu/tag/086U}{Tag 086U}]{sp} we obtain
-- as an immediate corollary of \Cref{thm: semistable and schematic}
and \Cref{thm: open substacks of semistable G-bundles} --
that all moduli spaces of principal $G$-bundles are birational.
In particular, they have the same dimension.

\begin{corollary}
    \label{cor: moduli of principal bundles are birational}
    Let $C$ be a smooth projective connected curve over $k$ of genus at least $2$. 
    Let $G$ be a connected reductive group over $k$ and $d\in\pi_1(G)$.
    Furthermore, let $\mathscr{U}\subseteq \mathscr{B}un_G^d$ be a non-empty
    open substack
    admitting a weak topological moduli space $\mathscr{U} \to U$.
    Then $U$ is birational to the moduli space $M^{d,ss}_G$ 
    of semistable principal $G$-bundles of degree $d$ over $C$.
In particular, $\dim(U)=\dim(M^{d,ss}_G)$. 
\end{corollary}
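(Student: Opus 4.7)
The plan is to combine the two main results of this section, Theorem \ref{thm: semistable and schematic} and Theorem \ref{thm: open substacks of semistable G-bundles}, with the general fact that the schematic locus of a quasi-separated algebraic space is a dense open subspace. This is the argument hinted at in the paragraph preceding the corollary.

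First I would argue that $\mathscr{U}$ and $U$ are irreducible. The stack $\mathscr{B}un_G$ is smooth by \cite[Prop. 4.1]{hoffmann-moduli-stacks}, so each of its connected components $\mathscr{B}un_G^d$ is irreducible, and hence so is the non-empty open substack $\mathscr{U}$. By \Cref{prop: properties of wtms} it then follows that $U$ is irreducible as well. Next I would let $V \subseteq U$ be the schematic locus, which is dense and open in $U$, and set $\mathscr{V} := \eta^{-1}(V) \subseteq \mathscr{U}$. Since $V \hookrightarrow U$ is flat, \Cref{prop: properties of wtms}(a) ensures that the restriction $\mathscr{V} \to V$ is still a weak topological moduli space, and it is schematic by construction.

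With this set-up, \Cref{thm: semistable and schematic} applied to $\mathscr{V} \subseteq \mathscr{B}un_G^d$ forces $\mathscr{V} \subseteq \mathscr{B}un_G^{d, ss}$. I can now apply \Cref{thm: open substacks of semistable G-bundles} to the open substack $\mathscr{V}$: it yields a morphism $f\colon V \to M_G^{d,ss}$ whose $f$-saturated open of isomorphism onto the image coincides with the schematic locus of $V$. Since $V$ is already a scheme, the schematic locus of $V$ is all of $V$, so $f$ must be an open immersion.

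Finally, because $M_G^{d,ss}$ is an irreducible variety (as the adequate moduli space of the irreducible stack $\mathscr{B}un_G^{d,ss}$, see \Cref{subsubsection: stability}) and $V$ is a dense open subspace of $U$, the open immersion $V \hookrightarrow M_G^{d,ss}$ exhibits $U$ and $M_G^{d,ss}$ as birational, giving the dimension equality $\dim(U) = \dim(V) = \dim(M_G^{d,ss})$. There is no real obstacle here: once one has both theorems in hand the result drops out, and the only minor check is that the WTMS property is preserved under restriction to $\eta^{-1}(V)$, which is immediate from \Cref{prop: properties of wtms}(a).
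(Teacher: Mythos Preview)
Your approach is the same as the paper's, but there is one small gap: \Cref{thm: open substacks of semistable G-bundles} requires the weak topological moduli space to be \emph{separated}, and the schematic locus $V\subseteq U$ is a scheme but need not be separated. The paper handles this by first replacing $V$ by a non-empty affine open subscheme (which is automatically separated) before invoking the two theorems; you should insert the same shrinking step. Once you do, the rest of your argument goes through exactly as written and matches the paper's proof.
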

\begin{proof}
    Let $U_{sch}\subseteq U$ be the schematic locus of the algebraic space $U$, which is a dense open subspace. Replacing the scheme $U_{sch}$
    by an affine open subscheme we may assume that $U_{sch}$ is separated.
    Then the pre-image $\mathscr{U}_{sch}\subseteq \mathscr{U}$ of $U_{sch} \subseteq U$
    admits $U_{sch}$ as its weak topological moduli space.
    As $U_{sch}$ is a scheme we can first apply \Cref{thm: semistable and schematic} to obtain that $\mathscr{U}_{sch}\subseteq \mathscr{B}un^{d,ss}_G$.
    Then applying \Cref{thm: open substacks of semistable G-bundles} concludes
    that the induced morphism $U_{sch}\to M^{d,ss}_G$ is an open immersion.
\end{proof}

\bibliographystyle{alpha}

\bibliography{bibliography}

\end{document}